\theoremstyle{theorem}
\newtheorem*{thmnonumber}{Theorem}
\newtheorem*{thmA}{Theorem A}
\newtheorem*{thmB}{Theorem B}
\newcommand{\rcomcat}[2]{\Pi_0({#1},{#2})}
\newcommand{\rcomcatf}[2]{\Pi_0^f({#1},{#2})}
\newcommand{\comcat}[1]{\Pi_0({#1})}
\newcommand{\comcatf}[1]{\Pi_0^f({#1})}
\newcommand{\rcomsp}[2]{\overline{{#1}|{#2}}}
\newcommand{\rcomspm}[1]{\overline{{#1}}}
\newcommand{\rfuncat}[2]{\Pi_1({#1},{#2})}
\newcommand{\rfuncatf}[2]{\Pi_1^f({#1},{#2})}
\newcommand{\funcatf}[1]{\Pi_1^f({#1})}
\newcommand{\relidx}[3]{i_{{#2}|{#3}}({#1})}
\newcommand{\rlnumb}[3]{L_{{#2}|{#3}}({#1})}
\newcommand{\alnumb}[2]{L_{#2}({#1})}
\newcommand{\runicov}[2]{\widetilde{{#1}|{#2}}}
\newcommand{\rrgeo}[3]{\,^{ge}\hspace{-.6ex}R_{{#2}|{#3}}({#1})}
\newcommand{\rralg}[3]{\,^{gl}\hspace{-.7ex}R_{{#2}|{#3}}({#1})}
\newcommand{\ralg}[1]{\,^{gl}\hspace{-.7ex}R({#1})}
\newcommand{\rrkw}[3]{\,^{KW}\hspace{-.7ex}R_{{#2}|{#3}}({#1})}
\newcommand{\rrhtpy}[3]{\,^l\hspace{-.7ex}R_{{#2}|{#3}}({#1})}
\begin{document}

\title{Relative Fixed Point Theory}
\author{Kate Ponto}
\date{\today\\
The author was supported by a National Science Foundation postdoctoral fellowship.}
\maketitle

\section*{Abstract}

The Lefschetz fixed point theorem and its converse have many generalizations.
One of these generalizations is to endomorphisms of a space relative to a fixed 
subspace.  
In this paper we define relative Lefschetz numbers and 
Reidemeister traces using traces in bicategories with shadows.  
We use the functoriality of this trace to identify different forms 
of these invariants and to prove a relative Lefschetz fixed point 
theorem and its converse.

\section*{Introduction}

The goal of topological fixed point theory is to find invariants that
detect if a given endomorphism of a space has any fixed points.  The Lefschetz fixed
point theorem identifies one such invariant.

\begin{thmnonumber}[Lefschetz fixed point theorem] Let $B$ be a closed smooth manifold
and \[f\colon B\rightarrow B\] be a continuous map.  If $f$ has no fixed points then the 
Lefschetz number of $f$ is zero.
\end{thmnonumber}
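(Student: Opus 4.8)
The plan is to exhibit $L(f)$ as a categorical trace, use that traces are preserved by symmetric monoidal functors, and observe that the relevant trace vanishes when there is nothing to ``collapse onto.'' First I would recall that a closed smooth manifold $B$ is strongly dualizable in the stable homotopy category: by Atiyah duality the dual of $\Sigma^\infty B_+$ is the Thom spectrum $B^{-TB}$, with coevaluation and evaluation built from a Pontryagin--Thom collapse of a tubular neighborhood of the diagonal $\Delta\subset B\times B$. Hence the self-map $\Sigma^\infty f_+$ has a well-defined trace $\operatorname{tr}(\Sigma^\infty f_+)\in\pi_0\mathbb{S}\cong\mathbb{Z}$.

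Next I would compare this trace with $L(f)$. Rational homology $H_*(-;\mathbb{Q})$ is symmetric monoidal, its structure map being the K\"unneth isomorphism, so it carries dualizable objects to dualizable objects and traces to traces. Computing $\operatorname{tr}$ of the image of $\Sigma^\infty f_+$ in the category of graded $\mathbb{Q}$-vector spaces --- where the Koszul sign rule produces the alternating sum --- gives, via the Hopf trace formula, exactly $L(f)=\sum_i(-1)^i\operatorname{tr}\bigl(f_*\colon H_i(B;\mathbb{Q})\to H_i(B;\mathbb{Q})\bigr)$. Thus $\operatorname{tr}(\Sigma^\infty f_+)=L(f)$ as integers.

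Finally I would compute $\operatorname{tr}(\Sigma^\infty f_+)$ geometrically. Feeding $\Sigma^\infty f_+$ through the Atiyah duality data, the trace is represented by a Pontryagin--Thom collapse whose support is a neighborhood of the coincidence set $\{x\in B: f(x)=x\}=\operatorname{Fix}(f)$; this integer is the classical fixed point index $i(f)$. If $f$ has no fixed points then compactness of $B$ gives $\delta>0$ with $d(x,f(x))\ge\delta$ for all $x$, so the graph $\Gamma_f$ is disjoint from $\Delta$ and the collapse map defining the trace factors through a point, whence $i(f)=0$ and therefore $L(f)=0$.

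The main obstacle is the geometric identification in the last step: unwinding the explicit Atiyah duality maps and the Pontryagin--Thom construction far enough to see that the categorical trace is \emph{literally} the fixed point index --- in particular that its support is $\operatorname{Fix}(f)$ --- together with the bookkeeping of Koszul signs that turns the single integer $\operatorname{tr}(\Sigma^\infty f_+)$ into the alternating sum defining $L(f)$. A more elementary route, avoiding spectra, is to triangulate $B$, replace $f$ by a homotopic simplicial self-map $g$ of a subdivision fine enough that $g(\sigma)\cap\sigma=\emptyset$ for every simplex (using the same $\delta$), and apply the Hopf trace formula directly: every diagonal entry of the induced chain endomorphism is $0$, so $L(f)=L(g)=0$.
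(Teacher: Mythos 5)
Your proof is correct and takes essentially the same route the paper sketches at the end of \autoref{symmetricmonoidal}: realize $B_+$ as a dualizable object in the stable homotopy category so the trace of $f$ is the fixed point index, use that rational homology is strong symmetric monoidal together with \autoref{symtracefunctor} to identify that index with $L(f)$, and observe the index vanishes when $\operatorname{Fix}(f)=\emptyset$. The closing simplicial-approximation alternative is a genuine extra, but the primary argument coincides with the paper's.
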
 

The Lefschetz number is the alternating sum of the levelwise traces of the map
induced by $f$ on the rational homology of $B$.  This is a relatively computable 
invariant.  It gives a necessary, but not sufficient, condition for determining if a 
continuous map does not have any fixed points.

If we put additional restrictions on the map $f$, such as requiring it to 
preserve a subspace of $B$, the Lefschetz number still gives a necessary condition
for $f$ to be fixed point free.  However, this invariant ignores the relative structure and
so is not the best possible invariant.  For example, the identity map of the circle is
homotopic to a map with no fixed points and so the Lefschetz number is zero.  
If this map is required to preserve a proper subinterval it is no longer homotopic 
to a map with no fixed points.

There is a refinement of the Lefschetz number defined 
using the induced maps on the rational homology of the subspace and the relative 
rational homology.

\begin{thmA}[Relative Lefschetz fixed point theorem]\label{thma} Let $A\subset B$ be closed
smooth manifolds and \[f\colon B\rightarrow B\] be a continuous map such that $f(A)\subset A$.
If $f$ has no fixed points then the relative Lefschetz number of $f$ is zero.
\end{thmA}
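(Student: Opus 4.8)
The plan is to read Theorem~A off from the interaction of the relative Lefschetz number with the two elementary operations available here: restricting an endomorphism of the pair to $A$, and passing to the long exact homology sequence of $(B,A)$. The first point is that the hypothesis is really a hypothesis about two self-maps at once. Because $f(A)\subset A$, a fixed point of $f|_A$ in $A$ would be a fixed point of $f$ in $B$, so if $f$ has no fixed points then neither does $f|_A\colon A\to A$; and $A$ is again a closed smooth manifold, so the classical Lefschetz fixed point theorem quoted above applies to $f$ and to $f|_A$ simultaneously, yielding $L(f)=0$ and $L(f|_A)=0$.

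Next I would make precise the sense in which the relative Lefschetz number is assembled from the traces of $f_*$ on $H_*(A;\mathbb{Q})$ and on $H_*(B,A;\mathbb{Q})$, and record the single linear relation these satisfy together with $L(f)$. That relation is the additivity of the trace along the long exact sequence of the pair: the map $f$ induces (by naturality) a chain endomorphism of the exact, hence acyclic, long exact sequence, and the Hopf trace formula forces the alternating sum of traces over an acyclic complex to vanish, which unwinds to
\[
L(f)\;=\;L(f|_A)\;+\;\sum_{n}(-1)^{n}\operatorname{tr}\!\bigl(f_*\colon H_n(B,A;\mathbb{Q})\to H_n(B,A;\mathbb{Q})\bigr).
\]
Combining with the previous paragraph, the left-hand side and the first term on the right are both zero, so the relative summand $\sum_n(-1)^n\operatorname{tr}(f_*\mid H_n(B,A;\mathbb{Q}))$ is zero as well; since both ingredients of the relative Lefschetz number — the contribution of $A$ and the contribution of $(B,A)$ — vanish, the relative Lefschetz number vanishes.

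In the language of the rest of the paper I expect this to be organized differently, and more robustly: the relative Lefschetz number is the image of a bicategorical trace, the inclusion $A\hookrightarrow B$ produces a cofibre sequence of Costenoble--Waner dualizable objects with $f$-induced endomorphisms, and the shadow of the total object splits as the sum of the shadows of the sub- and quotient objects, the relative trace being the quotient contribution — an instance of the additivity of the trace that refines the displayed identity and upgrades it to the level of Reidemeister traces. The real work, then, is not Theorem~A itself but the scaffolding it sits on: constructing the dualizable data for the pair $(B,A)$, checking compatibility of restriction along $A\hookrightarrow B$ and of the cofibre sequence with traces and shadows, and identifying the ``geometric'' form of the relative trace — a sum over the fixed point classes of $f$ on $B$, each weighted by its fixed point index — with the homological relative Lefschetz number. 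Granted that identification, Theorem~A is immediate: with no fixed points there are no fixed point classes, the geometric sum is empty, and the relative Lefschetz number is $0$.
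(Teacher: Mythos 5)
Your argument is correct, but it is genuinely different from the paper's proof, so a comparison is warranted.

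The paper defines two relative Lefschetz numbers in its bicategorical framework: a geometric one $\relidx{f}{B}{A}$, given by the trace of $\rcomspm{f}$ in the topological bicategory $\sE_{\Top_*}$, and a global one $\rlnumb{f}{B}{A}$, given by the trace of $H_*(f)$ in the algebraic bicategory. The proof of Theorem A in the paper proceeds geometrically: Lemma \ref{splitcomponents} identifies the components of $\relidx{f}{B}{A}$ with the fixed point indices $i(\mathrm{Fix}(f)\cap A,f)$ and $i(\mathrm{Fix}(f)\cap(B\setminus A),f)$, Corollary \ref{idxzero} uses tautness to split the total index and conclude $\relidx{f}{B}{A}=0$ when $f$ is fixed point free, and finally functoriality of the trace under the strong symmetric monoidal rational homology functor (\autoref{tracefunctor}) yields $\rlnumb{f}{B}{A}=\relidx{f}{B}{A}=0$. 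Your argument instead stays entirely on the homological side: you apply the classical Lefschetz fixed point theorem to $f$ and to $f|_A$ (the latter made possible by the observation that $f$ fixed-point-free forces $f|_A$ fixed-point-free), then recover the vanishing of the $(B,A)$ summand from the Hopf trace formula applied to the long exact sequence of the pair, i.e.\ the additivity $L(f)=L(f|_A)+L_{B/A}(f)$. Both approaches hinge on an additivity statement, but you use the homological one (exactness of the pair sequence) where the paper uses the geometric one (tautness splitting of the index). Your route is more elementary and sidesteps the need to develop the geometric trace at all, at the cost of not immediately generalizing: the paper's detour through the index-level invariant is precisely the step that carries over to the Reidemeister trace in the later sections, which is what Theorem B needs. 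You anticipate this correctly in your closing paragraph, so the main thing to note is that the additivity-along-the-exact-sequence argument, while valid for Theorem A, does not refine to the Reidemeister level, whereas the paper's bicategorical trace machinery does.
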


The relative Lefschetz number of the
identity map of the circle relative to a proper subinterval is not zero. 

Both of these theorems give a condition that implies
that a continuous endomorphism 
\[f\colon B\rightarrow B\] has a fixed point.  In most cases they do not 
give a condition that implies $f$ has no fixed points.  To address this question
a refined invariant and some restrictions on the spaces have to be introduced.  This 
refined invariant is called the Nielsen number or the Reidemeister trace.

\begin{thmnonumber}[Converse to the Lefschetz fixed point theorem]
Let $B$ be a closed smooth manifold of dimension at least 3 and \[f\colon
B\rightarrow B\] be a continuous map.  The map $f$ is homotopic 
to a map with no fixed points if and only if the Reidemeister trace of $f$ 
is zero.
\end{thmnonumber}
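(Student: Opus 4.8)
The plan is to prove the two implications separately, disposing of necessity quickly and spending the effort on the converse, which is a Wecken-type statement. For necessity, I would first record that the Reidemeister trace $R(f)$, being a trace in a bicategory with shadows, is invariant under homotopy of $f$: this is a formal consequence of the cyclicity of the shadow and the functoriality of the trace, since a homotopy produces an isomorphism of the relevant $2$-cells that the trace does not detect. Next, if $f'\colon B\to B$ has no fixed points, then the combinatorial data entering $R(f')$ is indexed by (paths between) fixed points of $f'$, hence by the empty set, so $R(f')=0$. Composing, if $f$ is homotopic to a fixed-point-free map then $R(f)=0$.

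For the converse, suppose $R(f)=0$. \emph{Step 1.} Using homotopy invariance, replace $f$ by a smooth map and then, by a $C^1$-small perturbation making the graph of $f$ transverse to the diagonal in $B\times B$, arrange that $\mathrm{Fix}(f)$ is finite with every fixed point nondegenerate of local index $\pm 1$; this is possible since $B$ is closed. \emph{Step 2.} Partition $\mathrm{Fix}(f)$ into Nielsen classes, where $x$ and $y$ lie in the same class when some path $\alpha$ from $x$ to $y$ satisfies $\alpha\simeq f\circ\alpha$ rel endpoints. Here I would invoke the identification, supplied by the functoriality of the trace, of the bicategorical $R(f)$ with the classical geometric Reidemeister trace: an element of the free module on the set of Reidemeister classes whose coefficient at the class of a Nielsen class $\mathbf N$ is the integer $\sum_{x\in\mathbf N}\mathrm{ind}(f,x)$. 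The hypothesis $R(f)=0$ then says precisely that every Nielsen class has total index zero.

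\emph{Step 3}, where the dimension hypothesis is essential. For each Nielsen class $\mathbf N=\{x_1,\dots,x_k\}$, pick paths witnessing the equivalences and, using $\dim B\ge 3$, homotope them to be embedded arcs that are pairwise disjoint and disjoint from the remaining fixed points; a regular neighborhood of each is a ball. A local homotopy of $f$ supported near these arcs (a ``pushing'' or finger move) coalesces the $x_i$ into a single fixed point of local index $\sum_i \mathrm{ind}(f,x_i)=0$. Finally apply the standard lemma, valid precisely because $\dim B\ge 3$: a self-map of a ball with no fixed points near the boundary and total fixed point index zero is homotopic rel a neighborhood of the boundary to a fixed-point-free map. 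Performing this in each ball removes all of $\mathrm{Fix}(f)$, so the resulting map, homotopic to the original $f$, has no fixed points.

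The main obstacle is twofold. The delicate point for the present approach is the identification used in Step 2: matching the abstractly defined shadow trace $R(f)$ with the geometric index sum over Nielsen classes requires a careful comparison of the duality data on $B$ (resp.\ on the relevant bimodules built from its fundamental groupoid) with the local fixed point indices, including correct bookkeeping of the fundamental-groupoid labels. The second difficulty is inherently geometric: the dimension hypothesis enters both in making the correcting arcs embedded and disjoint and, crucially, in the ``index zero is removable on a ball'' lemma. Since the statement is false for surfaces, no argument can sidestep the hypothesis $\dim B\ge 3$ at this point, and I would expect to handle Step 3 by adapting the classical Wecken cancellation.
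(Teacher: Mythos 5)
Your argument is correct, but it follows the classical Wecken route rather than the route this paper uses. The paper explicitly distinguishes these: it describes the simplicial/geometric cancellation argument (your Steps~1--3) as ``the standard proof'' and cites \cite{Brownbook, Jezierski95, schirmer86}, and it deliberately chooses a different path, following Klein and Williams \cite{KW, KW2}. In the paper's approach, one does not manipulate fixed points at all. Instead, one observes (as in \autoref{maptolift}) that homotopies of $f$ to a fixed-point-free map correspond to lifts of the graph $\Gamma_f\colon B\to B\times B$ through $B\times B\setminus\triangle$; via \autoref{lifttosect} this becomes the problem of finding a section of the fibration $\Gamma_{f*}(N(i))\to B$. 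The complete obstruction to such a section is the homotopy Euler class $\chi\in [S^0_B, S_B\Gamma_{f*}(N(i))]_B$ of \autoref{secttosect}. Because $\Gamma_{f*}(N(i))\to B$ is $(\dim B - 1)$-connected (\autoref{connectivity}), and $\dim B\ge 3$, the fiberwise Freudenthal suspension theorem stabilizes the obstruction group, and Costenoble--Waner duality together with \autoref{fibidentify} identifies the stable group with $\pi_0^s(\Lambda^f B_+)$ and the class with the geometric Reidemeister trace. So the dimension hypothesis enters through connectivity and Freudenthal rather than through embedded-arc and ball-cancellation arguments.

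What each approach buys: your Wecken argument is concrete and self-contained, but the local coalescing and ``index zero is removable on a ball'' steps are genuinely delicate and resist generalization (equivariant, fiberwise, relative), since the arc-embedding and general-position arguments must be redone in each new context. The obstruction-theoretic route replaces all the geometric surgery with a single connectivity estimate and a formal stabilization, and the identification of the stable obstruction with the trace is a duality computation. That is precisely why the paper adopts it: the relative Theorem~B (and the equivariant and fiberwise variants elsewhere) follow by the same outline, whereas porting the cancellation argument to the relative setting would require rebuilding the arc/ball machinery rel the subspace. Your identification in Step~2 of $R(f)$ with the index-sum over Nielsen classes is correct and is also established in the paper (this is \autoref{splitshadgeo} and \autoref{altgeoform} in the non-relative case), so the two approaches agree on what $R(f)=0$ means geometrically; they differ entirely in how they convert that vanishing into a fixed-point-free homotopy.
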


The Reidemeister trace is a partitioning of the Lefschetz number to reflect
the ways fixed points can be changed by a homotopy of the original map.
There is a generalization of the Reidemeister trace to a relative Reidemeister
trace that is very similar to the generalization of the Lefschetz number to the 
relative Lefschetz number.

\begin{thmB}[Converse to the Relative Lefschetz fixed point theorem]\label{thmb}
Suppose \\$A\subset B$ are closed smooth manifolds of dimension at least 3 and
the codimension of $A$ in $B$ is at least 2.
A map \[f\colon B\rightarrow B\] such that $f(A)\subset A$ is homotopic to 
a map with no fixed points through maps preserving the subset $A$
if and only if the relative Reidemeister trace of $f$ is zero. 
\end{thmB}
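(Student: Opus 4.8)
The plan is to prove the two implications separately: the forward implication is essentially formal, while the converse rests on the Wecken method made relative to $A$. Suppose first that $f$ is homotopic through $A$-preserving maps to a fixed point free map $g$. The relative Reidemeister trace is a trace in a bicategory with shadows, and, as with its absolute analogue, such traces are invariant under the corresponding notion of homotopy; in particular the relative Reidemeister trace descends to a function on the set of homotopy classes of $A$-preserving self maps of the pair $(B,A)$, so the relative Reidemeister traces of $f$ and $g$ agree. For $g$ the endomorphism in the bicategory whose trace computes the invariant is canonically trivial --- this is exactly the observation underlying Theorem~A and its refinement to the Reidemeister trace --- so the relative Reidemeister trace of $g$, hence of $f$, is zero.

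For the converse, assume the relative Reidemeister trace of $f$ vanishes. The first step is to unpack the invariant along the pair. The inclusion $A\hookrightarrow B$ and the cofiber sequence $A\hookrightarrow B\to B/A$, together with the functoriality of the bicategorical trace, express the relative Reidemeister trace of $f$ in terms of the absolute Reidemeister trace of the restriction $\bar f\colon A\to A$ and the fixed point data of $f$ on the fixed point classes meeting $B\setminus A$; vanishing of the total invariant forces each of these pieces to vanish. Since $\dim A\geq 3$, the converse to the Lefschetz fixed point theorem applies to $\bar f$ and yields a homotopy from $\bar f$ to a fixed point free self map $\bar g$ of $A$. As $A\hookrightarrow B$ is a cofibration, this homotopy extends to a homotopy of $f$, through $A$-preserving maps, to a map $f_1$ with $f_1|_A=\bar g$; by homotopy invariance the relative Reidemeister trace of $f_1$ is still zero. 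By compactness there is a closed neighborhood $N$ of $A$ in $B$ containing no fixed points of $f_1$, so every fixed point of $f_1$ lies in the compact manifold with boundary $B\setminus\mathrm{int}\,N$.

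It remains to remove the fixed points of $f_1$. They lie in finitely many Nielsen classes, and the part of the relative Reidemeister trace carried off $A$ --- still zero, being unchanged by the homotopy above --- says that the total index of each such class is zero. Here I would run the usual Wecken argument: since $\dim B\geq 3$, general position lets one consolidate the fixed points of each class to a single point, and the vanishing of that class's index then lets one cancel it by a local homotopy. The point is that all of this can be carried out by homotopies supported in $B\setminus N$: the arcs along which fixed points are moved and the low-dimensional disks guiding the cancellations can, because $\mathrm{codim}\,A\geq 2$, be perturbed off $A$, and the same hypothesis (via surjectivity of $\pi_1(B\setminus A)\to\pi_1(B)$) lets the paths witnessing that two fixed points lie in the same class be taken to miss $A$. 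Hence $A$ and a neighborhood of it are never disturbed, no new fixed points are created there, and one obtains a map $f_2$, $A$-preservingly homotopic to $f$, with no fixed points.

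The main obstacle is making the Wecken step genuinely relative. Two things require care. First, one must verify that the bicategorical relative Reidemeister trace decomposes along $A\hookrightarrow B\to B/A$ so that its vanishing is equivalent to the classical fixed point index conditions --- vanishing of the index of $\bar f$ on each Nielsen class of $A$, together with vanishing of the index of each class of $f$ meeting $B\setminus A$ --- ; this is exactly where the functoriality of the trace is used, and it is the bridge between the abstract construction and the geometry. Second, one must check that the consolidation and cancellation moves can be confined to $B\setminus N$ without disturbing $A$, and that the connecting arcs and disks can be chosen off $A$; this is where $\dim B\geq 3$ and $\mathrm{codim}\,A\geq 2$ are both needed, and it is the source of the dimension hypotheses in the statement. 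With these two points established, the remainder is the standard Wecken bookkeeping.
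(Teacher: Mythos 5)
Your proposal is correct in outline, but it follows a genuinely different route from the paper. You sketch the classical Wecken-style argument: decompose the relative Reidemeister trace into a piece living over $A$ and a piece living over $B$ (this is \autoref{altgeoform} / \autoref{splitshadgeo} in the paper, which you correctly identify as the bridge between the bicategorical construction and the index conditions), kill the fixed points on $A$ using the absolute converse theorem for $\dim A\geq 3$, extend the homotopy relatively by the cofibration property, and then consolidate and cancel the remaining fixed points in $B\setminus A$ by general position, using $\dim B\geq 3$ and $\mathrm{codim}\,A\geq 2$ (equivalently, surjectivity of $\pi_1(B\setminus A)\to\pi_1(B)$) to keep the moves off a neighborhood of $A$. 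This is essentially the approach of Schirmer and Zhao, which the paper explicitly cites and deliberately does not follow. The paper instead adapts Klein and Williams: \autoref{maptolift} and \autoref{lifttosect} translate ``relatively homotopic to fixed-point-free'' into the existence of a relative section of a pair of fibrations built from the graph of $f$; the connectivity estimate \autoref{connectivity} and the relative homotopy Euler class machinery of Section 7 (\autoref{secttosect}, \autoref{injectchi}, \autoref{bypass}, \autoref{chitochi}) give a complete obstruction $\rrkw{f}{B}{A}$; and \autoref{htpykwcompare} uses the fiberwise Freudenthal suspension theorem and Costenoble--Waner duality to identify that obstruction with the geometric Reidemeister trace. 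The trade-off: your route is more elementary and geometric, but the relative Wecken step you flag as ``the main obstacle'' genuinely requires substantial, delicate general-position work, none of which is done in your sketch; the paper's route replaces all of that with homotopy-theoretic obstruction theory (the hard analysis is packaged into the fiberwise Freudenthal theorem and duality), which is why the paper can run the equivariant and fiberwise generalizations with the same machinery. Both routes use the same dimension hypotheses and the same decomposition lemma, so the key lemmas overlap; it is the passage from ``invariant vanishes'' to ``fixed points can be removed'' that is handled differently.
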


The goal of this paper is to provide definitions of the relative Lefschetz number
and relative Reidemeister trace and proofs of Theorems A and B
that satisfy several requirements.  First, the relative Reidemeister trace should 
detect if a map is relatively homotopic to a map with no fixed points.  It is not necessary
for the relative Reidemeister trace to provide a lower bound for the number of fixed point of a given
map.  Second, the invariants should
be trace-like.  This means that they can be described using the duality and trace in
bicategories defined in \cite{thesis}.  The relative Reidemeister trace
should to be compatible with the approach of \cite{KW, KW2}.  Those papers 
give a proof of the converse to the  Lefschetz fixed point theorem that is different from the standard 
simplicial proof.  Finally, the relative Reidemeister trace should be compatible with 
an equivariant generalization of the Reidemeister trace described in \cite{equiv}.  

While the Lefschetz number and the Reidemeister trace have long established
definitions, the relative forms of these invariants are less settled.  Versions of the 
relative Lefschetz number have been defined in \cite{Bowszyc, Jezierski95}
and of the relative Reidemeister trace in \cite{NOW94,schirmer86, Zhao01, Zhao05,
Zhao06}.  The papers \cite{schirmer86, Zhao01, Zhao05} are primarily interested 
in determining lower bounds for 
the number of fixed points and so are generalizations of the Nielsen number.
The invariants defined in \cite{NOW94,Zhao06} are more trace-like, but the 
definition are still motivated by connections to the Nielsen number.  None of these
invariants satisfy all of our conditions above, and none of them exactly coincide
with the definitions given here.

In this paper we give proofs of Theorems A and B following the outline of \cite{thesis}.
We use duality and trace in bicategories with shadows to define two forms of 
the relative Lefschetz number and
the relative Reidemeister trace.  Then, using functoriality, we show 
different invariants coincide.  Finally, we generalize
Klein and Williams's proof of the converse to the Lefschetz fixed point
theorem in \cite{KW} to complete the proof of the converse to the relative 
Lefschetz fixed point theorem.

In the first two sections we will recall the necessary definitions of duality and trace 
in symmetric monoidal categories and in bicategories with shadows.  In 
\autoref{bicatdist} we will describe some examples of bicategories with shadows and generalize
some results from \cite{thesis} that describe specific examples of duals.

In \autoref{relidxsec}  we apply this category theory to the relative 
Lefschetz number. 
In Sections \ref{geosec} and  \ref{algsec} we define the relative 
Reidemeister trace.  We describe how to compare these invariants to each other
and how to compare them to the relative Nielsen number defined by 
Schirmer and Zhao.   In Sections \ref{converse} and \ref{relsec} we give a proof of the converse
to the relative Lefschetz fixed point theorem following the proofs given by Klein and
Williams in \cite{KW,KW2}.  In \autoref{extracat} we include some formal results 
omitted from the third section.

We assume the reader is familiar with the basic definitions of Nielsen theory.  References
for this topic include \cite{Brownbook, Jiang}.

\subsection*{Acknowledgements}
I would like to thank  Peter May, Gun Sunyeekhan, and Bruce Williams
for many helpful conversations and comments on previous drafts of this paper.
I would also like to thank Xuezhi Zhao for sending me one of his preprints.

\subsection*{Preliminaries}
We fix some conventions and recall a fact about cofibrations.

\begin{defn} Let $A\subset B$ and $X\subset Y$.  A map $f\colon B\rightarrow Y$ 
is a \emph{relative map} if $f(A)\subset X$.  
\end{defn}
We will write this \[f\colon 
(B,A)\rightarrow (Y,X).\]  
A homotopy 
$H\colon B\times I\rightarrow Y$ is a \emph{relative homotopy} if $H|_{A\times I}$
factors through the inclusion $X\subset Y$.

\begin{defn}\cite[3.1]{Zhao01}
A relative map $f\colon (B,A)\rightarrow (B,A)$ is \emph{taut} if there is a
neighborhood $N(A)$ of $A$ in $B$ such that $f(N(A))\subset A$.
\end{defn}

We will use this condition to isolate the fixed points of $A$ from those of $B\setminus A$.

\begin{lem}\cite[3.2]{Zhao01} If $A\subset B$ is a cofibration then any relative 
map $f\colon (B,A)\rightarrow (B,A)$ is relatively homotopic to a taut map.
\end{lem}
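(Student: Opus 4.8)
The plan is to exploit the NDR structure supplied by the cofibration. Since $A\subset B$ is a cofibration, the pair $(B,A)$ admits an NDR presentation: a map $u\colon B\to I$ with $A=u^{-1}(0)$, together with a homotopy $h\colon B\times I\to B$ satisfying $h(\cdot,0)=\mathrm{id}_B$, $h(a,t)=a$ for all $a\in A$ and $t\in I$, and $h(x,1)\in A$ whenever $u(x)<1$. Set $U=u^{-1}([0,1))$, an open neighborhood of $A$ with $h(U\times\{1\})\subset A$. Because $f(A)\subset A\subset U$, the composite $u\circ f\colon B\to I$ vanishes on $A$ and satisfies $(u\circ f)^{-1}([0,1))=f^{-1}(U)$, an open neighborhood of $A$.

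Next I would introduce a cutoff function $\lambda\colon B\to I$, for instance $\lambda(x)=\min\{1,\ \max\{0,\ 2-2u(f(x))\}\}$, so that $\lambda\equiv 1$ on the open neighborhood $N(A):=(u\circ f)^{-1}([0,1/2))$ of $A$, while $\lambda(x)>0$ forces $u(f(x))<1$, i.e.\ $f(x)\in U$. Then I would define $H\colon B\times I\to B$ by $H(x,t)=h(f(x),\,t\lambda(x))$. This is continuous, $H(\cdot,0)=f$, and for $a\in A$ we have $H(a,t)=h(f(a),t)=f(a)$, since $f(a)\in A$ and $h$ is stationary on $A$; thus $H$ restricted to $A\times I$ is the constant homotopy at $f|_A$, which factors through $A\subset B$, so $H$ is a relative homotopy. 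Finally, for $x\in N(A)$ we have $\lambda(x)=1$ and $f(x)\in U$, so $H(x,1)=h(f(x),1)\in A$; hence $g:=H(\cdot,1)$ is taut, witnessed by $N(A)$, and $H$ is a relative homotopy from $f$ to $g$.

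The point requiring care — and the reason I would route the construction through the function $u$ of the NDR pair rather than an arbitrary Urysohn function — is that the NDR homotopy $h$ only pushes points of $U$ into $A$ at the terminal time $t=1$, with no control at intermediate times. Consequently the cutoff $\lambda$ must genuinely attain the value $1$ on an entire neighborhood of $A$, not merely be positive there, and composing $u$ with $f$ produces such a $\lambda$ without imposing any separation hypothesis on $B$. Everything else is a routine verification that $H$ has the three required properties of a relative homotopy landing at a taut map.
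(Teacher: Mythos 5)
Your proof is correct. The paper itself does not reprove this lemma --- it simply cites Zhao's \cite[3.2]{Zhao01} --- so there is no internal argument to compare against, but your NDR-based construction is exactly the standard one, and it matches the NDR data the paper invokes explicitly elsewhere (the $(u,h)$ pair in the proof that $\tilde{B}_x/\bar{A}_x\simeq \tilde{B}_x\cup C\bar{A}_x$). Precomposing $u$ with $f$ to manufacture a cutoff that is identically $1$ on a genuine neighborhood of $A$ is the right move, and it handles the subtlety you flag: the NDR homotopy $h$ only lands in $A$ at time $t=1$, so the homotopy $H(x,t)=h(f(x),t\lambda(x))$ must run all the way to $1$ on a full neighborhood, not merely a positive fraction of the way. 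All the verifications (relativity of $H$, tautness of $H(\cdot,1)$ with witness $N(A)=(u\circ f)^{-1}([0,1/2))$) check out.
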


We will assume $A\subset B$ is a cofibration and all relative maps \[f\colon 
(B,A)\rightarrow (B,A).\] are taut.
If a relative map is not taut it is implicitly replaced by a relatively homotopic map that is 
taut.  Since all invariants defined here are invariants of the relative homotopy class, 
the choice of replacement does not matter.
\section{Duality and trace in symmetric monoidal categories}\label{symmetricmonoidal}

Duality and trace in symmetric monoidal categories is a generalization of
the trace in linear algebra that retains many important properties.  The trace
in a symmetric monoidal category
satisfies a generalization of invariance of basis and has nice functorial properties.  The Lefschetz fixed
point theorem is one application of the  functoriality properties of the trace.
This is a very brief summary of  \cite{DP}.  For more details see \cite{DP}, \cite[Chapter III]{LMS},
or \cite{shadows}.

Let $\sC$ be a symmetric monoidal category with monoidal product $\otimes$, unit $S$, and symmetry
isomorphism \[\gamma\colon X\otimes Y\rightarrow Y\otimes X.\]
\begin{defn} An object $A$ in $\sC$ is {\em dualizable} with dual $B$ if there are maps 
\[\eta\colon S\rightarrow A\otimes B\] and \[\epsilon\colon B\otimes A\rightarrow S\] such that the 
composites \[\xymatrix{A\cong S\otimes A\ar[r]^-{\eta\otimes\id}&A\otimes B\otimes A
\ar[r]^-{\id \otimes \epsilon}&A\otimes S\cong A}\] and 
\[\xymatrix{B\cong B\otimes S\ar[r]^-{\id\otimes \eta}&B\otimes A\otimes B
\ar[r]^-{\epsilon \otimes \id}&S\otimes B\cong B}\] are the identity maps of $A$ and $B$ 
respectively.
\end{defn}

The most familiar example of a symmetric monoidal category is the category of modules
over a commutative ring $R$.  The tensor product is the monoidal product.  
If $M$ is a finitely generated projective $R$-module,
$M$ is dualizable and the dual of $M$ is $\Hom _R(M,R)$.  The evaluation map
\[\epsilon\colon \Hom_R(M,R)\otimes_R M\rightarrow R\]  is defined by  $\epsilon(\phi,m)=\phi(m)$.  
Since $M$ is finitely generated and projective the dual basis theorem implies 
there is a `basis' $\{m_1,m_2,\ldots,m_n\}$ with dual `basis' $\{m_1',
m_2',\ldots,m_n'\}$.  The coevaluation is given by linearly extending 
\[\eta(1)=\sum m_i\otimes m_i'.\]

The category of chain complexes of modules over a commutative ring $R$ is 
also symmetric monoidal.  The dualizable objects are the chain complexes
that are projective in each degree and finitely generated.  

\begin{defn}
If $A$ is dualizable and $f\colon A\rightarrow A$ is an endomorphism in $\sC$, 
the \emph{trace} of $f$, $\tr(f)$, is the 
composite \[\xymatrix{ S\ar[r]^-\eta&A\otimes B\ar[r]^{f\otimes \id}&A\otimes 
B\ar[r]^\gamma&B\otimes A\ar[r]^-\epsilon&S}.\]
\end{defn}

The trace of an endomorphism in the symmetric monoidal category of vector spaces 
over a field is
the sum of the diagonal elements in a matrix representation. 
The trace of an endomorphism in the category of chain complexes of modules
over a commutative ring $R$ is called the \emph{Lefschetz number}.

\begin{prop}\label{symtracefunctor} Let $F\colon \sC\rightarrow \sD$ be
 a symmetric monoidal functor, $A$ be an object of  $\sC$ with dual $B$,
and \[F(A)\otimes F(B)\rightarrow F(A\otimes B)\] and 
\[S_{\sD}\rightarrow F(S_\sC)\] be isomorphisms.  
Then $F(A)$ is dualizable with dual $F(B)$.  
If $f\colon A\rightarrow
A$ is an endomorphism in $\sC$, $F(\tr(f))=\tr(F(f))$.
\end{prop}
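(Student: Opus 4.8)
The plan is to verify both claims directly from the definitions, using the hypothesis that the comparison maps $\phi\colon F(A)\otimes F(B)\to F(A\otimes B)$ and $\psi\colon S_\sD\to F(S_\sC)$ are isomorphisms. First I would produce the candidate unit and counit for the duality $(F(A),F(B))$ by transporting $\eta$ and $\epsilon$ along the monoidal structure of $F$: set
\[
\eta_F\colon S_\sD\xrightarrow{\psi}F(S_\sC)\xrightarrow{F(\eta)}F(A\otimes B)\xrightarrow{\phi^{-1}}F(A)\otimes F(B),
\]
and dually
\[
\epsilon_F\colon F(B)\otimes F(A)\xrightarrow{\phi}F(B\otimes A)\xrightarrow{F(\epsilon)}F(S_\sC)\xrightarrow{\psi^{-1}}S_\sD.
\]
Then I would check the two triangle identities for $\eta_F,\epsilon_F$. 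The key point is that $F$, being a symmetric monoidal functor, makes the relevant coherence diagrams commute: the unit isomorphisms $S\otimes A\cong A$, the associator, and the symmetry $\gamma$ are all respected by $F$ up to the comparison isomorphisms $\phi,\psi$. So applying $F$ to the triangle identity for $(\eta,\epsilon)$ in $\sC$, and then splicing in $\phi$'s, $\phi^{-1}$'s, $\psi$'s, and $\psi^{-1}$'s where needed — which cancel in adjacent pairs — yields exactly the triangle identity for $(\eta_F,\epsilon_F)$ in $\sD$. This gives dualizability of $F(A)$ with dual $F(B)$.

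For the trace statement, I would write out $\tr(F(f))$ using $\eta_F$ and $\epsilon_F$:
\[
S_\sD\xrightarrow{\eta_F}F(A)\otimes F(B)\xrightarrow{F(f)\otimes\id}F(A)\otimes F(B)\xrightarrow{\gamma}F(B)\otimes F(A)\xrightarrow{\epsilon_F}S_\sD.
\]
Now I would insert the definitions of $\eta_F$ and $\epsilon_F$ and slide the comparison isomorphisms past the middle maps. The map $F(f)\otimes\id_{F(B)}$ commutes with $\phi$ to give $F(f\otimes\id_B)$ since $F$ is a functor and $\phi$ is natural; similarly $\gamma_\sD$ on $F(A)\otimes F(B)$ corresponds under $\phi$ to $F(\gamma_\sC)$ because $F$ is a \emph{symmetric} monoidal functor. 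After these substitutions every $\phi$ pairs with a $\phi^{-1}$ and every $\psi$ with a $\psi^{-1}$, and the composite collapses to
\[
S_\sD\xrightarrow{\psi}F(S_\sC)\xrightarrow{F(\eta)}F(A\otimes B)\xrightarrow{F(f\otimes\id)}F(A\otimes B)\xrightarrow{F(\gamma)}F(B\otimes A)\xrightarrow{F(\epsilon)}F(S_\sC)\xrightarrow{\psi^{-1}}S_\sD,
\]
which is $\psi^{-1}\circ F(\tr(f))\circ\psi$. Identifying $S_\sD$ with $F(S_\sC)$ via $\psi$, this is $F(\tr(f))$, as an endomorphism of the unit.

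The main obstacle is purely bookkeeping: keeping track of the coherence isomorphisms (unitors, associator, symmetry) and the monoidal-functor comparison maps as one slides them through the composites, and confirming that each appears paired with its inverse so that everything cancels. No genuinely hard idea is involved — the only substantive input beyond diagram-chasing is that $F$ respects the symmetry $\gamma$, which is exactly the hypothesis that $F$ is symmetric monoidal and is what makes the $\gamma$ in the definition of trace transport correctly. I would organize the argument so that the triangle-identity verification is done once and then reused, rather than re-deriving the duality data inside the trace computation.
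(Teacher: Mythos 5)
Your proof is correct, and it is the standard argument: transport $(\eta,\epsilon)$ along the monoidal comparison isomorphisms of $F$, verify the triangle identities by coherence, and then chase the trace composite so that the comparison maps cancel in adjacent pairs. The paper does not supply its own proof of this proposition --- it records the statement as a known fact from the references cited at the start of \autoref{symmetricmonoidal} --- and your argument matches the one found there, so there is nothing to contrast.
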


The  stable homotopy category is a symmetric monoidal category.
There is also a way to express duality for spaces without using spectra.

\begin{defn} A based space $X$ is $n$-\emph{dualizable} if there is a based 
space $Y$ and continuous maps 
$\eta\colon S^n\rightarrow X\wedge Y$ and 
$\epsilon\colon Y\wedge X\rightarrow S^n$ such that the diagrams 
\[\xymatrix{S^n\wedge X\ar[r]^-{\eta\wedge \id}\ar[dr]_\gamma&
X\wedge Y\wedge X\ar[d]^-{\id\wedge \epsilon}&Y\wedge S^n\ar[r]^-{\id\wedge \eta}\ar[dr]_{(
\sigma\wedge 1)\gamma} &Y\wedge X\wedge Y\ar[d]^{\epsilon\wedge \id}\\
&X\wedge S^n&&S^n\wedge Y
}\] commute up to stable homotopy.
\end{defn}

The map $\sigma\colon S^n\rightarrow S^n$ is defined by  $\sigma(v)=-v$.

\begin{prop}\cite[III.4.1, III.5.1]{LMS}\cite[18.6.5]{MS}\label{topduals}
\begin{enumerate}
\item If $M$ is a closed smooth manifold that embeds in $\mathbb{R}^m$, then 
$M_+$ is dualizable with dual $T\nu$, the Thom space of the normal bundle of 
the embedding of $M$ in $\mathbb{R}^m$.
\item If $L$ is a closed submanifold of a closed smooth manifold $M$ that embeds
in $\mathbb{R}^m$, then $M\cup CL$ is dualizable with dual 
$T{\nu_M}\cup CT{\nu_L}$.
\item If $B$ is a compact ENR that embeds in $\mathbb{R}^n$, $B_+$ is 
dualizable with dual the cone on the inclusion $\mathbb{R}^n\setminus B
\rightarrow \mathbb{R}^n$.
\item If $B$ is a compact ENR that embeds in $\mathbb{R}^n$ and 
$A$ is a sub ENR of $B$, then $B\cup CA$ is dualizable with dual $(\mathbb{R}^n\setminus A)
\cup C(\mathbb{R}^n\setminus B)$.
\end{enumerate}
\end{prop}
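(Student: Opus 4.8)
The plan is to derive all four statements from a single construction: the Atiyah/Spanier--Whitehead duality for a compact ENR embedded in Euclidean space, together with a ``relative'' upgrade obtained by cofibre sequences. For part (3), I would start from a compact ENR $B\subset\mathbb R^n$. Since $B$ is an ENR, there is an open neighborhood $U$ of $B$ in $\mathbb R^n$ and a retraction $r\colon U\to B$; choosing $U$ to deformation retract onto $B$ (possible for ENRs), one obtains a Pontryagin--Thom style collapse map $S^n\to \mathbb R^n/(\mathbb R^n\setminus U)\simeq \mathbb R^n/(\mathbb R^n\setminus B)$ and, smashing with the diagonal $B_+\to B_+\wedge B_+$ composed with the retraction, a coevaluation $\eta\colon S^n\to B_+\wedge \bigl(\mathbb R^n/(\mathbb R^n\setminus B)\bigr)$. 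Here the stated dual, ``the cone on the inclusion $\mathbb R^n\setminus B\to\mathbb R^n$,'' is just $\mathbb R^n\cup C(\mathbb R^n\setminus B)$, which is homeomorphic to $\mathbb R^n/(\mathbb R^n\setminus B)$ up to the relevant homotopy; I would record this identification explicitly. The evaluation $\epsilon\colon \bigl(\mathbb R^n/(\mathbb R^n\setminus B)\bigr)\wedge B_+\to S^n$ is induced by the map $(\mathbb R^n,\mathbb R^n\setminus B)\times B\to (\mathbb R^n,\mathbb R^n\setminus 0)$ sending $(x,b)\mapsto x-b$, using again the neighborhood retraction to see this is well-defined up to homotopy. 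The two triangle identities then follow from a standard (if slightly fiddly) chase comparing $x\mapsto x-b$ with the collapse, exactly as in \cite[III.5]{LMS}; I would cite that rather than reproduce it.

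For part (1), the manifold case is the specialization of (3) in which $B=M$ is a closed smooth manifold: then $\mathbb R^n\setminus M$ can be replaced, by excision and the tubular neighborhood theorem, with the complement of the zero section in the total space of the normal bundle $\nu$, and $\mathbb R^n/(\mathbb R^n\setminus M)$ is exactly the Thom space $T\nu$. So (1) is (3) plus the identification of the dual, and this identification is where the smooth structure is used; I would phrase it as a corollary. This is essentially Atiyah duality, so I would cite \cite[III.4.1]{LMS} directly.

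For the relative statements (2) and (4), the idea is that duality passes to cofibres. Given $A\subset B$ compact ENRs in $\mathbb R^n$, both $B_+$ and $A_+$ are dualizable by (3), with duals $D(B)=\mathbb R^n/(\mathbb R^n\setminus B)$ and $D(A)=\mathbb R^n/(\mathbb R^n\setminus A)$, and the inclusion $A_+\to B_+$ is dual to the collapse $D(B)\to D(A)$ (induced by $\mathbb R^n\setminus B\subset \mathbb R^n\setminus A$); this compatibility is the key lemma and follows by inspecting the $\eta,\epsilon$ above. Now $B\cup CA$ is the cofibre of $A_+\to B_+$, and the Spanier--Whitehead dual of a cofibre is the fibre, which in the stable category is again a cofibre (shifted): the dual of $\operatorname{cofib}(A_+\to B_+)$ is $\operatorname{cofib}\bigl(D(B)\to D(A)\bigr)$ desuspended, and $\operatorname{cofib}(D(B)\to D(A))$ is, up to the homeomorphisms above, $(\mathbb R^n\setminus A)\cup C(\mathbb R^n\setminus B)$ — one has to check the cone directions match, i.e.\ that collapsing $D(B)$ inside the mapping cone of $D(B)\to D(A)$ yields precisely the stated space, which is a diagram chase with the long exact (co)fibre sequences. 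Concretely I would build $\eta$ and $\epsilon$ for the pair directly from those for $B$ and $A$ by passing to the evident quotients and checking the triangle identities degreewise, using that smashing and cofibre sequences interact well in the stable category. Part (2) is then the smooth specialization of (4), identifying $\mathbb R^n\setminus A$ with the normal-bundle complements so that $\operatorname{cofib}(D(B)\to D(A))$ becomes $T\nu_M\cup C\,T\nu_L$.

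The main obstacle is bookkeeping rather than conceptual: making the identifications ``cone on an inclusion $=$ quotient'' precise enough that the cofibre-of-a-map computation lands on the nose on the asserted spaces, and checking that the two cone coordinates (the one built into $B\cup CA$ and the one built into the dual $(\mathbb R^n\setminus A)\cup C(\mathbb R^n\setminus B)$) are the ones that pair correctly under $\epsilon$ — a sign/orientation issue of the same flavor as the map $\sigma(v)=-v$ appearing in the definition of $n$-dualizability. Everything else is an application of \autoref{symtracefunctor}-style formalism and the cited results \cite[III.4.1, III.5.1]{LMS}, \cite[18.6.5]{MS}, to which I would defer for the non-relative cases and mimic for the relative ones.
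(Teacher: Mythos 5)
The paper does not prove this proposition: it is stated as a citation to \cite[III.4.1, III.5.1]{LMS} and \cite[18.6.5]{MS}, with no argument supplied. So there is no in-paper proof to compare against, and the only useful question is whether your plan would actually deliver the four statements.

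Your plan is the standard one and is in the same spirit as those references: construct the $n$-duality of $B_+$ for a compact ENR $B\subset\mathbb{R}^n$ directly from a Pontryagin--Thom collapse together with the difference map $(x,b)\mapsto x-b$; specialize to closed manifolds by the tubular neighborhood theorem to recover Atiyah duality; and obtain the relative cases by dualizing the cofiber sequence $A_+\to B_+\to B\cup CA$. That outline is correct, and citing \cite{LMS} for the triangle identities in the absolute case is reasonable.

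Where you should be more careful is in the suspension and mapping-cone bookkeeping for the relative cases, which you flag but underestimate. Dualizing the cofiber sequence gives $\mathbb{D}(B\cup CA)=\mathrm{fib}\bigl(\mathbb{D}(B_+)\to\mathbb{D}(A_+)\bigr)=\Sigma^{-1}\mathrm{cofib}\bigl(\mathbb{D}(B_+)\to\mathbb{D}(A_+)\bigr)$; at level $n$ this is $\Sigma^{-1}\mathrm{cofib}\bigl(D(B)\to D(A)\bigr)$. You need to check explicitly that this desuspension cancels, so that the $n$-dual of $B\cup CA$ is a genuine \emph{space} and not a spectrum off by a suspension. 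It does: the $3\times 3$ diagram of cofiber sequences built from $(\mathbb{R}^n\setminus B)_+\to\mathbb{R}^n_+\to D(B)$ and its analogue for $A$ shows $\mathrm{cofib}\bigl(D(B)\to D(A)\bigr)\simeq\Sigma\bigl((\mathbb{R}^n\setminus A)\cup C(\mathbb{R}^n\setminus B)\bigr)$, so the $\Sigma^{-1}$ and this $\Sigma$ cancel, which is exactly why statement (4) comes out as an $n$-duality with no residual shift. You should record this cancellation rather than leave it as ``bookkeeping.'' The more substantive point is in part (2): after the Thom identifications $D(M)\simeq T\nu_M$ and $D(L)\simeq T\nu_L$, the map your construction produces is the Thom collapse $T\nu_M\to T\nu_L$ (induced by $\mathbb{R}^m\setminus M\subset\mathbb{R}^m\setminus L$). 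The mapping cone you obtain is therefore formed on that collapse, not on an inclusion $T\nu_L\to T\nu_M$ (and since $\operatorname{rank}\nu_L>\operatorname{rank}\nu_M$, no such inclusion exists). You should verify against the statement in \cite{LMS} exactly which cone $T\nu_M\cup CT\nu_L$ denotes and that it matches what the cofiber computation hands you; this is not merely a sign/orientation issue like $\sigma(v)=-v$, it determines which space you are attaching the cone to, and you should not wave at it.
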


Here $C$ denotes the cone.  If $A\subset B$ then $B\cup CA$ is the mapping 
cone on the inclusion $A\rightarrow B$.  The base point of $B\cup CA$ is the 
cone point.

The trace of an endomorphism of spaces regarded as a map in the stable homotopy 
category is called the {\em fixed point index}.  The index is the stable homotopy class
of a map \[S^n\rightarrow S^n\] and so is an element of 
the $0^{th}$ stable homotopy group of $S^0$, $\pi_0^{s}$.  For other
descriptions of the fixed point index see \cite{Brownbook, Doldbook}.

The index of the identity map of a space $X$ is called the {\em Euler characteristic}
of $X$ and it is denoted 
$\chi(X)$.  This is consistent with the classical definition of the Euler characteristic.

Since the rational homology functor is strong symmetric monoidal, 
\autoref{symtracefunctor} implies
that the fixed point index of a map $f$ is equal to the Lefschetz number of $H_*(f)$.
Since the fixed point index of a map with no fixed points is zero, this implies the 
Lefschetz fixed point theorem.

\section{Duality and trace for bicategories with shadows}\label{dualbicat}

Unfortunately, the Reidemeister trace cannot be defined as a trace 
in a symmetric monoidal category.  It can be defined using the more general 
trace in a bicategory with shadows.  Duality and trace in a 
bicategory are very similar to duality and trace in a symmetric monoidal
category but are more flexible.  
This is a brief summary of the relevant sections of \cite{MS} and \cite{thesis}.  For more 
details see \cite[Chapter 16]{MS}, \cite{thesis}, or \cite{shadows}.

\begin{defn}\cite[1.0]{Leinster} A \emph{bicategory}
$\sB$ consists of
\begin{enumerate}\item A collection $\ob\sB$.
\item Categories $\sB(A,B)$ for each $A,B\in \mathrm{ob}\sB$.
\item Functors \[\odot \colon \sB(B,C)\times \sB(A,B)\rightarrow
\sB(A,C)\]
\[U_A\colon \ast \rightarrow \sB(A,A)\]
for $A$, $B$ and $C$ in $\mathrm{ob}\sB$.
\end{enumerate}
Here $\ast$ denotes the category with one object and one morphism.
The functors $\odot$ are  required to satisfy unit and
associativity conditions  up to  natural isomorphism 2-cells.
\end{defn}
The elements of $\ob\sB$ are called 0-cells.  The objects of $\sB(A,B)$
are called 1-cells.  The morphisms of $\sB(A,B)$ are called 2-cells.

The most familiar example of a bicategory is the bicategory $\Mod$ 
with 0-cells rings, 1-cells bimodules,
and 2-cells homomorphisms.  The bicategory composition is tensor product.

\begin{defn}\cite[16.4.1]{MS} A 1-cell $X\in \sB(B,A)$ is \emph{right dualizable} with 
dual $Y\in \sB(A,B)$ if there are 2-cells \[\eta\colon U_A\rightarrow X\odot Y\hspace{1cm}\epsilon\colon Y\odot X
\rightarrow U_B\] such that 
\[\xymatrix@C=10pt@R=10pt
{Y\ar[r]^-\sim \ar[dddrrr]^\id
&Y\odot U_A\ar[rr]^-{\id \odot \eta}
&&Y\odot X\odot Y\ar[dd]^{\epsilon\odot \id}
&X\ar[r]^-\sim\ar[dddrrr]^\id
&U_A\odot X\ar[rr]^-{\eta\odot \id}
&&X\odot Y\odot X\ar[dd]^{\id \odot \epsilon}
\\
\\
&&&U_B\odot Y\ar[d]^\wr
&&&&X\odot U_B\ar[d]^\wr
\\&&& Y 
&&&&X}\]
commute.
\end{defn}
The map $\eta$ is called the {\em coevaluation} and $\epsilon$ is called the {\em evaluation}.

If $R$ is a (not necessarily commutative) ring 
and $M$ is a finitely generated projective right $R$-module then $M$ is a right 
dualizable 1-cell in  
${\Mod}$ with dual $\Hom_R(M,R)$.  The evaluation map 
\[\epsilon\colon\Hom_R(M,R)\otimes_\mathbb{Z}M\rightarrow R\] is defined by 
$\epsilon(\phi,m)=\phi(m)$.  This is a map of $R$-$R$-bimodules. 
Since $M$ is finitely generated and projective there are elements $\{m_1, m_2, \ldots
, m_n\}$ and dual elements $\{m_1', m_2', \ldots, m_n'\}$ of $\Hom_R(M,R)$ so that the 
coevaluation map \[\eta\colon\mathbb{Z}\rightarrow M\otimes_R \Hom_R(M,R)\]
is defined by linearly extending $\eta(1)=\sum m_i\otimes m_i'$.  This is a map of abelian groups. 

Unlike the symmetric monoidal case, we need more structure before we can define trace.
The additional structure is a shadow.

\begin{defn}\cite[4.4.1]{thesis}
A \emph{shadow} for $\sB$ is a functor \[\sh{-}\colon \coprod_{A\in\ob \sB}
\sB(A,A)\rightarrow \sT\] to a 
category $\sT$ and a natural isomorphism \[\theta\colon \sh{X\odot Y}\cong \sh{Y\odot X}\] for every pair of 
1-cells $X\in \sB(A,B)$ and $Y\in \sB(B,A)$ such that 
\[\xymatrix{\sh{(X\odot Y)\odot Z}\ar[r]^\theta\ar[d]&\sh{Z\odot (X\odot Y)}
\ar[r]&\sh{(Z\odot X)\odot Y}\\
\sh{X\odot(Y\odot Z)}\ar[r]_\theta&\sh{(Y\odot Z)\odot X}\ar[r]&
\sh{Y\odot (Z\odot X)}\ar[u]_\theta
}\]
\[\xymatrix{\sh{Z\odot U_A}\ar[r]^\theta\ar[dr]&\sh{U_A\odot Z}\ar[d]
\ar[r]^\theta&\sh{Z\odot U_A}\ar[dl]\\&\sh{Z}}\]
commute. 
\end{defn}

Let $P$ be an $R$-$R$-bimodule.  Let $N(P)$ be the subgroup of $P$ generated by 
elements of the form \[rp-pr\] for $p\in P$ and $r\in R$.  Then the shadow of $P$ 
is $P/N(P)$. 

\begin{defn}\cite[4.5.1]{thesis}
Let $X$ be a dualizable 1-cell and $f\colon Q\odot X\rightarrow X\odot P$ be a 2-cell in 
$\sB$.  The \emph{trace} of $f$ is the composite
\[\xymatrix{{\sh{Q}}\cong \sh{Q\odot U_A}\ar[r]^-{\id \odot \eta}&
\sh{Q\odot X\odot Y}\ar[d]^-{f\odot \id}\\
&\sh{X\odot P\odot Y}\ar[r]^\theta&
\sh{P\odot Y\odot X}\ar[r]^-{\id \odot \epsilon}&\sh{P\odot U_B}\cong\sh{P}.}\] 
\end{defn}

If $M$ is a finitely generated projective right $R$-module and $f\colon M\rightarrow M$ is a map 
of right $R$-modules the trace of $f$ is the trace defined by Stallings in \cite{Stallings}.

A functor of bicategories $F$ is a \emph{shadow functor}
 if there is a natural transformation 
\[\psi\colon \sh{F(-)}\rightarrow F(\sh{-})\] such that 
\[\xymatrix{\sh{FX\odot FY}\ar[r]^\theta\ar[d]&\sh{FY\odot F(X)}\ar[d]\\
\sh{F(X\odot Y)}\ar[d]^\psi&\sh{F(Y\odot X)}\ar[d]^\psi\\
F(\sh{X\odot Y})\ar[r]^\theta&F(\sh{Y\odot X})}\] commutes for all 1-cells $X$ and $Y$
where $X\odot Y$ and $Y\odot X$ are both defined. 

\begin{prop}\label{tracefunctor}\cite[4.5.7]{thesis} Let $X$ be a right dualizable 1-cell in $\sB$ with dual $Y$, 
\[f\colon Q\odot X\rightarrow X\odot P\] be a 2-cell in $\sB$ and $F\colon \sB\rightarrow \sB'$
be a shadow functor.  If $F(X)\odot F(Y)\rightarrow F(X\odot Y)$,
$F(X)\odot F(P)\rightarrow F(X\odot P)$,
and $U_{F(B)}\rightarrow F(U_B)$ are isomorphisms and  $\hat{f}$ is the composite
\[\xymatrix{FQ\odot FX\ar[r]^-{\phi}&F(Q\odot X)\ar[r]^{F(f)}&F(X\odot P)\ar[r]^{\phi^{-1}}&FX\odot FP}\]
then \[\xymatrix{\sh{FQ}\ar[r]^{\tr(\hat{f})}\ar[d]^\psi&\sh{FP}\ar[d]^\psi\\
F\sh{Q}\ar[r]^{F(\tr(f))}&F\sh{P}}\] commutes.
\end{prop}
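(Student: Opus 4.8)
The plan is to unwind the definition of $\tr$ on both sides of the square and then realize that square as the outer boundary of one large diagram tiled by small cells, each of which commutes for an identifiable reason. Everything is formal once the dual data for $FX$ is pinned down, so the work is in drawing the diagram with all coherence $2$-cells inserted.

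First I would record the dual data. Exactly as in the first half of \autoref{symtracefunctor}, in the bicategorical form recorded in \cite{MS, thesis}, the hypothesis that $FX\odot FY\to F(X\odot Y)$ and the relevant unit maps are isomorphisms shows that $FX$ is right dualizable with dual $FY$, with coevaluation
\[\tilde\eta\colon U_{FA}\xrightarrow{\;\sim\;}F(U_A)\xrightarrow{F(\eta)}F(X\odot Y)\xrightarrow{\phi^{-1}}FX\odot FY\]
and with evaluation $\tilde\epsilon\colon FY\odot FX\to U_{FB}$ defined dually. So $\tr(\hat{f})$ is by definition the composite $\sh{FQ}\cong\sh{FQ\odot U_{FA}}\xrightarrow{\id\odot\tilde\eta}\sh{FQ\odot FX\odot FY}\xrightarrow{\hat{f}\odot\id}\sh{FX\odot FP\odot FY}\xrightarrow{\theta}\sh{FP\odot FY\odot FX}\xrightarrow{\id\odot\tilde\epsilon}\sh{FP\odot U_{FB}}\cong\sh{FP}$, while $F(\tr(f))$ is $F$ applied to the analogous composite built from $\eta$, $f$, $\theta$, $\epsilon$ in $\sB$.

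Next I would build the ladder connecting these two composites. The vertical map at a typical stage is $\psi$ preceded by the lax structure isomorphism $\phi$ that turns an iterated $\odot$ of $F$-images into an $F$-image of an iterated $\odot$; e.g. at the stage $\sh{FQ\odot FX\odot FY}$ the vertical is $\sh{FQ\odot FX\odot FY}\xrightarrow{\phi}\sh{F(Q\odot X\odot Y)}\xrightarrow{\psi}F\sh{Q\odot X\odot Y}$, and at the two ends the vertical is just $\psi$, matching the square in the statement. I then check each cell: the cells at the two ends commute because $F$ carries the unit isomorphisms of $\sB'$ to those of $\sB$ and $\psi$ is natural; the cells containing $\id\odot\eta$ and $\id\odot\epsilon$ commute after substituting the formulas for $\tilde\eta$ and $\tilde\epsilon$, whereupon the $\phi$'s appearing there telescope against the $\phi$'s in the verticals (coherence of the lax structure of $F$) and one is left with naturality of $\psi$ applied to $F(\eta)$, $F(\epsilon)$; the cell containing $\hat{f}$ commutes because $\hat{f}=\phi^{-1}\circ F(f)\circ\phi$ by definition, so once more the $\phi$'s cancel and only naturality of $\psi$ on $F(f)$ remains; and the cell containing $\theta$ commutes by precisely the shadow-functor compatibility axiom displayed just before the statement. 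Stacking the cells gives the asserted square as the outer boundary.

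The main obstacle is bookkeeping rather than conceptual. One must name the associativity and unit $2$-cells of $\sB$ and $\sB'$ explicitly, verify that the iterated lax map $FX\odot FP\odot FY\to F(X\odot P\odot Y)$ is well defined independently of parenthesization and is an isomorphism under the stated hypotheses (this is where the functor-of-bicategories coherence is genuinely used, and where one must observe that $F(X\odot P)\odot FY\to F((X\odot P)\odot Y)$ is also an isomorphism so that the $\theta$-cell makes sense), and keep careful track of the order in which the $\phi$'s and $\phi^{-1}$'s cancel in each cell. Once the diagram is fully decorated this is routine, and each cell falls into one of the four cases above.
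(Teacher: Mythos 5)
The paper itself gives no proof of this proposition; it is cited directly from \cite[4.5.7]{thesis}. So there is no proof in this document to compare against, but your argument can still be assessed on its merits, and it is the standard one: use the three hypothesized isomorphisms to show that $FX$ is right dualizable with dual $FY$, unwind $\tr(\hat{f})$ and $F(\tr(f))$ into their defining composites, and connect them with a ladder whose verticals are lax constraints $\phi$ followed by $\psi$, checking each rung by coherence of $F$, naturality of $\psi$, the definition of $\hat{f}$, and the displayed shadow-compatibility axiom. This is exactly the bicategorical analogue of \autoref{symtracefunctor} and the approach is correct.

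One inaccuracy worth flagging: you assert that one must ``observe that $F(X\odot P)\odot FY\to F((X\odot P)\odot Y)$ is also an isomorphism so that the $\theta$-cell makes sense.'' This does not follow from the stated hypotheses (only $FX\odot FY\to F(X\odot Y)$, $FX\odot FP\to F(X\odot P)$, and $U_{FB}\to F(U_B)$ are assumed invertible), and it is not actually needed. The lax constraints appearing in intermediate rungs of the ladder serve only as components of the vertical comparison maps; a commutative square does not require its verticals to be isomorphisms. Invertibility is used exactly where a constraint must be run backwards, namely to produce the coevaluation and evaluation for $(FX,FY)$ and to build $\hat{f}$, and those are precisely the three maps the hypothesis singles out. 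The $\theta$-cell itself commutes by applying the shadow-functor axiom with the pair $(X, P\odot Y)$ together with naturality of $\theta$ against the lax constraint $FP\odot FY\to F(P\odot Y)$; no extra invertibility is required. Dropping that spurious claim, the rest of your bookkeeping is fine.
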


We will use this proposition to compare different forms of the Lefschetz number and 
Reidemeister trace.
\section{Some examples of bicategories with shadows}\label{bicatdist}
The classical descriptions of fixed point invariants require a choice of base point.
When working with a single space this isn't 
a problem.  With fiberwise spaces, equivariant spaces, or pairs of spaces,
choosing base points requires addition conditions on the space.  
We can avoid these choices by using groupoids rather than groups.

In this 
section we describe a generalization of the bicategory 
$\Mod$ that we will use to defined fixed point invariants without 
choosing base points.  In this bicategory  we replace rings by categories, modules by
 functors, and homomorphisms by natural transformations.

Let $\sV$ be a symmetric monoidal category with monoidal product $\otimes$ and 
unit $S$.  
\begin{defn} A category $\sA$ is {\em enriched} in $\sV$ if
for each $a,b\in \mathrm{ob}(\sA)$, $\sA(a,b)$ is an object of $\sV$ and the composition 
for $\sA$, 
\[\sA(b,c)\otimes \sA(a,b)\rightarrow \sA(a,c),\] is a morphism in $\sV$.  
\end{defn}
For pairs of enriched categories $\sA$ and $\sB$ define an enriched category $\sA\otimes \sB$
with objects  
pairs $(a,b)$ where $a\in\ob\sA$ and $b\in\ob\sB$.  
If $a,a'\in\ob\sA$ and $b,b'\in\ob\sB$, then \[(\sA\otimes \sB)((a,b),
(a',b'))=(\sA(a,a'))\otimes(\sB(b,b')).\]

\begin{defn} An \emph{enriched distributor} is a functor $\sX\colon \sA\otimes \sB^{\op}\rightarrow \sV$ 
such that the action of morphisms of $\sA$ and 
$\sB$ on $\sX$ are maps in $\sV$.  
\end{defn}

This type of functor is also called an $\sA$-$\sB$-bimodule.  If $F\colon \sA\rightarrow \sC$ is an 
enriched functor and $\sY\colon \sC\otimes \sB^\op\rightarrow \sV$ is a distributor
define a new distributor $\sY^F\colon \sA\otimes \sB^\op\rightarrow \sV$ by 
$\sY^F(a,b)=\sY(F(a),b)$.

\begin{defn}An \emph{enriched natural transformation} $\eta\colon \sX\rightarrow \sY$ 
is a natural transformation
where the maps \[\eta_{a,b}\colon \sX(a,b)\rightarrow \sY(a,b)\] are maps in $\sV$ for
all $a\in \ob\sA$ and $b\in\ob\sB$. 
\end{defn}

Enriched categories are the 0-cells of a bicategory we denote by $\sE_\sV$.  The 1-cells
are the distributors enriched in $\sV$.  The 2-cells are enriched natural transformations.   
If $\sX\colon \sA\otimes \sB^{\op}\rightarrow \sV$ and $\sY\colon \sB\otimes \sC^{\op}\rightarrow \sV$
are two distributors, $\sX\odot \sY$ is a distributor $\sA\otimes \sC^{\op}\rightarrow \sV$.
For $a\in \ob(\sA)$ and $c\in \ob(\sC)$, $\sX\odot \sY(a,c)$ is the coequalizer of the
actions of $\sB$ on $\sX$ and $\sY$,
\[\xymatrix{\displaystyle{\coprod_{b,b'\in \ob\sB}\sX(a,b)\otimes \sB(b',b)\otimes \sY(b',c)}
\ar@<.5ex>[r]\ar@<-.5ex>[r]&
\displaystyle{\coprod_{b\in \ob\sB}\sX(a,b)\otimes \sY(b,c)}}.\]

If $\sZ\colon \sA\odot \sA^\op\rightarrow \sV$ is an enriched functor, the
\emph{shadow} of $\sZ$, $\sh{\sZ}$, is the coequalizer of the two actions 
of $\sA$ on $\sZ$,
\[\xymatrix{\displaystyle{\coprod_{a,a'\in \mathrm{ob}(\sA)}} \sA(a,a')\otimes \sZ(a,a')
\ar@<.5ex>[r]\ar@<1.5ex>[r]&\displaystyle{\coprod_{a\in \mathrm{ob}(\sA)}} \sZ(a,a)}.\]

In \cite[Chapter 9]{thesis} we observed that if $\sA$ is a connected groupoid, a distributor 
\[\sX\colon \sA\rightarrow \sV\] is dualizable if and only if $\sX(a)$ is dualizable over $\sA(a,a)$ for any 
$a\in \ob \sA$.  The categories we will use here and in \cite{equiv} to define relative
and equivariant fixed point invariants are not usually groupoids, but we can extend the
results from \cite{thesis} to describe these examples.  

\begin{defn}\cite[II.9.2]{Luck} A category $\sA$ is an \emph{EI-category} 
if all endomorphisms are isomorphisms.
\end{defn}

In an EI-category there is a partial order on the set of objects: $x<y$  if $\sA(x,y)\neq
\emptyset$.  

Let $\Ch_R$ be the symmetric monoidal category of chain complexes of modules
over a commutative ring $R$ and chain maps.   Let $\sA$ be a category enriched
in the category of modules over $R$. This can be regarded as a category enriched in chain complexes
concentrated in degree zero.

\begin{defn}\label{trivialalg}A functor $\sX \colon \sA\rightarrow {\Ch_R}$ is {\em supported 
on isomorphisms}
if $\sX (f)$ is the zero map if $f$ is not an isomorphism.
\end{defn}

If $\sX$ is supported on isomorphisms it only `sees' a disjoint collection of groupoids 
rather than the entire category $\sA$. 

Let $B(\sA)$ be a choice of representative for each isomorphism 
class of objects in $\sA$.

\begin{lem}\label{specialtensor}  If $\sX \colon \sA^{op}\rightarrow {\Ch_R}$ and 
$\sY\colon \sA\rightarrow {\Ch_R}$ are supported on isomorphisms then 
\[\sX \odot \sY \cong \displaystyle{ \bigoplus_{c\in B(\sA) }} \sX (c)\otimes_{\protect\sA(c,c)}\sY(c).\] 
\end{lem}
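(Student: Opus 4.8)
The plan is to compute the coequalizer that defines $\sX\odot\sY$ and show it decomposes as stated. Regarding $\sX$ as a right $\sA$-module and $\sY$ as a left $\sA$-module, $\sX\odot\sY$ is the coequalizer in $\Ch_R$ of the two actions of $\sA$,
\[
\bigoplus_{b,b'\in\ob\sA}\sX(b)\otimes\sA(b',b)\otimes\sY(b')\ \rightrightarrows\ \bigoplus_{b\in\ob\sA}\sX(b)\otimes\sY(b),
\]
where the map $\nu$ uses the contravariant action of $\sA(b',b)$ on $\sX$ and lands in the $b'$-summand, and $\mu$ uses the action on $\sY$ and lands in the $b$-summand. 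Since $\Ch_R$ is additive, this coequalizer is the cokernel of $\mu-\nu$.

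First I would discard the summands with $b\not\cong b'$. When $b$ and $b'$ are not isomorphic, every morphism $b'\to b$ is a non-isomorphism, so by \autoref{trivialalg} both $\sX$ and $\sY$ annihilate all of $\sA(b',b)$; hence $\mu$ and $\nu$ both vanish on $\sX(b)\otimes\sA(b',b)\otimes\sY(b')$ and that summand does not affect the cokernel. On the surviving summands $\mu$ and $\nu$ respect the splitting of $\bigoplus_{b}\sX(b)\otimes\sY(b)$ along isomorphism classes, since each connects only the $b$- and $b'$-summands and now $b\cong b'$. Therefore $\sX\odot\sY\cong\bigoplus_{c\in B(\sA)}C_c$, where $C_c$ is the cokernel of the analogous map with all indices restricted to the objects isomorphic to $c$.

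It then remains to identify $C_c$ with $\sX(c)\otimes_{\sA(c,c)}\sY(c)$. The composite $\sX(c)\otimes\sY(c)\hookrightarrow\bigoplus_{b\cong c}\sX(b)\otimes\sY(b)\twoheadrightarrow C_c$ coequalizes the two actions of $\sA(c,c)$ on $\sX(c)$ and $\sY(c)$ — these relations appear among those defining $C_c$, namely in the summand indexed by $b=b'=c$ — so it factors canonically through a map $\iota\colon\sX(c)\otimes_{\sA(c,c)}\sY(c)\to C_c$. To invert $\iota$, choose for each $b\cong c$ an isomorphism $\phi_b\colon b\to c$ in $\sA$ with $\phi_c=\id_c$ (note $\sX$ and $\sY$ still carry isomorphisms to isomorphisms), and define $r\colon\bigoplus_{b\cong c}\sX(b)\otimes\sY(b)\to\sX(c)\otimes_{\sA(c,c)}\sY(c)$ on the $b$-summand by $x\otimes y\mapsto[\sX(\phi_b^{-1})(x)\otimes\sY(\phi_b)(y)]$. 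Writing a morphism $g\colon b''\to b'$ between objects isomorphic to $c$ as $g=\phi_{b'}^{-1}\,h\,\phi_{b''}$ with $h=\phi_{b'}\,g\,\phi_{b''}^{-1}\in\sA(c,c)$, a short computation using only the defining relation of $\otimes_{\sA(c,c)}$ shows $r\mu=r\nu$, so $r$ descends to $\bar r\colon C_c\to\sX(c)\otimes_{\sA(c,c)}\sY(c)$. Because $\phi_c=\id_c$, $\bar r\iota$ is the identity; and $\iota\bar r$ is the identity because, for $x\otimes y$ in the $b$-summand, the element $\sX(\phi_b^{-1})(x)\otimes\phi_b\otimes y$ of $\sX(c)\otimes\sA(b,c)\otimes\sY(b)$ exhibits $[x\otimes y]=[\sX(\phi_b^{-1})(x)\otimes\sY(\phi_b)(y)]$ in $C_c$. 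Hence $\iota$ is an isomorphism and the lemma follows.

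I expect the only real care to be in the variance bookkeeping — getting the directions of $\sX(\phi_b^{\pm1})$ and of the left and right $\sA(c,c)$-actions right — together with the routine verification that $r$ is well defined; the vanishing of the cross-class summands, which is exactly where the hypothesis of being supported on isomorphisms enters, is immediate. Conceptually this is just the statement that, restricted to a single isomorphism class, $\sA$ is $\sV$-equivalent to the one-object $\sV$-category $\sA(c,c)$ and $\odot$ is invariant under such an equivalence; the argument above simply makes that concrete and choice-free at the level of $\iota$.
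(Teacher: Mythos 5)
Your proof is correct, and it rests on the same key device as the paper's: choosing, for each object $a$, an isomorphism to its representative $c\in B(\sA)$ and using these to transport between $\sX(a)\otimes\sY(a)$ and $\sX(c)\otimes\sY(c)$, with the supported-on-isomorphisms hypothesis killing the cross-class terms. The packaging, though, is genuinely different. The paper argues by the universal property of the coequalizer: it defines a map $\theta\colon\bigoplus_a\sX(a)\otimes\sY(a)\to\bigoplus_c\sX(c)\otimes_{\sA(c,c)}\sY(c)$ via $\sX(f)\otimes\sY(f^{-1})$ for chosen isomorphisms $f\colon a\to c$, checks it coequalizes, and then shows any other coequalizing map factors uniquely through the target (choosing lifts back to $\sX(c)\otimes_{\bZ}\sY(c)$). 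You instead compute the coequalizer as a cokernel, split it explicitly along isomorphism classes (with the hypothesis ensuring the cross-class summands contribute nothing), and then exhibit $\iota$ and $\bar r$ as mutually inverse maps on each piece $C_c$. Your $\bar r$ is essentially the paper's $\theta$ descended to the cokernel, and your verification $\iota\bar r=\id$ via the element $\sX(\phi_b^{-1})(x)\otimes\phi_b\otimes y$ is essentially the paper's argument that the candidate satisfies the universal property; so the two proofs exchange the roles of ``construct the map, verify the universal property'' and ``construct both maps, verify they are inverse.'' The direct-inverse route is somewhat more concrete — it lays bare exactly where the hypothesis enters (vanishing of the cross-class summands) and where the relations of $\otimes_{\sA(c,c)}$ are used ($r\mu=r\nu$) — at the modest cost of having to track the variance bookkeeping explicitly, as you noted.
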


The proof of this lemma is in \autoref{extracat}.  The idea of the proof is to 
use \autoref{trivialalg} to 
show that \[\displaystyle{ \bigoplus_{c\in B(\sA)}} \sX (c)\otimes_{\protect\sA(c,c)}\sY(c)\]
satisfies the universal property that defines $\sX \odot \sY$.   

\begin{lem}\label{explicitdual} Let $\sX $ and $\sY$ satisfy the conditions of 
\autoref{specialtensor}.
If $\sX (c)$ is dualizable as an $\sA(c,c)$-module
with dual $\sY(c)$ for each $c\in B(\sA)$ then $\sX $ is dualizable with dual $\sY$.
\end{lem}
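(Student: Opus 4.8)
�The plan is to build the duality data for $\sX$ and $\sY$ as distributors from the module-level duality data for each $\sX(c)$ over $\sA(c,c)$, and then verify the triangle identities by reducing everything to the decomposition of \autoref{specialtensor}. First I would use \autoref{specialtensor} twice: since the unit 1-cell $U_\sA$ is represented (up to isomorphism) by the functor sending $a\mapsto \sA(a,a)$ in the relevant variable, and in particular $U_\sA(c)=\sA(c,c)$ as an $\sA(c,c)$-bimodule, the lemma identifies
\[
\sX\odot\sY \;\cong\; \bigoplus_{c\in B(\sA)}\sX(c)\otimes_{\sA(c,c)}\sY(c),
\qquad
\sY\odot\sX \;\cong\; \bigoplus_{c\in B(\sA)}\sY(c)\otimes_{\sA(c,c)}\sX(c),
\]
with $U_\sA$ itself decomposing as $\bigoplus_{c\in B(\sA)}\sA(c,c)$ under the same recipe (this is where I use that $\sX$ and $\sY$ are supported on isomorphisms, so the bimodule $\sA(\cdot,\cdot)$ effectively contributes only its `diagonal blocks' $\sA(c,c)$).

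Next I would define $\eta\colon U_\sA\to \sX\odot\sY$ and $\epsilon\colon \sY\odot\sX\to U_\sA$ block by block: on the $c$-summand take $\eta_c\colon \sA(c,c)\to \sX(c)\otimes_{\sA(c,c)}\sY(c)$ to be the module-level coevaluation from \autoref{explicitdual}'s hypothesis (and zero into summands with $c\neq c'$, which makes sense because the off-diagonal blocks vanish), and similarly take $\epsilon_c\colon \sY(c)\otimes_{\sA(c,c)}\sX(c)\to\sA(c,c)$ to be the module-level evaluation. I should check that these assemble into genuine 2-cells of $\sE_{\Ch_R}$, i.e.\ that they are enriched natural transformations of distributors; this is essentially automatic because under the identifications above the $\sA$-actions on $\sX\odot\sY$, $\sY\odot\sX$ and $U_\sA$ restrict to the $\sA(c,c)$-bimodule actions on each block, and $\eta_c,\epsilon_c$ are maps of $\sA(c,c)$-bimodules by hypothesis. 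Then the two triangle identities for $(\sX,\sY,\eta,\epsilon)$ decompose as a direct sum over $c\in B(\sA)$ of the triangle identities for $(\sX(c),\sY(c),\eta_c,\epsilon_c)$ as $\sA(c,c)$-modules, which hold by assumption; one only needs to track that the associativity and unit coherence 2-cells of the bicategory $\sE_{\Ch_R}$, when restricted to a single block, agree with the corresponding structure isomorphisms for $\sA(c,c)$-modules, and that the splitting in \autoref{specialtensor} is natural enough for the unit isomorphisms $\sX\odot U_\sA\cong\sX$ and $U_\sA\odot\sY\cong\sY$ to be the evident blockwise ones.

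The main obstacle I expect is bookkeeping rather than anything deep: making precise the claim that the shadow/composition apparatus of $\sE_{\Ch_R}$ is `block-diagonal' with respect to the decomposition of \autoref{specialtensor}, and in particular identifying the composite $\sX\odot U_\sA\odot\sY$, etc., with $\bigoplus_c \sX(c)\otimes_{\sA(c,c)}\sA(c,c)\otimes_{\sA(c,c)}\sY(c)$ compatibly with the associativity constraints. Once that compatibility is in hand, the verification is just the observation that a direct sum of dualities is a duality. I would also remark that this lemma is the exact analogue, in the EI-category setting, of the groupoid statement recalled from \cite[Chapter 9]{thesis}, with the partial order on objects playing no role beyond guaranteeing (via supported-on-isomorphisms) that the off-diagonal contributions vanish.
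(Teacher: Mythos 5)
Your overall plan — build the duality data for $\sX$ blockwise from the duality data for each $\sX(c)$ and reduce the triangle identities to the module level — is the right one and matches the paper's strategy, but the execution as sketched has a genuine gap that is not just bookkeeping.

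The central error is that you apply the decomposition of \autoref{specialtensor} to objects where it does not apply. \autoref{specialtensor} computes the \emph{contracted} composite $\sX\odot\sY$, which is a plain chain complex: both $\sA$-variables are coequalized away and the result is $\bigoplus_c \sX(c)\otimes_{\sA(c,c)}\sY(c)$. It does \emph{not} apply to $\sY\odot\sX$ or to $U_\sA$, which are $\sA$-$\sA$-\emph{bimodules}: $(\sY\odot\sX)(a,b)=\sY(b)\otimes_R\sX(a)$ with no contraction over $\sA$, and $U_\sA(a,b)=\sA(b,a)$, which in an EI-category is typically nonzero for non-isomorphic $a<b$ and certainly does not split as $\bigoplus_c\sA(c,c)$. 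Consequently the types in your construction are off: the coevaluation $\eta$ should be a map of chain complexes $R\to\sX\odot\sY$ (not a 2-cell out of $U_\sA$), with components $\eta_c\colon R\to\sX(c)\otimes_{\sA(c,c)}\sY(c)$; and the evaluation is a 2-cell $\epsilon\colon\sY\odot\sX\to U_\sA$ whose components must be specified as bimodule maps $\epsilon_{a,b}\colon\sY(b)\otimes_R\sX(a)\to\sA(b,a)$ for \emph{all} pairs $(a,b)$, not just $(c,c)$ with $c\in B(\sA)$.

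This last point is where the real content lies, and it is exactly what "this is essentially automatic" glosses over. Because $\sY\odot\sX$ has nonzero components at every isomorphic pair $(a,b)$, you must extend the given $\epsilon_c\colon\sY(c)\otimes_R\sX(c)\to\sA(c,c)$ to $\epsilon_{a,b}$ by conjugating through chosen isomorphisms $h\colon a\to c$ and $g\colon b\to c$, then verify that the result is independent of those choices and that the family is natural in $a$ and $b$ — here is where the hypothesis that each $\epsilon_c$ is a map of $\sA(c,c)$-\emph{bimodules} is actually used. Once $\epsilon$ is correctly in hand, the triangle identity for $\sX$ must be checked at an arbitrary object $a$, which again forces you through the chosen isomorphism $h\colon a\to c$: the composite $\sX(a)\cong R\otimes\sX(a)\to(\sX\odot\sY)\otimes\sX(a)\to\sX\odot U_\sA\cong\sX(a)$ only collapses to the $\sA(c,c)$-module triangle identity after transporting by $\sX(h)$ and $\sX(h^{-1})$. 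None of this is automatic; it is the bulk of the proof. If you add the explicit definition of $\epsilon_{a,b}$ via a choice of representatives, a check that it is well defined and natural, and an explicit evaluation of the triangle composite at a general $a$, you will recover the paper's argument.
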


The idea of this proof is to use  \autoref{specialtensor} and the coevaluation
and evaluation maps for each $\sX (c)$ to define coevaluation and evaluation maps for $\sX $.  
This proof can also be found in \autoref{extracat}.

Another choice for $\sV$ is the symmetric monoidal category of pointed topological
spaces, $\Top_*$.   The bicategory $\sE_{\Top_*}$ has 0-cells categories enriched
in based spaces and 1-cells distributors enriched in based spaces. The 2-cells in 
$\sE_{\Top_*}$ are natural transformations enriched in $\Top_*$.  

If $\sX^\op \colon \sA\rightarrow {\Top_*}$ and $\sY\colon \sA\rightarrow {\Top_*}$ are 
enriched functors $\sX \odot \sY$ is the bar resolution $B(\sX ,\sA,\sY)$.  This is the 
geometric realization of the simplicial space with $n$-simplices 
\[\coprod_{a_0,a_1,\ldots ,a_n\in \ob \sA} \sX (a_0)\wedge \sA(a_1,a_0)_+\ldots
\wedge \sA(a_n,a_{n-1})_+\wedge \sY(a_n).\]
The definition of the shadow is similar.
If $\sZ\colon \sA\otimes \sA^\op\rightarrow {\Top_*}$ is a enriched functor,
the shadow of $\sZ$, $\sh{\sZ}$, is the cyclic bar resolution $C(\sA,\sZ)$.
This is the geometric realization of the simplicial space with $n$-simplices 
\[\coprod_{a_0,a_1,\ldots ,a_n\in \ob \sA}  \sZ (a_n,a_0)\wedge \sA(a_1,a_0)_+
\ldots \wedge \sA(a_n,a_{n-1})_+.\]

Let $\sA$ be a category enriched in based spaces.  
Let $U_{\sA}\colon \sA\otimes\sA^\op\rightarrow \Top_*$ be defined by 
$U_{\sA}(a,a')=\sA(a',a)$.  Composition in $\sA$ defines the action of 
$\sA$ and $\sA^\op$ on $U_{\sA}$.

\begin{defn} An enriched functor $\sX \colon \sA\rightarrow {\Top}_*$ is $n$-\emph{dualizable} if there is a
functor $\sY\colon \sA^{\op}\rightarrow {\Top}_*$, a map 
$\eta\colon S^n\rightarrow B(\sX ,\sA,\sY)$, and an $\sA$-$\sA$-equivariant map
$\epsilon\colon \sY\wedge \sX \rightarrow S^n\wedge U_\sA$
such that the usual triangle diagrams commute up to $\sA$-equivariant 
homotopy.
\end{defn}

We will use the ideas of \autoref{explicitdual} to produced dual pairs in 
this bicategory, but we will not prove a general characterization.

\begin{defn} If $\sX \colon \sA\rightarrow {\Top}_*$ is dualizable, 
$\sP\colon \sA\otimes \sA^{\op}\rightarrow \Top_* $ is an enriched functor 
and $f\colon \sX\rightarrow \sX\odot \sP$ is an enriched natural transformation  
the {\em trace} of $f$ is the stable homotopy class of the composite
\[\xymatrix{S^n\ar[r]^-{\eta}&
\sh{\sX\odot \sY}\ar[r]^-{f\odot \id}
&\sh{\sX\odot \sP\odot \sY}\ar[r]^\theta&
\sh{\sP\odot \sY\odot \sX}\ar[r]^-{\id \odot \epsilon}&S^n\wedge \sh{\sP}}.\] 
\end{defn}
\section{The relative Lefschetz number}\label{relidxsec}
The global and geometric relative Lefschetz numbers can both be  
described using a classical approach, but we will describe
them using duality and trace in a bicategory.  The formal structure
gives a different perspective on these invariants and 
is a starting point for the more complicated invariants
we will consider in the later sections.

\begin{defn}\label{comcat} The \emph{relative component category} 
$\rcomcat{B}{A}$ of a pair
$(B,A)$ has objects the points of $B$.
The morphisms of $\rcomcat{B}{A}$ are 
\[\rcomcat{B}{A}(x,y)=\left\lbrace \begin{array}{ll}
\ast&\mathrm{if}\,x\in B\setminus A\,\mathrm{and}\, [x]=[y]\in\pi_0(B)\\
\varnothing&\mathrm{if}\,x\in B\setminus A\,\mathrm{and}\, [x]\not=[y]\in\pi_0(B)\\
\ast&\mathrm{if}\,x,y\in A\,\mathrm{and}\, [x]=[y]\in\pi_0(A)\\
\varnothing&\mathrm{if}\,x,y\in A\,\mathrm{and}\, [x]\not=[y]\in\pi_0(A)\\
\varnothing&\mathrm{if}\, x\in A, \, y\not\in A
\end{array}\right. 
\]
The composition is defined by the rules
\[\begin{array}{ll}
\ast\circ\ast=\ast &\varnothing\circ \varnothing=\varnothing\\
\varnothing\circ\ast=\varnothing&\ast\circ\varnothing=\varnothing.
\end{array}\]
\end{defn}

For most pairs of spaces $A\subset B$ this category is an EI-category but not a groupoid.
For example, if $x\in B\setminus A$, $y\in A$, and $[x]=[y]\in \pi_0(B)$ then  $\rcomcat{B}{A}(x,y)=\ast$
and  $\rcomcat{B}{A}(y,x)=\varnothing$.  
The relative component category is similar to the
equivariant component category in \cite[I.10.3]{tomDieck}.

If $A$ and $B$ are connected and 
$B\setminus A$ is nonempty this category has two isomorphism classes of objects.

If $x\in A$, let $A(x)$ be the component of $A$ that contains $x$.  If $y\in B$, let 
$B(y)$ be the component of $B$ that contains $y$.

\begin{defn} The \emph{relative component space}, $\rcomsp{B}{A}$, of the 
pair $(B,A)$ is the functor \[\rcomcat{B}{A}^\op\rightarrow \Top_*\] 
defined by \[\rcomsp{B}{A}(x)=\left\lbrace \begin{array}{ll}
A(x)_+&\mathrm{if}\,x\in A\\
B(x)/(A\cap B(x))&\mathrm{if}\,x\not \in A
\end{array}\right. 
\] The morphisms $A(x)\rightarrow A(x)$ and $B(x)/ (A\cap B(x))\rightarrow
B(x) / (A\cap B(x))$ are the identity maps.  The map $A(x)\rightarrow B(x)/ (A\cap B(x))$
is the inclusion of $A(x)$ as the base point.
\end{defn}

Recall that $C(A\cap B(x))$ is the cone on $A\cap B(x)$.  The base point is the cone
point.  Since $A\subset B$ is assumed to be a cofibration $B(x)\cup C(A\cap B(x))$ is 
homotopy equivalent to $B(x)/(A\cap B(x))$.

\begin{lem}\label{comspdual} If $A$ and $B$ are both compact ENR's or closed  smooth 
manifolds then ${\rcomsp{B}{A}}$ is dualizable.
\end{lem}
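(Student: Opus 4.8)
The plan is to reduce the statement to the already-established explicit‐dual machinery, \autoref{explicitdual}, by verifying that $\rcomsp{B}{A}$ is supported on isomorphisms and that its values at the two representative objects are dualizable over the relevant endomorphism ``rings'' (here, ``rings'' enriched in $\Top_*$ in the sense of the last bicategory of \autoref{bicatdist}). First I would record that $\rcomcat{B}{A}$ is an EI-category, so $B(\rcomcat{B}{A})$ consists (when $A$, $B$ connected and $B\setminus A\neq\varnothing$) of two classes: one object $a\in A$ and one object $b\in B\setminus A$, with $\rcomcat{B}{A}(a,a)=\ast=\rcomcat{B}{A}(b,b)$, $\rcomcat{B}{A}(a,b)$ possibly $\ast$, and $\rcomcat{B}{A}(b,a)=\varnothing$. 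The endomorphism spaces are all trivial (a point), so dualizability of $\rcomsp{B}{A}(c)$ over $\rcomcat{B}{A}(c,c)$ is just dualizability as a based space in the sense of the $n$-dualizability definition for $\Top_*$: we need $A(a)_+$ and $B(b)/(A\cap B(b))$ to be $n$-dualizable based spaces.

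Next I would invoke \autoref{topduals} to supply these space-level duals. For the object $a$, $A(a)$ is a component of $A$, hence itself a compact ENR (or closed smooth manifold), so $A(a)_+$ is dualizable with the dual given by part (1) or (3) of \autoref{topduals}. For the object $b$, $B(b)/(A\cap B(b))$ is homotopy equivalent (via the cofibration hypothesis, as noted just before the lemma) to $B(b)\cup C(A\cap B(b))$, the mapping cone of the inclusion $A\cap B(b)\hookrightarrow B(b)$; this is dualizable by part (2) or (4) of \autoref{topduals}, with dual the corresponding cone on the complement / Thom spaces. So each value of $\rcomsp{B}{A}$ on $B(\rcomcat{B}{A})$ is $n$-dualizable as a based space; I will take $n$ large enough (a single common embedding dimension) to make all the dualities simultaneous.

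The key remaining point — and the main obstacle — is that the abstract criterion \autoref{explicitdual} was stated for $\sV=\Ch_R$, whereas here we need the analogous statement for $\sV=\Top_*$, where $\odot$ is a bar construction and the triangle identities hold only up to $\sA$-equivariant homotopy rather than on the nose. So I would not literally quote \autoref{explicitdual}; instead I would follow ``the idea of its proof'' as flagged in the text: using the topological analogue of \autoref{specialtensor} (which splits $\sX\odot\sY$, up to homotopy, as a wedge/coproduct over $B(\sA)$ of the balanced bar constructions $B(\sX(c),\sA(c,c),\sY(c))$ — here $\sA(c,c)=\ast$ so these are just smash products $\sX(c)\wedge\sY(c)$), assemble the coevaluations $\eta_c\colon S^n\to \sX(c)\wedge \sY(c)$ into $\eta$ and the evaluations $\epsilon_c$ into an $\sA$-$\sA$-equivariant $\epsilon$, and then check the two triangle identities hold up to the required equivariant homotopy. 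The honest work is this last check: because $\rcomcat{B}{A}(b,a)$ can be nonempty while $\rcomcat{B}{A}(a,b)$ is empty (the non-groupoid phenomenon), the equivariance conditions for $\epsilon$ and the homotopies must respect this one-directional morphism; I would verify that the space-level homotopies from \autoref{topduals}, being natural in the obvious sense, are compatible with the single non-identity structure map and hence glue to the needed $\sA$-equivariant homotopies. The cofibration hypothesis on $A\subset B$ is what lets me replace $B(x)/(A\cap B(x))$ by the mapping cone throughout without disturbing these homotopies. Finally, if $A$ or $B$ is disconnected, the argument runs componentwise over all of $B(\rcomcat{B}{A})$ with no change.
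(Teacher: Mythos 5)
Your proposal is essentially the paper's own proof: compute $B(\rcomcat{B}{A})$, take the space-level duals from \autoref{topduals} (using the cofibration hypothesis to replace $B(b)/(A\cap B(b))$ by the mapping cone), observe the bar construction collapses to a wedge since the endomorphism spaces are trivial, assemble the coevaluations via a pinch map to the wedge of spheres, note that two of the four components of $\epsilon$ are forced to be trivial by the one-directional morphism, and check the triangle identities componentwise. One small slip: you first write that $\rcomcat{B}{A}(a,b)$ may be $\ast$ and $\rcomcat{B}{A}(b,a)=\varnothing$, which has the direction of the non-identity morphism reversed (as your later sentence correctly has it, the morphism goes from $b\in B\setminus A$ to $a\in A$, never the other way).
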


\begin{rmk} Starting with the proof of this theorem we will focus on the case of closed
smooth manifolds.  The results in this section and Sections \ref{geosec} and \ref{algsec} 
have versions for compact ENR's as well.  
The statements and proofs for compact ENR's are very similar to those for closed 
smooth manifolds.

Some of the results in 
\autoref{converse} have only been shown for manifolds.  
\end{rmk}

\begin{proof}
Define a functor $D({\rcomsp{B}{A}})\colon \rcomcat{B}{A}\rightarrow
\Top_*$ by 
 \[D(\rcomsp{B}{A})(x)=\left\lbrace \begin{array}{ll}
D(A(x)_+)&\mathrm{if}\,x\in A\\
D(B(x)\cup C (A\cap B(x)))&\mathrm{if}\,x\not \in A
\end{array}\right. 
\]  where $D(A(x)_+)$ and $D(B(x)\cup C (A\cap B(x)))$ denote the 
duals of $A(x)_+$ and $B(x)\cup C (A\cap B(x))$ with respect to an embedding 
of $B$ in $\mathbb{R}^n$ as described in 
\autoref{topduals}.  The morphisms are the identity maps or the inclusion.

To simplify notation, consider the case where $A$ and $B$ are both connected.
The general case is similar.  
The evaluation for this dual pair is a natural transformation 
\[\epsilon\colon D({\rcomsp{B}{A}})\odot {\rcomsp{B}{A}}
\rightarrow S^n\wedge (\rcomcat{B}{A})_+.\]
Let $x\in A$ and $y\in B\setminus A$ represent the isomorphism classes of
objects of $\rcomcat{B}{A}$.  Then $\epsilon$ consists of 
four maps 
\[ D(B\cup C (A)) \wedge \left(B/A\right)
\rightarrow S^n\]
\[D(A_+)\wedge \left(B/A\right)\rightarrow S^n\]
\[D(B\cup C (A))\wedge A_+\rightarrow *\]
\[D(A_+)\wedge A_+\rightarrow S^n\]
By naturality, the second map must be the constant map to a point.  Since 
$A_+$ and $B\cup C (A)$ are both dualizable the evaluations for 
these dual pairs are the first and fourth maps.

Note that $B(\rcomsp{B}{A}, \rcomcat{B}{A}, D(\rcomsp{B}{A}))$
is equivalent to  \[(A_+\wedge D(A_+))\vee  [(B/A)
\wedge D(B\cup CA)].\] 
The dualizability of $A_+$ and $B\cup C (A)$ provide coevaluation
maps \[\eta_A\colon S^n\rightarrow A_+\wedge D(A_+) \]
\[\eta_{B/A}\colon S^n\rightarrow   B/A\wedge D(B\cup C (A)).\]
The coevaluation for this dual pair is the composite
\[\xymatrix{S^n\ar[r]^-\triangle&S^n\vee S^n\ar[d]^-{\eta_A\vee\eta_{B/A}}\\
&(A_+\wedge D(A_+))\vee  [(B/A)
\wedge D(B\cup CA_+)].}\]

Verifying that these maps describe a dual pair can be checked on $x$ and $y$ separately.
The conditions reduce to conditions checked for  \autoref{topduals}.
\end{proof} 

Let $\rcomcatf{B}{A}$ be the functor \[\rcomcat{B}{A}\times
\rcomcat{B}{A}^\op\rightarrow \Top_*\] defined by 
$\rcomcatf{B}{A}(x,y)=\rcomcat{B}{A}(f(y),x)_+$.  The left action is 
composition.  The right action is given by applying $f$ and then the composition.

A relative map $f\colon (B,A)\rightarrow (B,A)$ induces a natural
transformation \[\rcomspm{f}\colon \rcomsp{B}{A}\rightarrow \rcomsp{B}{A}\odot 
\rcomcatf{B}{A}.\]
Since ${\rcomsp{B}{A}}$ is dualizable, the trace of $\rcomspm{f}$ is defined.

\begin{defn} The \emph{relative geometric  Lefschetz number} of $f$, $\relidx{f}{B}{A}$, 
is the trace of $\rcomspm{f}$.
\end{defn}

The relative geometric Lefschetz number is the stable homotopy class of a map 
\[S^0\rightarrow \sh{\rcomcatf{B}{A}}\] and so it is an element of the 
$0^{th}$ stable homotopy group of $\sh{\rcomcatf{B}{A}}$.  This group
is denoted $\pi_0^s(\sh{\rcomcatf{B}{A}})$.  It is the free abelian group
on the set $\sh{\rcomcatf{B}{A}}$.  Since the relative geometric Lefschetz number is  
defined to be the trace of $\rcomspm{f}$ it is an invariant of the relative homotopy 
class of $f$.

If $\comcat{A}$ is the set of component of $A$, $\sh{\comcatf{A}}$ 
is \[\left\lbrace [x]\in \pi_0(A)|[f|_A(x)]=[x]\right\rbrace.\]

If $X$ is a closed smooth manifold, $f\colon X\rightarrow X$ is a continuous map and 
$F$ is an isolated subset of the set of fixed points of $f$ let $i(F,f)$ be the sum of the 
fixed point indices of the fixed points in $F$.  See \cite{Doldbook} for the definition of the 
fixed point index of an isolated set of fixed points.

\begin{lem}\label{splitcomponents} \label{splitidx}
There is an isomorphism 
\[\sh{\rcomcatf{B}{A}}\cong \sh{\comcatf{A}} \amalg \sh{\comcatf{B}}\]
and the image 
of $\relidx{f}{B}{A}$ under this isomorphism 
is \[\sum_{x\in\sh{\comcatf{A}}}i(\mathrm{Fix}(f)\cap A(x), f)[x]
+\sum_{y\in\sh{\comcatf{B}}} i(\mathrm{Fix}(f)\cap (B(y)\setminus A), f))[y].\]
\end{lem}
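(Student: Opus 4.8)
The plan is to decompose the shadow $\sh{\rcomcatf{B}{A}}$ objectwise over the isomorphism classes of the relative component category, and then to identify the trace of $\rcomspm{f}$ with a sum of classical fixed point indices using the functoriality of the trace (\autoref{tracefunctor}).

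First I would establish the isomorphism $\sh{\rcomcatf{B}{A}} \cong \sh{\comcatf{A}} \amalg \sh{\comcatf{B}}$. The functor $\rcomcatf{B}{A}$ is supported on isomorphisms in the sense of \autoref{trivialalg}, since morphisms in $\rcomcat{B}{A}$ that are not isomorphisms are exactly the ``one-way'' morphisms from a $B\setminus A$ component to an $A$ component, and the construction only records whether $f$ lands back in the same component class. So \autoref{specialtensor} (in its shadow form, i.e. the version of the cyclic bar construction for functors supported on isomorphisms) reduces the shadow to a disjoint union indexed by $B(\rcomcat{B}{A})$; choosing representatives $x\in A$ and $y\in B\setminus A$ for the (at most two) isomorphism classes, the $x$-summand contributes $\sh{\comcatf{A}}$ and the $y$-summand contributes the part of $\sh{\comcatf{B}}$ coming from components meeting $B\setminus A$ — and since every component of $B$ meets $B\setminus A$ (as $A\subsetneq B$), this is all of $\sh{\comcatf{B}}$. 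The general (non-connected) case is a coproduct of this over pairs of components, as in the proof of \autoref{comspdual}.

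Next, for the index computation, I would apply \autoref{tracefunctor} to the functor that forgets the relative structure. There are two projections: one sends the pair $(B,A)$ to the space $A$ (restricting $f$ to $f|_A$), and one sends $(B,A)$ to $B$ (with $f$ itself). Under the first, $\rcomspm{f}$ is carried to the natural transformation computing the ordinary Reidemeister-type trace of $f|_A$ on $A$; under the second, to that of $f$ on $B$. Because $\rcomsp{B}{A}$ restricted to the object $x\in A$ is $A(x)_+$ with its manifold dual (and on $y\in B\setminus A$ is $B(y)/(A\cap B(y))$), and because the trace of the $S^n$-duality data for a manifold is exactly the fixed point index (\autoref{topduals} and the discussion of the fixed point index as a trace in the stable homotopy category), the $x$-component of the trace is $\sum_x i(\Fix(f)\cap A(x),f)[x]$. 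For the $y$-component, the space $B(y)\cup C(A\cap B(y))$ is the mapping cone of the inclusion, and tautness of $f$ (fixed points near $A$ are pushed into $A$) means the fixed points contributing to the cone's index are precisely those in $B(y)\setminus A$, giving $\sum_y i(\Fix(f)\cap(B(y)\setminus A),f)[y]$. The additivity of the fixed point index over the decomposition of $\Fix(f)$ into the parts in $A$ and the parts in $B\setminus A$ (valid because $A$ is a cofibration and $f$ is taut, so these sets are separated) makes the two computations consistent with the single trace $\relidx{f}{B}{A}$.

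The main obstacle I expect is the $y$-component: one must carefully verify that the trace of the cone-dual pair $(B(y)\cup C(A\cap B(y)),\, T\nu_B\cup CT\nu_A)$, with the induced self-map, equals the fixed point index of $f$ on $B(y)$ relative to $A$ — i.e. counting only fixed points in $B(y)\setminus A$. This requires unwinding the explicit coevaluation and evaluation from the proof of \autoref{comspdual} (the wedge-sum splitting $B(\rcomsp{B}{A},\rcomcat{B}{A},D(\rcomsp{B}{A}))\simeq (A_+\wedge D(A_+))\vee[(B/A)\wedge D(B\cup CA)]$ and the diagonal coevaluation) and checking that the contribution of the cone piece, after the $\theta$-shift in the trace, collapses the $A$-fixed points to the basepoint — which is exactly where tautness is used. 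Once the explicit duality data is in hand, this is a diagram chase combined with the classical additivity and the identification of the trace in the stable homotopy category with the index.
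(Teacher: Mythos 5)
Your isomorphism argument and the paper's arrive at the same splitting, but by routes with different amounts of justification, and your index argument diverges from the paper's more substantially; there are also a couple of gaps you should address.

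On the splitting of the shadow: the paper argues directly from the coequalizer definition. The term indexed by $(x,y)$ in the coequalizer is $\rcomcat{B}{A}(x,y)\times \rcomcat{B}{A}(f(y),x)$. When $x\in A$ and $y\notin A$ the first factor is empty; when $x\notin A$ and $y\in A$ the second factor is empty because $f(y)\in A$ (here relativity of $f$ is essential) and $\rcomcat{B}{A}(f(y),x)=\varnothing$ when $f(y)\in A$ and $x\notin A$. Your route instead invokes ``\autoref{specialtensor} in its shadow form.'' This is not a result the paper proves: \autoref{specialtensor} and \autoref{explicitdual} are stated only for $\Ch_R$-valued functors and only for $\odot$, not for $\sh{-}$, and the paper explicitly declines to prove a general characterization in the $\Top_*$ case. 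Moreover the hypothesis ``supported on isomorphisms'' (\autoref{trivialalg}) does not literally apply to the bimodule $\rcomcatf{B}{A}$; what actually makes the shadow split is the vanishing of both cross-terms above, which is a more specific computation. Your conclusion is right, but you are leaning on a lemma that does not exist in the form you need, so you should just do the coequalizer check.

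On the index computation: the paper does \emph{not} use \autoref{tracefunctor} here, and the ``projection functors'' you describe are not set up as shadow functors of bicategories in a way that would let you apply it. In particular, the image of $\relidx{f}{B}{A}$ in the second summand is not the fixed point index of $f$ on $B$, it is the trace of the induced self-map $f/A$ of $B/A$, which counts only fixed points of $f$ in $B\setminus A$; a projection ``to $B$ with $f$ itself'' would give the wrong answer. The paper's argument is that the isomorphism is compatible with the explicit duality data built in \autoref{comspdual}, so the first coordinate of the trace is the index of $f|_A$ and the second is the index of $f/A$. You do eventually get to this splitting when you discuss the wedge-sum decomposition, which is the right mechanism. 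Finally, the reason the cone point does not contribute is not tautness per se but the standard fact that the basepoint of a well-pointed space contributes index zero (the paper cites \cite[III.8.5]{LMS}); tautness is used to identify $i(\mathrm{Fix}(f)\cap A,f)$ with $i(\mathrm{Fix}(f)\cap A,f|_A)$ and to make the additivity statement you appeal to at the end, which is stated in the paper in the remark following the lemma rather than inside the proof. So: fix the justification for the splitting, drop the appeal to \autoref{tracefunctor} in favor of the explicit compatibility with \autoref{comspdual}, and reassign the roles of tautness versus the basepoint-index fact.
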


Since $f$ is taut $i(\mathrm{Fix}(f)\cap A, f)=i(\mathrm{Fix}(f)\cap A,f|_A)$ and 
\[i(\mathrm{Fix}(f)\cap (B\setminus A), f)=i(\mathrm{Fix}(f),f)-i(\mathrm{Fix}(f)\cap A,
f|_A).\]   The first equality follows from commutativity of the index and 
the definition of a taut map. See \cite[3.5]{Zhao05} for a proof of the second equality.

\begin{proof} Assume $A$ and $B$ are connected and $A$ is a proper 
subset of $B$.  These assumptions restrict the number of 
components of $\sh{\rcomcatf{B}{A}}$.  The proof is similar for the general
case.

The set $\sh{\rcomcatf{B}{A}}$ is defined to be the coequalizer
\[\xymatrix{\protect\coprod_{x,y} {\rcomcat{B}{A}}(x,y)_+\wedge 
\protect{\rcomcatf{B}{A}}(y,x)\ar@<.5ex>[r]\ar@<-.5ex>[r]&
\protect\coprod_x {\rcomcatf{B}{A}}(x,x)\ar[r]&\sh{\rcomcatf{B}{A}}}.\]
Since $\rcomcat{B}{A}(x,y)$ is empty if $x\in A$ and $y\not\in A$, 
this coequalizer splits as two coequalizers.  One is over pairs $(x,y)$
where $x,y\in A$ and the other is over pairs $(x,y)$ where $x,y \not\in A$.
Each of these coequalizers consists of a single element.

This isomorphism  is compatible with 
\autoref{comspdual} so the image of $\relidx{f}{B}{A}$ under
the projection to the first summand is the trace of $f$ restricted to $A$.  
As observed after \autoref{topduals} this is the 
fixed point index of $f|_A$.

The image of $\relidx{f}{B}{A}$ under the projection to the second summand
is the trace of $f/A\colon B/A\rightarrow B/A$. 
The fixed points of $f/A$ are the fixed points of $f|_{B\setminus A}$ and the 
point that represents $A$.  The point that represents $A$ is the base point and so 
its index does not contribute to the trace of $f/A$, see \cite[III.8.5]{LMS}.
\end{proof}

The second component of $\relidx{f}{B}{A}$ is the index defined in 
\cite[1.1]{Jezierski95}.

\begin{eg} Let $J$ be a nonempty, proper, connected subinterval of $S^1$.  Let 
$f\colon (S^1,J)\rightarrow (S^1,J)$ be the identity map.  Then $\relidx{f}{S^1}{J}
=(1,-1)$.
\end{eg}

\begin{cor}\label{idxzero} If $f\colon (B,A)\rightarrow (B,A)$ has no fixed points then 
$\relidx{f}{B}{A}=0$.
\end{cor}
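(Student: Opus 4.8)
The plan is to deduce \autoref{idxzero} directly from \autoref{splitidx} together with the fixed point index vanishing on the empty set of fixed points. First I would note that if $f\colon (B,A)\rightarrow (B,A)$ has no fixed points, then in particular $\mathrm{Fix}(f)=\varnothing$, so for every component $A(x)$ of $A$ and every component $B(y)$ of $B$ we have $\mathrm{Fix}(f)\cap A(x)=\varnothing$ and $\mathrm{Fix}(f)\cap(B(y)\setminus A)=\varnothing$. By the standard normalization property of the fixed point index (the index of a map on an open set containing no fixed points is zero; see \cite{Doldbook}), each term $i(\mathrm{Fix}(f)\cap A(x),f)$ and $i(\mathrm{Fix}(f)\cap(B(y)\setminus A),f)$ is zero.

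Next I would invoke \autoref{splitidx}, which identifies the image of $\relidx{f}{B}{A}$ under the isomorphism $\sh{\rcomcatf{B}{A}}\cong\sh{\comcatf{A}}\amalg\sh{\comcatf{B}}$ with the explicit sum
\[
\sum_{x\in\sh{\comcatf{A}}}i(\mathrm{Fix}(f)\cap A(x),f)[x]+\sum_{y\in\sh{\comcatf{B}}}i(\mathrm{Fix}(f)\cap(B(y)\setminus A),f)[y].
\]
Since every coefficient in this sum vanishes by the previous step, the element is zero in the free abelian group $\pi_0^s(\sh{\rcomcatf{B}{A}})$, and hence $\relidx{f}{B}{A}=0$. (Alternatively, one could argue more directly that $\rcomspm{f}$ is, up to homotopy, built from the maps $f|_A$ and $f/A$, each of which has fixed point index zero by the classical Lefschetz fixed point theorem applied componentwise, since neither has fixed points other than possibly the base point which does not contribute; but routing through \autoref{splitidx} is cleanest.)

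There is essentially no obstacle here: this is a formal corollary. The only point requiring a word of care is that \autoref{splitidx} as stated already presupposes $f$ is taut, but this is harmless since, as explained in the Preliminaries, every relative map is implicitly replaced by a relatively homotopic taut map, and $\relidx{f}{B}{A}$ depends only on the relative homotopy class; moreover if the original $f$ has no fixed points the index contributions are zero regardless of tautness. So the argument is two lines: plug $\mathrm{Fix}(f)=\varnothing$ into the formula of \autoref{splitidx}.

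\begin{proof}
If $f$ has no fixed points then $\mathrm{Fix}(f)=\varnothing$, so for every $x\in\sh{\comcatf{A}}$ and $y\in\sh{\comcatf{B}}$ the sets $\mathrm{Fix}(f)\cap A(x)$ and $\mathrm{Fix}(f)\cap(B(y)\setminus A)$ are empty. The fixed point index of a map with no fixed points in the relevant set is zero \cite{Doldbook}, so every coefficient in the formula of \autoref{splitidx} vanishes. Hence $\relidx{f}{B}{A}=0$.
\end{proof}
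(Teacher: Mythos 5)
Your overall strategy — combining \autoref{splitidx} with the vanishing of the fixed point index on the empty set — is the same as the paper's, and it would be a complete proof if one could directly apply the formula of \autoref{splitidx} to $f$. But that formula requires $f$ to be taut, and a fixed-point-free map need not be taut. If $f$ is not taut, $\relidx{f}{B}{A}$ is computed by first replacing $f$ with a relatively homotopic taut map $g$, and $g$ may well have fixed points. So one cannot simply plug $\mathrm{Fix}(f)=\varnothing$ into the formula; one must argue that the coefficients $i(\mathrm{Fix}(g)\cap A(x),g)$ and $i(\mathrm{Fix}(g)\cap(B(y)\setminus A),g)$ vanish even though $\mathrm{Fix}(g)$ may be nonempty.

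The paper's proof does exactly this extra work: it chooses the taut replacement $g$ with $g|_A=f|_A$, so that $\mathrm{Fix}(g)\cap A=\varnothing$ kills the $A$-contributions; then it uses the homotopy invariance of the total index, $i(\mathrm{Fix}(g),g)=i(\mathrm{Fix}(f),f)=0$, together with the tautness identity $i(\mathrm{Fix}(g)\cap(B\setminus A),g)=i(\mathrm{Fix}(g),g)-i(\mathrm{Fix}(g)\cap A,g|_A)$, to kill the $(B\setminus A)$-contributions. Your pre-proof discussion does notice the tautness issue, but the sentence asserting that ``the index contributions are zero regardless of tautness'' is precisely the claim that needs this argument; asserting it and then writing the two-line proof leaves the gap in place. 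Apart from this, the decomposition, the appeal to \autoref{splitidx}, and the normalization of the index are all in line with the paper.
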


\begin{proof}  Since $f$ has no fixed points $i(\mathrm{Fix}(f),f)=0$.  To compute 
the relative geometric Lefschetz number 
 of $f$ we replace $f$ by a relatively homotopic map $g$ that is taut.  The map $g$
can be chosen so that $f|_A=g|_A$.  Then $i(\mathrm{Fix}(g),g)=0$ and 
$i(\mathrm{Fix}(g)\cap A, g|_A)=i(\mathrm{Fix}(f)\cap A, f|_A)=0$.

Since $f$ has no fixed points and $f|_A=g|_A$, $i(\mathrm{Fix}(g)\cap A, g)=0$.  
Since $g$ is taut 	\[i(\mathrm{Fix}(g)\cap (B\setminus A), g)=i(\mathrm{Fix}(g),g)-
i(\mathrm{Fix}(g)\cap A,g|_A)=0.\] 
\end{proof}

Let $\bZ\rcomcat{B}{A}$ be the category with the same objects as $\rcomcat{B}{A}$.
For objects $x$ and $y$ of $\rcomcat{B}{A}$ \[\bZ\rcomcat{B}{A}(x,y)\] is the free abelian group
on $\rcomcat{B}{A}(x,y)$.
Composing $\rcomsp{B}{A}$ with the rational homology functor
defines a functor \[H_*(\rcomsp{B}{A})\colon \bZ\rcomcat{B}{A}\rightarrow 
\Ch_\bQ.\]  

\begin{prop}\label{algspacedual} If $A\subset B$ are  closed smooth manifolds, 
then $H_*(\rcomsp{B}{A})$ is dualizable.
\end{prop}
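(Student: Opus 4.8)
The plan is to derive this from the EI-category machinery of \autoref{specialtensor} and \autoref{explicitdual}, carrying the argument of \autoref{comspdual} one step further along the rational homology functor. The first point to check is that $H_*(\rcomsp{B}{A})$ is supported on isomorphisms in the sense of \autoref{trivialalg}. The only non-invertible morphisms of $\rcomcat{B}{A}$ are those from a point $y\in B\setminus A$ to a point $x\in A$ lying in the same component of $B$, and by construction $\rcomsp{B}{A}$ sends such a morphism to the map $A(x)_+\to B(y)/(A\cap B(y))$ that collapses $A(x)_+$ onto the base point. This map is zero on reduced rational homology, so $H_*(\rcomsp{B}{A})$ sends every non-isomorphism to the zero chain map. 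For the candidate dual I would take $H_*(D(\rcomsp{B}{A}))$, where $D(\rcomsp{B}{A})$ is the space-level dual built in the proof of \autoref{comspdual}; its one non-identity structure map is, up to the homotopy equivalence $B(c)/(A\cap B(c))\simeq B(c)\cup C(A\cap B(c))$, the Spanier--Whitehead dual of that collapse, hence nullhomotopic, so $H_*(D(\rcomsp{B}{A}))$ is supported on isomorphisms as well.

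Next I would record the fiberwise picture. Fix a set of representatives for the isomorphism classes of objects of $\rcomcat{B}{A}$; for each such $c$ one has $\rcomcat{B}{A}(c,c)=\{\ast\}$, so the endomorphism ring $\bZ\rcomcat{B}{A}(c,c)$ is $\bZ$ and its action on $H_*(\rcomsp{B}{A})(c)$ factors through $\bQ$. The value $H_*(\rcomsp{B}{A})(c)$ is either $H_*(A(c);\bQ)$ or $H_*(B(c)/(A\cap B(c));\bQ)$, and since $A$ and $B$ are closed smooth manifolds these are bounded graded $\bQ$-vector spaces, finite dimensional in each degree; such complexes are dualizable in $\Ch_\bQ$, the dual being the levelwise $\bQ$-linear dual, which is exactly $H_*(D(\rcomsp{B}{A}))(c)$. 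More structurally, $A(c)_+$ and $B(c)\cup C(A\cap B(c))$ are dualizable spaces by \autoref{topduals}(1) and (2), and the rational homology functor is strong symmetric monoidal, so \autoref{symtracefunctor} both gives the dualizability of $H_*(\rcomsp{B}{A})(c)$ and identifies its dual with the rational homology of the Thom-space dual $D(\rcomsp{B}{A})(c)$.

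With these two inputs the conclusion is immediate from \autoref{explicitdual} (applied with rational coefficients): $H_*(\rcomsp{B}{A})$ and $H_*(D(\rcomsp{B}{A}))$ are supported on isomorphisms and the latter is a fiberwise dual of the former, so $H_*(\rcomsp{B}{A})$ is dualizable with dual $H_*(D(\rcomsp{B}{A}))$. Unwinding this, \autoref{specialtensor} identifies $H_*(\rcomsp{B}{A})\odot H_*(D(\rcomsp{B}{A}))$ with $\bigoplus_c H_*(\rcomsp{B}{A})(c)\otimes_{\bZ\rcomcat{B}{A}(c,c)}H_*(D(\rcomsp{B}{A}))(c)$, and the coevaluation and evaluation are assembled from the fiberwise ones, the triangle identities being checked one representative at a time --- the algebraic shadow of the wedge-decomposition argument in the proof of \autoref{comspdual}. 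I expect the only step with genuine content, beyond this bookkeeping, to be the support-on-isomorphisms check: that the connecting morphism coming from the inclusion $A\subset B$ acts by zero after passing to rational homology. Everything else is formal once \autoref{explicitdual} is available; alternatively one can bypass it and simply apply the rational homology functor, which is strong symmetric monoidal, to the maps $\eta$ and $\epsilon$ constructed in the proof of \autoref{comspdual}.
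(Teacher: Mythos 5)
Your proposal is correct and follows the same two routes the paper sketches: the main argument invokes \autoref{explicitdual} after checking that $H_*(\rcomsp{B}{A})$ (and its candidate dual) are supported on isomorphisms and that each fiber $H_*(A(c))$, $H_*(B(c),A\cap B(c))$ is a finite-dimensional dualizable chain complex, and your closing remark about applying the strong symmetric monoidal rational homology functor to the dual pair from \autoref{comspdual} is the paper's first route via \autoref{tracefunctor}. You have simply filled in the details the paper leaves implicit, in particular the verification that the collapse $A(c)_+\to B(c)/(A\cap B(c))$ (and its dual) induces zero on reduced rational homology.
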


\begin{proof}
There are two ways to prove this proposition.  First, the rational homology functor 
is strong symmetric monoidal, so this follows from \autoref{tracefunctor}.

We can also show $H_*(\rcomsp{B}{A})$ is dualizable directly by describing
the coevaluation and evaluation.  The functor $H_*(\rcomsp{B}{A})$ is 
supported on isomorphisms and so it is enough to construct a coevaluation 
and evaluation for the chain complexes of vector spaces $H_*(A)$ and $H_*(B,A)$.  These 
are both finite dimensional, and so they are both dualizable with 
duals as in \autoref{symmetricmonoidal}.
\end{proof}

A relative map $f\colon (B,A)\rightarrow (B,A)$ induces a map
\[H_*(f)\colon H_*(\rcomsp{B}{A}) \rightarrow H_*(\rcomsp{B}{A})\odot \bZ\rcomcatf{B}{A}\]
by applying the rational homology functor to $\rcomspm{f}$.

\begin{defn} The \emph{relative global Lefschetz number} of $f$, $\rlnumb{f}{B}{A}$,
is the trace of $H_*(f)$.
\end{defn}

\begin{lem}  The image of $\rlnumb{f}{B}{A}$ under the isomorphism in \autoref{splitcomponents} is \[\sum_{x\in\sh{\comcatf{A}}}\alnumb{f}{A(x)}[x]+\sum_{y\in\sh{\comcatf{B}}}
\alnumb{f}{B(y)/(A\cap B(y))}[y].\]
\end{lem}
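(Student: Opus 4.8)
The plan is to mirror the proof of \autoref{splitidx} (the geometric case), replacing the stable homotopy level statement with its rational homology analogue and using the functoriality of the trace. The first step is to observe that the isomorphism
\[\sh{\rcomcatf{B}{A}}\cong \sh{\comcatf{A}}\amalg\sh{\comcatf{B}}\]
from \autoref{splitcomponents} comes from a splitting of the coequalizer defining the shadow, exactly as in that proof: since $\rcomcat{B}{A}(x,y)=\varnothing$ whenever $x\in A$ and $y\notin A$, the coequalizer splits into the piece over pairs of objects in $A$ and the piece over pairs of objects in $B\setminus A$. This splitting is compatible with the dual pair constructed in \autoref{comspdual}, and after applying the (strong symmetric monoidal) rational homology functor it is compatible with the dual pair for $H_*(\rcomsp{B}{A})$ built in \autoref{algspacedual}.

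Next I would use \autoref{tracefunctor} applied to the rational homology functor, together with the description of $H_*(\rcomsp{B}{A})$ as supported on isomorphisms, to reduce the computation of $\rlnumb{f}{B}{A}$ to a pair of traces in $\Ch_\bQ$. The first summand computes the trace of $H_*(f|_A)\colon H_*(A)\to H_*(A)$ assembled over the components of $A$: on the component $A(x)$ this is precisely the classical Lefschetz number $\alnumb{f}{A(x)}$. The second summand computes the trace of $H_*(f/A)\colon H_*(B,A)\to H_*(B,A)$ assembled over the components of $B$; restricted to the summand indexed by $B(y)$ this is the trace of $H_*(f)$ on $H_*(B(y),A\cap B(y))$, which is by definition $\alnumb{f}{B(y)/(A\cap B(y))}$. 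The decomposition of $H_*(B,A)$ as $\bigoplus_y H_*(B(y),A\cap B(y))$ along components, and the corresponding decomposition of $H_*(A)$, are exactly the block-diagonal structure one gets from a functor supported on isomorphisms, via \autoref{specialtensor} and \autoref{explicitdual}.

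The main technical point — and the step I expect to require the most care — is checking that the splitting of the shadow $\sh{\bZ\rcomcatf{B}{A}}$ as $\sh{\comcatf{A}}\amalg\sh{\comcatf{B}}$ is the one induced by applying rational homology to the splitting of $\sh{\rcomcatf{B}{A}}$ in \autoref{splitidx}, so that the two lemmas are genuinely comparing the invariants over the same indexing set. Once that identification is in place, one invokes \autoref{tracefunctor} with $F$ the rational homology functor: the hypotheses that $F(\sX)\odot F(\sY)\to F(\sX\odot\sY)$ and $U_{F(-)}\to F(U_{-})$ are isomorphisms hold because $F$ is strong symmetric monoidal and everything in sight is supported on isomorphisms and finite-dimensional. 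The commuting square of \autoref{tracefunctor} then says that $\rlnumb{f}{B}{A}$ is the image of $\relidx{f}{B}{A}$ under $H_*$, but more directly it identifies $\rlnumb{f}{B}{A}$ summand by summand with the traces of $H_*(f|_{A(x)})$ and $H_*(f|_{B(y)/(A\cap B(y))})$, which are the classical Lefschetz numbers appearing in the statement.
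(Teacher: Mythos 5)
Your proposal is correct and takes the same approach as the paper: the paper's proof is simply ``Using \autoref{algspacedual} this proof is similar to the proof of \autoref{splitidx},'' which is exactly the direct argument you outline — split the coequalizer defining the shadow, observe that the splitting is compatible with the dual pair from \autoref{algspacedual}, and identify the projection onto each summand as the classical Lefschetz number of $H_*(f|_{A(x)})$ or $H_*(f|_{B(y)/(A\cap B(y))})$. One small remark: your detour through \autoref{tracefunctor} is not actually needed for this lemma (that is the content of the \emph{next} proposition, comparing $\rlnumb{f}{B}{A}$ with $\relidx{f}{B}{A}$); here the trace is computed directly from the explicit dual pair, so the functoriality of the trace under $H_*$ is superfluous.
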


Here $\alnumb{f}{A(x)}$ and $\alnumb{f}{B(y)/(A\cap B(y))}$ denote the traces of 
the maps induced by $f$ on $H_*(A(x))$ and $H_*(B(y)/(A\cap B(y)))$. 

\begin{proof}
Using \autoref{algspacedual} this proof is similar to the proof of \autoref{splitidx}.
\end{proof}

The invariant $\alnumb{f}{B/A}$ is the relative Lefschetz number of 
\cite{Bowszyc}.

\begin{prop} In $\mathbb{Z}\sh{\rcomcatf{B}{A}}$, $\rlnumb{f}{B}{A}
=\relidx{f}{B}{A}$.
\end{prop}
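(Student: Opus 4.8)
The plan is to deduce this equality from the functoriality of the trace, \autoref{tracefunctor}, applied to the rational homology functor $H_*(-)\colon \sE_{\Top_*}\to \sE_{\Ch_\bQ}$ (restricted to the relevant subcategories of distributors supported on isomorphisms). The point is that $\relidx{f}{B}{A}$ is the trace of $\rcomspm{f}$ computed in $\sE_{\Top_*}$, while $\rlnumb{f}{B}{A}$ is by definition the trace of $H_*(f) = H_*(\rcomspm{f})$ computed in $\sE_{\Ch_\bQ}$, and $H_*(f)$ is exactly the map $\hat f$ obtained from $\rcomspm{f}$ in the statement of \autoref{tracefunctor}. So the square in \autoref{tracefunctor} identifies $H_*(\relidx{f}{B}{A})$ with $\rlnumb{f}{B}{A}$ inside $H_*(\sh{\rcomcatf{B}{A}})$, and it remains to see that applying $H_*$ to the relevant shadow and tracking the identification is compatible with the statement ``in $\bZ\sh{\rcomcatf{B}{A}}$.''

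First I would record the hypotheses needed to invoke \autoref{tracefunctor}: that $\rcomsp{B}{A}$ is dualizable (\autoref{comspdual}), that $H_*$ is a shadow functor between these bicategories, and that the relevant lax-monoidal constraint maps $H_*(\sX)\odot H_*(\sY)\to H_*(\sX\odot\sY)$, $H_*(\sX)\odot H_*(\sP)\to H_*(\sX\odot\sP)$, and $U_{H_*(-)}\to H_*(U_{-})$ are isomorphisms. Because $\rcomsp{B}{A}$, $D(\rcomsp{B}{A})$, $\rcomcatf{B}{A}$ and the unit are all supported on isomorphisms (the relative component category is an EI-category), \autoref{specialtensor} reduces $\odot$ to a finite direct sum of ordinary tensor products over the endomorphism rings $\bZ\rcomcat{B}{A}(c,c)=\bZ$ at the two representing objects, and there the constraint maps are the usual Künneth/Eilenberg–Zilber isomorphisms in nonnegative degrees for finite CW complexes; likewise for the shadow, which by \autoref{specialtensor}-type reasoning is a wedge of two based spaces and $H_*$ of a wedge is a sum. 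This is the step that carries the real content, but it is entirely of the ``routine verification via \autoref{specialtensor}'' flavor already used in the proofs of \autoref{splitidx} and \autoref{algspacedual}.

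Then I would observe that the target shadow $\sh{\bZ\rcomcatf{B}{A}}$ is the free abelian group on the finite set $\sh{\rcomcatf{B}{A}}$, and that $H_*$ applied to the based space $\sh{\rcomcatf{B}{A}}$, followed by $\pi_0^s \to H_0$, is precisely the Hurewicz-type map $\pi_0^s(\sh{\rcomcatf{B}{A}})=\bZ[\sh{\rcomcatf{B}{A}}]\to \bZ[\sh{\rcomcatf{B}{A}}]$ which is the identity (since both groups are the free abelian group on the same finite set of components, compatibly with the splitting of \autoref{splitcomponents}). Consequently the vertical map $\psi$ in the square of \autoref{tracefunctor} is, after this identification, the identity of $\bZ[\sh{\rcomcatf{B}{A}}]$, so the square says $\relidx{f}{B}{A}=\rlnumb{f}{B}{A}$ as elements of that group, which is the assertion. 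I expect the only genuine obstacle to be bookkeeping: checking that the identification of $H_*$ of the topologically-defined shadow with the algebraically-defined one $\sh{\bZ\rcomcatf{B}{A}}$ is compatible with the splitting $\sh{\comcatf{A}}\amalg\sh{\comcatf{B}}$ of \autoref{splitcomponents}, i.e. that the two ways of cutting the shadow into two pieces agree — but this again follows because both reductions are governed by the same EI-structure of $\rcomcat{B}{A}$, so the compatibility is forced.
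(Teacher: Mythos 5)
Your proof is correct and follows essentially the same route as the paper: the paper's one-line argument is exactly that the rational homology functor, being strong symmetric monoidal over $\bQ$, induces a shadow functor satisfying the hypotheses of \autoref{tracefunctor}, so the two traces agree. You have simply spelled out the verification of those hypotheses (dualizability, the lax constraints being isomorphisms via \autoref{specialtensor} and Künneth, and the identification of the two shadows), which the paper leaves implicit.
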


\begin{proof}
This proposition follows from  \autoref{tracefunctor} and the observation
that the rational homology functor is strong symmetric monoidal.
\end{proof}

The relative Lefschetz fixed point theorem follows from this proposition and 
\autoref{idxzero}.

\begin{thmA}[Relative Lefschetz fixed point theorem]
Let $A\subset B$ be closed smooth manifolds and $f\colon (B,A)\rightarrow
(B,A)$ be a relative map. If $f$ has no fixed points then $\rlnumb{f}{B}{A}
=0$.
\end{thmA}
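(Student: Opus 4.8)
The plan is to deduce Theorem A directly from the two immediately preceding results, which is exactly the structure the narrative sets up. First I would invoke \autoref{idxzero}, which says that if $f$ has no fixed points then the relative geometric Lefschetz number $\relidx{f}{B}{A}$ vanishes in $\pi_0^s(\sh{\rcomcatf{B}{A}})$. Then I would apply the preceding proposition, which identifies $\rlnumb{f}{B}{A}$ with $\relidx{f}{B}{A}$ inside $\mathbb{Z}\sh{\rcomcatf{B}{A}}$; since the relative global Lefschetz number is an element of that free abelian group and it equals an element that is zero, it is itself zero. So the proof is two sentences: combine the comparison proposition with \autoref{idxzero}.

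The only mild subtlety worth a remark is the passage between $\pi_0^s$ and $\mathbb{Z}\sh{\rcomcatf{B}{A}}$: the geometric invariant lives a priori as a stable homotopy class of a map $S^0 \to \sh{\rcomcatf{B}{A}}$, but as noted in the text this group is the free abelian group on the set $\sh{\rcomcatf{B}{A}}$, so the identification with an element of $\mathbb{Z}\sh{\rcomcatf{B}{A}}$ is canonical and zero goes to zero under it. I would make this explicit in one clause so the reader does not have to chase the coefficient conventions. There is essentially no obstacle here — the work was already done in establishing \autoref{idxzero} (which used tautness and Dold's additivity/commutativity of the fixed point index) and the comparison proposition (which used \autoref{tracefunctor} and strong symmetric monoidality of rational homology).

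A concrete write-up:

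\begin{proof}
Since $f$ has no fixed points, \autoref{idxzero} gives $\relidx{f}{B}{A}=0$ in $\pi_0^s(\sh{\rcomcatf{B}{A}})=\mathbb{Z}\sh{\rcomcatf{B}{A}}$. By the preceding proposition, $\rlnumb{f}{B}{A}=\relidx{f}{B}{A}$ in $\mathbb{Z}\sh{\rcomcatf{B}{A}}$, so $\rlnumb{f}{B}{A}=0$.
\end{proof}

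If instead a self-contained argument were wanted that did not route through the geometric invariant, I would reprove the vanishing on the homological side: by the splitting lemma the relative global Lefschetz number decomposes as $\sum_x \alnumb{f}{A(x)}[x] + \sum_y \alnumb{f}{B(y)/(A\cap B(y))}[y]$, and one would show each summand vanishes using that $i(\mathrm{Fix}(f),f)=0$ together with tautness and the relation $i(\mathrm{Fix}(f)\cap(B\setminus A),f)=i(\mathrm{Fix}(f),f)-i(\mathrm{Fix}(f)\cap A,f|_A)$ — but this just retraces the proof of \autoref{idxzero} via the Lefschetz-number-equals-index identification, so the short proof above is preferable.
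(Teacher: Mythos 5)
Your proof is exactly the paper's: the paper explicitly states that Theorem A ``follows from this proposition and \autoref{idxzero},'' i.e.\ from the identity $\rlnumb{f}{B}{A}=\relidx{f}{B}{A}$ in $\mathbb{Z}\sh{\rcomcatf{B}{A}}$ combined with the vanishing of the geometric index for fixed-point-free maps. Your remark on identifying $\pi_0^s(\sh{\rcomcatf{B}{A}})$ with the free abelian group is a harmless clarification already noted in the text.
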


Further, if $\rlnumb{f}{B}{A}\not=0$ all maps relatively homotopic 
to $f$ have a fixed point.

\section{The geometric Reidemeister trace}\label{geosec}

To prove a converse to Theorem A it is necessary to introduce refinements of 
the invariants defined in the previous section.  The first of these invariants is the 
geometric Reidemeister trace.  This is a refinement of the geometric Lefschetz number 
and it will serve as a transition between the global Reidemeister trace
in \autoref{algsec} and the invariant defined in  \autoref{converse}.

As for the invariants in the previous section, it is possible to define the geometric 
Reidemeister trace using a generalization of the standard approach of fixed point indices and fixed point
classes.  Also as in the previous section, we do not use that approach here.  Instead
we use duality and trace in bicategories with shadows.  This perspective gives 
simple comparisons of different invariants and also unifies the descriptions of 
different forms of the Reidemeister trace with the Lefschetz number.

\begin{defn}The \emph{relative fundamental category}, $\rfuncat{B}{A}$,
of the pair $(B,A)$ has objects the points of $B$.  The morphisms $\rfuncat{B}{A}(x,y)$ are the homotopy classes
of paths from $x$ to $y$ in $A$ if $x\in A$ and homotopy classes of paths from $x$ to 
$y$ in $B$ if $x\in B\setminus A$.
\end{defn}

The relative fundamental category is a subcategory of the fundamental groupoid of $B$.
In most cases it is not a groupoid.   For example, if $A$ and $B$ are both path connected,
$x\in A$, and $y\in B\setminus A$ then 
$\rfuncat{B}{A}(x,y)$ is empty and $\rfuncat{B}{A}(y,x)$ is nonempty.  This category is an EI-category.
This category is similar to the equivariant fundamental category, see \cite[I.10.7]{tomDieck}.

For $x\in A$, let $\tilde{A}_x$  be the universal cover of $A$ based at $x$.  We think of 
points in $\tilde{A}_x$ as homotopy classes of paths in $A$ that start at $x$.  For $y\in 
B\setminus A$ let $\tilde{B}_y$ be the universal cover of $B$ based at $y$.  Let $p\colon 
\tilde{B}_y\rightarrow B$ be the quotient map and 
$\bar{A}_y=p^{-1}(A)\subset \tilde{B}_y$. 

\begin{defn} The \emph{relative universal cover} of the pair $(B,A)$ is
the functor \[\runicov{B}{A}\colon \rfuncat{B}{A}^{\op}\rightarrow \Top_*\] 
defined by 
\[\runicov{B}{A}(x)=\left\lbrace \begin{array}{ll}
(\tilde{A}_x)_+&\mathrm{if} \,x\in A
\\
{\tilde{B}_x/\bar{A}_x}&\mathrm{if} \,x\not\in A
\end{array}\right. \]
on objects and by composition of paths on morphisms.  
\end{defn}

\begin{lem} If $A\subset B$ is a cofibration $\tilde{B}_x/\bar{A}_x$ is 
$\pi_1(B)$-homotopy equivalent to $\tilde{B}_x\cup C\bar{A}_x$.
\end{lem}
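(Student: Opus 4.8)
The claim is that for $A\subset B$ a cofibration, the quotient $\tilde B_x/\bar A_x$ is $\pi_1(B)$-homotopy equivalent to the mapping cone $\tilde B_x\cup C\bar A_x$. I would prove this by lifting the standard fact --- that collapsing a cofibration is homotopy equivalent to coning it off --- to the universal cover, and then checking that the equivalence respects the deck transformation action, hence is $\pi_1(B)$-equivariant. Concretely, the inclusion $\bar A_x\hookrightarrow \tilde B_x$ is itself a cofibration: $A\subset B$ being a cofibration means there is a Strøm-type retraction $B\times I\to (B\times\{0\})\cup(A\times I)$, and since $\bar A_x=p^{-1}(A)$ this retraction lifts uniquely through the covering map $p\colon\tilde B_x\to B$ to exhibit $\bar A_x\subset\tilde B_x$ as a cofibration. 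The lifted retraction is constructed by path-lifting in the fibration $p$, and by uniqueness of lifts it commutes with the $\pi_1(B)$-action on $\tilde B_x$; this equivariance is the one point that needs care but follows formally from uniqueness of lifts.

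First I would record the equivariant cofibration statement just described. Next, given any cofibration $i\colon E'\hookrightarrow E$, there is a natural map $E\cup CE'\to E/E'$ collapsing the cone, and a standard argument (see \cite[III]{LMS} or the usual treatment of cofibrations) produces a homotopy inverse built from the retraction data witnessing the cofibration; all of the maps and homotopies involved are constructed out of that retraction. Applying this with $E=\tilde B_x$, $E'=\bar A_x$, every map in sight is assembled from the lifted retraction, which is $\pi_1(B)$-equivariant, so the resulting homotopy equivalence and the homotopies witnessing it are $\pi_1(B)$-equivariant. This gives a $\pi_1(B)$-homotopy equivalence $\tilde B_x\cup C\bar A_x\simeq\tilde B_x/\bar A_x$, which is exactly the assertion. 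The based structure (cone point as base point on the mapping-cone side, image of $\bar A_x$ as base point on the quotient side) is preserved throughout.

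The only genuine obstacle is the equivariance bookkeeping: one must check that lifting the Strøm retraction of the pair $(B,A)$ along the covering map $p$ yields a retraction that is strictly equivariant for the deck group, and that the subsequent construction of the homotopy inverse and the null-homotopies does not break this. Both follow from the uniqueness of lifts of maps into a covering space once a lift of a point is fixed --- the deck transformations and the lifted retraction are both lifts of compatible maps, so they commute --- but it is the step that distinguishes this lemma from the purely non-equivariant statement. I would remark that the identical argument gives the analogous statement for $\tilde A_x$ and for the relative component space in the previous section, and in fact this lemma is the universal-cover analogue of the remark following \autoref{comcat} that $B(x)\cup C(A\cap B(x))\simeq B(x)/(A\cap B(x))$.
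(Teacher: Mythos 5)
Your approach is correct and close in spirit to the paper's, but the paper takes a more concrete route than the one you sketch. You propose to (i) lift the Str\o{}m retraction for $(B,A)$ through the covering map to show $\bar{A}_x\hookrightarrow\tilde{B}_x$ is a $\pi_1(B)$-equivariant cofibration, using uniqueness of lifts for the equivariance, and then (ii) cite the standard cofibration $\Rightarrow$ cone-collapse argument equivariantly. The paper instead stays entirely in the paths model of $\tilde{B}_x$: it takes the Str\o{}m data $(u,h)$ on the base $(B,A)$ with $u^{-1}(0)=A$, defines $\phi\colon\tilde{B}_x\cup C\bar{A}_x\to\tilde{B}_x/\bar{A}_x$ by collapsing the cone, and then writes an explicit formula for the inverse $\psi$ by concatenating a path $\gamma$ with a (suitably reparametrized) partial track of $h$ at $\gamma(1)$, feeding the remainder of $u(\gamma(1))$ into the cone coordinate when $u(\gamma(1))\le\tfrac12$. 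Equivariance of $\psi$ is then immediate by inspection---deck transformations act by precomposition and the formula only touches $\gamma$ near its endpoint---so the paper never needs to invoke uniqueness of lifts or record the equivariant-cofibration statement as a separate step. Both routes buy the same theorem; yours is more modular and makes the key hypothesis (equivariant cofibration) explicit, while the paper's is shorter because the paths model makes lifting automatic. If you follow your plan you should still, as the paper does, exhibit the inverse concretely or at least indicate how the null-homotopy of $\psi\phi$ and $\phi\psi$ is built from $h$, since ``all the maps in sight are assembled from the lifted retraction'' is the step a referee would want to see spelled out. Your closing remark about $\tilde{A}_x$ is harmless but unnecessary: for $x\in A$ the relative universal cover is $(\tilde{A}_x)_+$ with no quotient, so no analogue of this lemma is needed there.
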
 

\begin{proof} 
There is a $\pi_1(B)$-equivariant map 
\[\phi\colon \tilde{B}_x\cup C\bar{A}_x\rightarrow \tilde{B}_x/\bar{A}_x\] defined
 by collapsing the cone to the base point.

Since $A\subset B$ is a cofibration there is a map \[u\colon 
B\rightarrow I\] such that $u^{-1}(0)=A$ and a homotopy \[h\colon B\times I
\rightarrow B\] such that $h(b,0)=b$ for all $b\in B$, $h(a,t)=a$ for all $a\in A$
and $t\in I$ and $h(b,1)\in A$ if $u(b)<1$.   The map 
\[\psi\colon \tilde{B}_x/\bar{A}_x\rightarrow \tilde{B}_x\cup C\bar{A}_x\] is 
defined by 
\[\psi(\gamma)=\left\lbrace \begin{array}{ll}
h(\gamma(1),t)|_{[0,2(1-u(\gamma(1)))]}\circ \gamma &\mathrm{if}\, \frac{1}{2}
\leq u(\gamma(1))\leq 1\\
(h(\gamma(1),t)\circ \gamma, 1-2u(\gamma(1)))&\mathrm{if}\, 0\leq u(\gamma(1))\leq 
\frac{1}{2}
\end{array}\right.\]
The map $\psi$ is $\pi_1(B)$ equivariant.  Up to homotopy it is an inverse for $\phi$.

\end{proof}

\begin{thm}\label{runivcovdual}
If $A\subset B$ are closed smooth manifolds the relative universal cover 
$\runicov{B}{A}$ is dualizable as a module over 
$\rfuncat{B}{A}$.  
\end{thm}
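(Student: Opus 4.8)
The plan is to reduce the statement to the already-established machinery for duals of functors supported on isomorphisms, exactly as was done for $\rcomsp{B}{A}$ in \autoref{comspdual}, but now over the richer indexing category $\rfuncat{B}{A}$ rather than $\rcomcat{B}{A}$. The key structural observation is that $\runicov{B}{A}$ is built from the based spaces $(\tilde A_x)_+$ on the objects $x\in A$ and $\tilde B_x/\bar A_x \simeq \tilde B_x\cup C\bar A_x$ on the objects $x\notin A$, with the actions of $\rfuncat{B}{A}$ given by deck transformations (composition of paths). Since a morphism in $\rfuncat{B}{A}$ that is not an isomorphism must go from an object of $A$ to an object of $B\setminus A$, and $\runicov{B}{A}$ sends such a morphism to the inclusion of $(\tilde A_x)_+$ as a basepoint component inside $\tilde B_y/\bar A_y$, the functor is \emph{supported on isomorphisms} in the sense of \autoref{trivialalg} once we pass to the based-space analog. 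Thus I expect to invoke the based-space versions of \autoref{specialtensor} and \autoref{explicitdual}, which the excerpt promises are available ("We will use the ideas of \autoref{explicitdual} to produce dual pairs in this bicategory").

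**Key steps.** First I would fix representatives: take one object $x_0\in A$ for each component of $A$ and one object $y_0\in B\setminus A$ for each component of $B$ meeting $B\setminus A$; the automorphism groups $\rfuncat{B}{A}(x_0,x_0)=\pi_1(A,x_0)$ and $\rfuncat{B}{A}(y_0,y_0)=\pi_1(B,y_0)$ are the "rings" over which we must exhibit duals. Second, I would define the candidate dual $D(\runicov{B}{A})\colon \rfuncat{B}{A}\rightarrow \Top_*$ by setting $D(\runicov{B}{A})(x)=D((\tilde A_x)_+)$ for $x\in A$ and $D(\runicov{B}{A})(x)=D(\tilde B_x\cup C\bar A_x)$ for $x\notin A$, where the duals are taken fiberwise over the universal covers using an embedding of $B$ in $\mathbb{R}^n$ — these are the $\pi_1$-equivariant (parametrized) Spanier–Whitehead duals supplied by \autoref{topduals}, which are known to be dualizable as modules over the respective fundamental groups. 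Third, as in the proof of \autoref{comspdual}, I would reduce to the connected case, write out the evaluation $\epsilon$ as the four component maps (the $A$-to-$A$, $A$-to-$B/A$, $B/A$-to-$A$, and $B/A$-to-$B/A$ pieces), observe by naturality that the off-diagonal piece landing in $S^n\wedge U$ over an $A$-object is forced to be the constant map, and recognize the two diagonal pieces as the equivariant evaluations for $(\tilde A_x)_+$ and $\tilde B_x\cup C\bar A_x$. Fourth, I would build the coevaluation $\eta\colon S^n\rightarrow B(\runicov{B}{A},\rfuncat{B}{A},D(\runicov{B}{A}))$ by noting (again paralleling \autoref{comspdual}) that this bar construction decomposes up to equivalence as a wedge of the two "local" smash products, over which we already have coevaluations, and then precomposing with the pinch map $S^n\to S^n\vee S^n$. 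Finally, the triangle identities are checked on the two representatives separately, where they collapse to the triangle identities for the equivariant dual pairs established in \autoref{topduals}.

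**Main obstacle.** The routine parts — the bookkeeping of the decomposition, the naturality argument killing the off-diagonal evaluation, the pinch-map coevaluation — transfer essentially verbatim from \autoref{comspdual}. The genuinely new point, and the step I expect to require the most care, is justifying that $\tilde B_x\cup C\bar A_x$ is dualizable \emph{as a $\pi_1(B,x)$-module} with dual $D(\tilde B_x\cup C\bar A_x)$: this is the equivariant (parametrized, over the universal cover) refinement of \autoref{topduals}(2), and it is the place where one must be honest about the difference between $\rfuncat{B}{A}$ and $\rcomcat{B}{A}$. I would handle this by citing the parametrized Atiyah duality results of \cite{MS} (the parametrized analog of \autoref{topduals} over the classifying space, pulled back along $B\to B$, equivalently working $\pi_1$-equivariantly over the universal cover), together with the cofibration hypothesis which, via the lemma immediately preceding the theorem, lets us replace $\tilde B_x/\bar A_x$ by the mapping cone $\tilde B_x\cup C\bar A_x$ without changing the equivariant stable homotopy type. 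Once that single equivariant-dualizability input is in hand, the based-space versions of \autoref{specialtensor} and \autoref{explicitdual} assemble it into dualizability of $\runicov{B}{A}$ over $\rfuncat{B}{A}$, completing the proof.
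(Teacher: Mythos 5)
Your proposal is correct and follows essentially the same route as the paper's proof: reduce to a representative in each isomorphism class, observe that $\runicov{B}{A}$ is supported on isomorphisms so only the diagonal pieces of the evaluation matter, build the dual componentwise from the $\pi_1(A)$-dual of $\tilde A_+$ and the $\pi_1(B)$-dual of $\tilde B\cup C\bar A$, assemble the coevaluation via the pinch $S^n\to S^n\vee S^n$, and check the triangle identities on each representative. The paper handles the equivariant-dualizability input you flag as the main obstacle by an explicit fiberwise construction (following \cite[5.3.3]{thesis} and the $\odot$-composition of \cite[17.1.3]{MS}) rather than by directly citing parametrized Atiyah duality from \cite{MS}, but this is a choice of citation, not a different argument.
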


\begin{proof} The proof of this lemma is very similar to the proof of 
 \autoref{comspdual}.  We will define this dual pair by defining a dual 
pair for each isomorphism class of objects in $\rfuncat{B}{A}$.  

To simplify notation, consider the case where $A$ and $B$ are connected.
Let $S^{\nu_A}$ be the fiberwise one point compactification of the normal bundle of 
$A$.  This is a space over $A$ and has a section given by the points at infinity.
Let $D(\tilde{A}_+)$ be the space $(\tilde{A}\times_A S^{\nu_A})/\sim$ where
all points of the form $(\gamma, \infty_{\gamma(1)})$ are identified to a single
point.  This is the dual of $\tilde{A}_+$ as a distributor over $\pi_1(A)$, see \cite[5.3.3]{thesis}. 

Let  $C_B(S^{\nu_B},S^{\nu_A})$ be \[(B\times \{0\})\cup 
(S^{\nu_A}\times I) \cup (S^{\nu_B}\times \{1\}).\]
This is the fiberwise cone of the map $S^{\nu_A}\rightarrow S^{\nu_B}$  over $B$.
Let $D(\tilde{B}\cup C\bar{A})$ be the space 
\[(\tilde{B}\times_B C_B(S^{\nu_B},S^{\nu_A}))/\sim\] where
all points of the form $(\gamma, \gamma(1)\times \{1\})$ are identified to a single point.
This is the $\odot$ composition of the fiberwise spaces $(\tilde{B},p)_+$ and $C_B(S^{\nu_B},S^{\nu_A})$
defined in \cite[17.1.3]{MS}.
An argument like that in \cite[5.3.3]{thesis} for $\tilde{A}_+$ shows this is the dual
of $\tilde{B}\cup C\bar{A}$ as a distributor over $\pi_1(B)$.

The dual of $\runicov{B}{A}$, denoted $D(\runicov{B}{A})$, is 
\[D(\runicov{B}{A})(x)=\left\lbrace \begin{array}{ll}
D(\tilde{A}_+)&\mathrm{if} \,x\in A
\\
D(\tilde{B}\cup C\bar{A})&\mathrm{if} \,x\in B\setminus A
\end{array}\right. \] 
The action of the morphisms in $\rfuncat{B}{A}$ is by composition.

Using the assumption that $A$ and $B$ are connected there are two isomorphism classes of objects
in $\rfuncat{B}{A}$.  As in \autoref{comspdual}, there are four maps that 
define the natural transformation $\epsilon$.  Exactly as in that 
case there are only two that are nontrivial.  These maps are the evaluation 
maps for the dual pairs $(\tilde{A}_{+}, D(\tilde{A}_+))$
and $({\tilde{B}/\bar{A}}, D(\tilde{B}\cup C\bar{A}))$.

Also as in \autoref{comspdual}, $B(\runicov{B}{A}, \rfuncat{B}{A}, D(\runicov{B}{A}))$
is equivalent to \[\left( \tilde{A}_{+}\wedge_{\pi_1(A)} D(\tilde{A}_+)\right)  
\vee  \left( (\tilde{B}/\bar{A})\wedge_{\pi_1(B)} D(\tilde{B}\cup C\bar{A})\right). \]

The coevaluation map is the composite of the fold map
\[S^n\rightarrow S^n\vee S^n\] and the coevaluations for the pairs    
$(\tilde{A}_{+}, D(\tilde{A}_+))$
and $({\tilde{B}\cup C\bar{A}}, D(\tilde{B}\cup C\bar{A}))$,.

The required diagrams commute since the coevaluation and evaluation maps 
are defined using coevaluation and evaluation maps from the dual pairs
$(\tilde{A}_{+}, D(\tilde{A}_+))$
and $({\tilde{B}/ \bar{A}}, D(\tilde{B}\cup C\bar{A}))$.
\end{proof}

\begin{rmk}\label{explicitdescription}
We can also give more explicit descriptions of the coevaluation and evaluation maps
for the pairs $(\tilde{A}_{+}, D(\tilde{A}_+))$
and $({\tilde{B}/\bar{A}}, D(\tilde{B}\cup C\bar{A}))$.

The coevaluation for the pair $(\tilde{A}_{+}, D(\tilde{A}_+))$ is 
the composite
\[\xymatrix{S^n\ar[r]&T\nu_A\ar[r]& \tilde{A}_+\wedge_{\pi_1A}D(\tilde{A}_+)}\]
of the Pontryagin-Thom map for an embedding of $A$ in $S^n$ with 
the map $v\mapsto (\gamma,\gamma, v)$ where $\gamma$ is any element 
of $\tilde{A}$ that ends at the base of $v$.  

Since $A$ is locally contractible there is a neighborhood $U$ of the diagonal 
in $A\times A$ and a map 
\[H\colon V\rightarrow A^I\] that satisfies 
$H(x,x)(t)=x$, $H(x,y,0)=x$, and $H(x,y,1)=y$.  The evaluation for the pair 
$(\tilde{A}_{+}, D(\tilde{A}_+))$, 
\[D(\tilde{A}_+)\wedge \tilde{A}_+\rightarrow S^n\wedge \pi_1A_+\] 
is defined by 
\[(v,\gamma, \delta)=(\epsilon(v,\delta(1)), \gamma^{-1}H(\delta(1),\gamma(1))\delta)\]
where $\epsilon$ is the evaluation for the dual pair $(A_+,D(A_+))$.

The coevaluation and evaluation for the dual pair 
$({\tilde{B}/\bar{A}}, D(\tilde{B}\cup C\bar{A}))$ are similar.
\end{rmk}

A relative map $f\colon (B,A)\rightarrow (B,A)$ induces a map 
\[f_*\colon \runicov{B}{A}\rightarrow \runicov{B}{A}\odot \rfuncatf{B}{A}\]
where $\rfuncatf{B}{A}(x,y)=\rfuncat{B}{A}(f(y),x)_+$.  The left action is the 
usual left action.  The right action is given by applying $f$ and then composition.

\begin{defn}
The \emph{relative geometric Reidemeister trace} of $f\colon (B,A)\rightarrow (B,A)$,
$\rrgeo{f}{B}{A}$, is the trace of the map 
\[f_*\colon \runicov{B}{A}\rightarrow   \runicov{B}{A}\odot \rfuncatf{B}{A}.\]
\end{defn}

The relative geometric Reidemeister trace is the stable homotopy class of a map 
\[S^0\rightarrow \sh{\rfuncatf{B}{A}}\] and so it is an element of the 
$0^{th}$ stable homotopy group of $\sh{\rfuncatf{B}{A}}$.  
By definition, the relative geometric
Reidemeister trace is an invariant of the relative homotopy class of the map.

Let $X$ be a dualizable space.  For a space $U$ and a map 
\[\triangle\colon X\rightarrow 
 X\wedge U\] the {\em transfer} of an endomorphism $f\colon X\rightarrow X$
 with respect to $\triangle$ is the composite
\[S^n\stackrel{\eta}{\rightarrow} X\wedge DX\stackrel{\gamma}{\rightarrow} DX\wedge X
\stackrel{\id\wedge f}{\rightarrow}DX\wedge X\stackrel{\id\wedge \triangle}{\rightarrow}
DX\wedge X\wedge U\stackrel{\epsilon\wedge \id}{\rightarrow}S^n\wedge U.\]

Let  \[\Lambda^{f|_A}A\coloneqq\{\gamma\in A^I |f(\gamma(0))=\gamma(1)\}\] and
\[\Lambda^{f}B\coloneqq\{\gamma\in B^I |f(\gamma(0))=\gamma(1)\}.\] 
Since $A$ and $B$ are locally contractible and $f$ is taut there are neighborhoods 
$U_A$ of the fixed points of $A$ and $U_B$ of the fixed points of $B\setminus A$ 
and maps 
\[\iota_A\colon U_A\rightarrow \Lambda^{f|_A}A\]
\[\iota_B\colon U_B\rightarrow \Lambda^fB\] that take fixed points of $f$ to the constant path at that point.
Note that two fixed points of $f$ are in the same fixed point class if and only if their 
images are in the same connected component of $\Lambda^{f|_A}A$ or
 $\Lambda^fB$. See \cite{Brownbook} or
 \cite{Jiang} for the  definition of fixed point classes.

Let $\tau_{U_A}(f|_A)$ denote the transfer of $f$ with respect to the diagonal map
\[A_+\rightarrow A_+\wedge \overline{U_{A}}/\partial(\overline{U_A})\] and similarly for $B$. 

\begin{lem} \label{splitshadgeo}
If $A$ is a proper subset of $B$ there is an isomorphism 
\[\pi_0^s(\sh{\rfuncatf{B}{A}})\cong \pi_0^s(\Lambda^{f|_A}A)\oplus 
\pi_0^s(\Lambda^{f}B).\] 
The image of the relative geometric Reidemeister 
trace of $f$ under this isomorphism is 
\[(\iota_A)_*(\tau_{U_A}(f|_A))+(\iota_B)_*(\tau_{U_B}(f)).\]
\end{lem}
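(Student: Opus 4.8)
The strategy mirrors the proof of \autoref{splitidx}, but carried out one level up — over the relative fundamental category $\rfuncat{B}{A}$ rather than the relative component category, and using \autoref{runivcovdual} in place of \autoref{comspdual}. First I would establish the splitting of the shadow. The shadow $\sh{\rfuncatf{B}{A}}$ is the coequalizer of the two $\rfuncat{B}{A}$-actions on $\rfuncatf{B}{A}$. Since $\rfuncat{B}{A}(x,y)=\varnothing$ whenever $x\in A$ and $y\notin A$, this coequalizer splits as a disjoint union of two pieces: one over pairs $(x,y)$ with $x,y\in A$, and one over pairs $(x,y)$ with $x,y\notin A$. The first piece is the shadow of the cyclic bar construction for $f|_A$ acting on $\funcat{A}$, i.e. $\sh{\funcatf{A}}$, whose associated stable homotopy group is $\pi_0^s$ of the twisted free loop space $\Lambda^{f|_A}A$ (this identification is exactly the one used in the absolute case in \cite{KW}). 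The second piece, over $x,y\notin A$, gives $\sh{\funcatf{B}}$ and hence $\pi_0^s(\Lambda^{f}B)$ — here one uses that the morphisms between points of $B\setminus A$ in $\rfuncat{B}{A}$ are homotopy classes of paths in all of $B$, so this piece sees the full fundamental groupoid of $B$.

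**Identifying the two components of the trace.** By \autoref{tracefunctor}/naturality, the splitting of the shadow is compatible with the dual-pair decomposition constructed in the proof of \autoref{runivcovdual}: the dual $D(\runicov{B}{A})$ restricts on the two isomorphism classes to $D(\tilde A_+)$ and $D(\tilde B\cup C\bar A)$, and $B(\runicov{B}{A},\rfuncat{B}{A},D(\runicov{B}{A}))$ splits up to equivalence as the wedge $(\tilde A_+\wedge_{\pi_1 A}D(\tilde A_+))\vee((\tilde B/\bar A)\wedge_{\pi_1 B}D(\tilde B\cup C\bar A))$. Therefore the image of $\rrgeo{f}{B}{A}$ under the projection to the first summand is the trace, computed in $\sE_{\Top_*}$, of $f_*$ restricted to the sub-distributor $(\tilde A_\bullet)_+$ over $\pi_1(A)$ — that is, the (absolute) geometric Reidemeister trace of $f|_A$. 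Likewise the projection to the second summand is the trace of the induced map on $\tilde B/\bar A$ over $\pi_1(B)$; as in \autoref{splitidx}, the cone point (representing $A$) is the base point and so contributes nothing, leaving the geometric Reidemeister trace of $f$ on $B$ in the sense relevant to fixed points in $B\setminus A$.

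**From trace to transfer.** The last step is to rewrite each of these two absolute Reidemeister traces as the asserted pushed-forward transfer. For the first summand: using the explicit coevaluation and evaluation for $(\tilde A_+,D(\tilde A_+))$ recorded in \autoref{explicitdescription}, the composite defining $\tr(f_*|_A)$ is exactly the transfer $\tau_{U_A}(f|_A)$ of $f$ with respect to the diagonal $A_+\to A_+\wedge \overline{U_A}/\partial(\overline{U_A})$, followed by the map on $\pi_0^s$ induced by $\iota_A\colon U_A\to\Lambda^{f|_A}A$ — this is the core computation of \cite{KW}, applied verbatim to $A$. The same argument with $(\tilde B/\bar A,D(\tilde B\cup C\bar A))$ and the neighborhood $U_B$ of the fixed points of $B\setminus A$ gives $(\iota_B)_*(\tau_{U_B}(f))$. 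Adding the two contributions yields the claimed formula.

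**Main obstacle.** The formal splitting of the shadow and the compatibility with the dual pair are routine given \autoref{runivcovdual}; the real work is the identification of each summand's trace with a transfer. The delicate point is tautness: one must check that the neighborhoods $U_A$ and $U_B$ can be chosen disjointly — isolating the fixed points in $A$ from those in $B\setminus A$ — and that the diagonal maps used to define the transfers are compatible with collapsing $\bar A$ to the base point in $\tilde B/\bar A$, so that no fixed point of $f|_A$ is double-counted in the second summand. This is where the taut-map hypothesis does essential work, and verifying it carefully is the step I expect to require the most attention.
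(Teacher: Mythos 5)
Your proposal matches the paper's proof in both structure and substance: split the shadow coequalizer using the emptiness of cross-morphisms (the paper phrases this as verifying the universal property of $\pi_0(\Lambda^{f|_A}A)\oplus \pi_0^s(\Lambda^{f}B)$), observe the splitting is compatible with the dual pair from \autoref{runivcovdual}, and then use the explicit coevaluation/evaluation descriptions of \autoref{explicitdescription} to identify each summand's trace with the pushed-forward transfer. The only stylistic difference is emphasis — the paper spends its effort on the commutative diagrams relating $(\eta_1,\epsilon_1)$ to $(\eta_2,\epsilon_2)$ rather than on the tautness/disjointness of $U_A$ and $U_B$, which it sets up before the lemma statement; your flag that tautness is doing essential work there is accurate, but it is handled outside the proof itself rather than being the crux within it.
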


\begin{proof} We first define the isomorphism. 

Note that $\pi_0^s(X)\cong \bZ\pi_0(X)$ for any space $X$, so 
it is enough to show $\pi_0(\Lambda^{f|_A}A)\oplus 
\pi_0^s(\Lambda^{f}B)$ satisfies the universal property that defines 
the shadow of $\rfuncatf{B}{A}$.  

The shadow of $\rfuncatf{B}{A}$ is defined to be the coequalizer of the maps 
\[\xymatrix{ \amalg_{x,y}  \rfuncat{B}{A}(x,y)\times \rfuncat{B}{A}(f(y),x)
\ar@<.5ex>[r]\ar@<-.5ex>[r]
&\amalg_x \rfuncat{B}{A}(f(x),x)
}.\]  The inclusion maps
\[(\amalg_{x\in A}\rfuncat{B}{A}(f(x),x))
\amalg (\amalg_{x\not\in A} \rfuncat{B}{A}(f(x),x))\rightarrow 
\pi_0(\Lambda^{f|_A}A)\oplus \pi_0^s(\Lambda^{f}B)\] define a map 
\[\theta\colon \amalg_x \rfuncat{B}{A}(f(x),x)\rightarrow 
\pi_0(\Lambda^{f|_A}A)\oplus \pi_0^s(\Lambda^{f}B).\]
Let $\alpha\in  \rfuncat{B}{A}(x,y)$ and $\beta\in \rfuncat{B}{A}(f(y),x)$.
If $x,y\in A$ then $\beta\alpha$ and $f(\alpha)\beta$ represent the same elements in 
$\pi_0(\Lambda^{f|_A}A)$.  If $x,y\in B\setminus A$, $\beta\alpha$ and $f(\alpha)\beta$ 
represent the same elements in $\pi_0(\Lambda^{f}B)$.  If $x$ and $y$ do not 
satisfy these conditions, there is no condition to check on the paths.  So $\theta$ 
coequalizes.

If $\phi\colon \amalg_x \rfuncat{B}{A}(f(x),x)\rightarrow M$ is a map that coequalizes the maps
above define a map \[\bar{\phi}\colon \pi_0(\Lambda^{f|_A}A)\oplus \pi_0^s(\Lambda^{f}B)
\rightarrow  M\] by $\bar{\phi}(\gamma)=\phi(\beta)$ where $\beta$ is any element of 
$\rfuncat{B}{A}(f(x),x)$ that maps to $\gamma$ in $ \pi_0(\Lambda^{f|_A}A)\oplus \pi_0^s(\Lambda^{f}B)$.
This is independent of choices since if $\alpha$ is another lift of $\gamma$ then
there are paths $\mu$ and $\nu$ such that $f(\mu)\nu$ is homotopic to $\beta$ and 
$\nu \mu$ is homotopic to $\alpha$.  Then $\bar{\phi}$ is unique and $\pi_0(\Lambda^{f|_A}A)\oplus 
\pi_0^s(\Lambda^{f}B)$ is the coequalizer.

To describe the image of the geometric Reidemeister trace under 
this isomorphism it 
is enough to show the trace of 
\[\tilde{f|_A}\colon \tilde{A}\rightarrow \tilde{A}\] is $(\iota_A)_*(\tau_{U_A}(f|_A))$
and the trace of 
\[\tilde{f}\colon \tilde{B}/\bar{A}\rightarrow \tilde{B}/\bar{A}\] 
is $(\iota_B)_*(\tau_{U_B}(f))$.  We will describe the first, the second is similar. 

In \autoref{explicitdescription} we gave explicit descriptions of the coevaluation 
and evaluation for the dual pair $(\tilde{A}_{+}, D(\tilde{A}_+))$.  
Let $q\colon D(\tilde{A}_+)\wedge \tilde{A}_+\rightarrow DA\wedge A_+$ be the 
quotient map. If 
$\eta_1$ and $\epsilon_1$ are the coevaluation and evaluation for the dual 
pair $(\tilde{A}_{+}, D(\tilde{A}_+))$ 
and $\eta_2$ and $\epsilon_2$ are the coevaluation and evaluation for the 
dual pair $(A_+,DA_+)$ then the explicit descriptions of $\eta_1$ and $\epsilon_1$ 
show 
\[\xymatrix{S^n\ar[r]^-{\eta_1}\ar[dr]_-{\eta_2}&\tilde{A}_+\wedge_{\pi_1A}D(\tilde{A}_+)\ar[d]^q\\
&A_+\wedge DA}\] 
and 
\[\xymatrix{\sh{D(\tilde{A}_+)\wedge \tilde{A}_+\wedge \funcatf{A}}\ar[rr]^-{\epsilon_1}
\ar[d]^q&&S^n\wedge \sh{\funcatf{A}}_+\\
DA\wedge A_+\ar[r]^-{\id\wedge\triangle}&DA\wedge A_+\wedge \overline{U_A}/\partial \overline{U_A}
\ar[r]^-{\epsilon_2\wedge \id}&S^n\wedge \overline{U_A}/\partial \overline{U_A}\ar[u]^{\id\wedge \iota_A}}\]
commute.

Together these diagrams show 
\begin{eqnarray*}
(\epsilon_1\wedge \id)(\tilde{f}\wedge \id)\gamma\eta_1 &=&
(\id\wedge \iota_A)(\epsilon_2\wedge \id)(\id\wedge \triangle) q (\tilde{f}\wedge \id)\gamma\eta_1\\
&=& (\id\wedge \iota_A)(\epsilon_2\wedge \id)(\id\wedge \triangle) (f\wedge \id)\gamma q\eta_1\\
&=& (\id\wedge \iota_A)(\epsilon_2\wedge \id)(\id\wedge \triangle) (f\wedge \id)\gamma \eta_2
\end{eqnarray*}
The first composite is the trace of $\widetilde{f|_A}$.  The last composite is $(\iota_A)_*(\tau_{U_A}(f|_A))$.
\end{proof}

For a 
fixed point class $\beta$ of $f\colon B\rightarrow B$ let $i_{\beta}^{rel}$
be the index of the fixed points associated to $\beta$ that are contained in $B\setminus A$.
For a fixed point class $\alpha$ of $f|_A\colon A\rightarrow A$, let $i_\alpha$ be the 
index of the fixed points associated to $\alpha$.  Since the map $f$ is taut, 
$i_\alpha$ is the fixed point index of the fixed points in $A$ with respect to 
either $f|_A$ or $f$.  

The following corollary is a consequence of 
\autoref{splitshadgeo} and  is the generalization of \autoref{splitidx}.  This corollary 
identifies
the relative geometric Reidemeister trace with the generalization of the 
classical description of the Reidemeister trace.

\begin{cor}\label{altgeoform} Under the isomorphism in \autoref{splitshadgeo}, 
\[\rrgeo{f}{B}{A}=\left(\sum i_\alpha \alpha\right) +\left(\sum i_\beta^{rel}\beta\right)\in 
\pi_0(\Lambda^{f|_A}A)\oplus \pi_0^s(\Lambda^{f}B).\]
\end{cor}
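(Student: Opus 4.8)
The plan is to read this off from \autoref{splitshadgeo}. That lemma identifies $\pi_0^s(\sh{\rfuncatf{B}{A}})$ with $\pi_0(\Lambda^{f|_A}A)\oplus\pi_0^s(\Lambda^{f}B)$ and shows that under this identification $\rrgeo{f}{B}{A}$ corresponds to $(\iota_A)_*(\tau_{U_A}(f|_A))+(\iota_B)_*(\tau_{U_B}(f))$. Hence the corollary is equivalent to the two statements
\[(\iota_A)_*(\tau_{U_A}(f|_A))=\sum i_\alpha\alpha\in\pi_0(\Lambda^{f|_A}A),\qquad(\iota_B)_*(\tau_{U_B}(f))=\sum i_\beta^{rel}\beta\in\pi_0^s(\Lambda^{f}B).\]
Both are the same computation — the transfer defined just before \autoref{splitshadgeo}, post-composed with a collapse, is a classical fixed point index — so I would carry it out for the first and remark that the second is identical.

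For the first statement, the fixed point classes of $f|_A$ are finitely many disjoint open and closed subsets of the compact set $\mathrm{Fix}(f|_A)$, so $U_A$ may be taken to be a disjoint union $\coprod_\alpha U_\alpha$ with $U_\alpha\cap\mathrm{Fix}(f|_A)$ the class-$\alpha$ fixed points and nothing else; this is harmless because $(\iota_A)_*(\tau_{U_A}(f|_A))$, being a summand of $\rrgeo{f}{B}{A}$, is independent of the choice of $U_A$. Then $\overline{U_A}/\partial\overline{U_A}=\bigvee_\alpha\overline{U_\alpha}/\partial\overline{U_\alpha}$, the transfer splits as $\tau_{U_A}(f|_A)=\sum_\alpha\tau_\alpha$ with $\tau_\alpha\in\pi_0^s(\overline{U_\alpha}/\partial\overline{U_\alpha})$, and post-composing $\tau_\alpha$ with the collapse $\overline{U_\alpha}/\partial\overline{U_\alpha}\to S^0$ replaces the diagonal in the definition of the transfer by the collapse $\rho_\alpha\colon A_+\to A_+$ onto $U_\alpha$; the resulting composite is the fixed point index of $\rho_\alpha f$, whose only fixed points are the class-$\alpha$ fixed points of $f|_A$ and near which it coincides with $f|_A$, so by additivity and homotopy invariance of the fixed point index (\cite{Doldbook}) it equals $i_\alpha$. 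Finally $\iota_A$ sends $\overline{U_\alpha}/\partial\overline{U_\alpha}$ into the path component of $\Lambda^{f|_A}A$ labelled by $\alpha$ (it sends a fixed point to the constant path at it, and as recorded before \autoref{splitshadgeo} two fixed points lie in the same component of $\Lambda^{f|_A}A$ exactly when they are in the same class), so $(\iota_A)_*(\tau_\alpha)$ is concentrated on the component $\alpha$ with coefficient the image of $\tau_\alpha$ under the collapse above, namely $i_\alpha$. Summing over $\alpha$ gives the first equality.

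The second equality has the identical proof with $B$ in place of $A$ and $f$ in place of $f|_A$, the only change being that $U_B$ is a neighborhood of the fixed points in $B\setminus A$ alone — a separation made possible by tautness of $f$ — so the part of $U_B$ surrounding a class $\beta$ contains only the class-$\beta$ fixed points lying in $B\setminus A$, whose index is $i_\beta^{rel}$, and $\iota_B$ sends this part into the $\beta$ component of $\Lambda^{f}B$. The only input that is not routine bookkeeping around \autoref{splitshadgeo} is the identification of a transfer localized near an isolated fixed-point set with Dold's fixed point index of that set, i.e. the comparison of the bicategorical trace with the classical index; this is exactly what Klein and Williams use in \cite{KW}, so I would cite it rather than reprove it.
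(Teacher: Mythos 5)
Your route is exactly what the paper leaves implicit: the paper gives no written proof of this corollary, saying only that it is a consequence of \autoref{splitshadgeo}, and your argument supplies the intended details --- partition $U_A$ (respectively $U_B$) into pieces $U_\alpha$ isolating each fixed point class, use additivity of the transfer to split $\tau_{U_A}(f|_A)$, note that $\iota_A$ carries each piece into the corresponding component of $\Lambda^{f|_A}A$, and identify the degree of each piece with Dold's local index via the localization property of the transfer, which you correctly flag as the one non-routine input and outsource to \cite{KW}. The only caution is the auxiliary construction in the middle of your second paragraph: the map $\rho_\alpha\colon A_+\to A_+$ (identity on $U_\alpha$, basepoint elsewhere) and the ``collapse'' $\overline{U_\alpha}/\partial\overline{U_\alpha}\to S^0$ are not continuous unless $U_\alpha$ is clopen, so they cannot be used literally as written; this does not affect the conclusion, since the precise statement you need is exactly the Klein--Williams identification of the localized transfer with the index, and the proof should simply route through that citation rather than through $\rho_\alpha$.
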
 

The following two examples were described in  \cite{NOW94}.  In that paper the 
generalized Lefschetz number and one form of the relative Nielsen number are computed.

\begin{eg}\cite[5.1]{NOW94}\label{ex51}  Let $B=D^2\times S^1$ and $A=S^1\times S^1$.
Let $f\colon B\rightarrow B$ be \[f(re^{i\theta}, e^{it})=(f_1(r)e^{-i\theta}, e^{3it})\]
where $f_1\colon [0,1]\rightarrow [0,1]$ is a continuous function such that $f_1(0)=0$, 
$f_1(1)=1$ and $f_1$ has no other fixed points.  Then $f$ is a relative map with 
six fixed points.  There are four fixed points in $A$.   These fixed points all represent 
different fixed point classes  and all have index $-1$.   The two fixed points outside of $A$  
represent different fixed point classes in $B$ and also have index $-1$.  

Since $A$ is a torus, $\pi_1(A)=\langle a,b|abab=1\rangle$.  The relation imposed on 
the shadow implies $\sh{\pi_1(A)^\phi}$ consists of 4 elements, \[1, a, b, ab.\]  
For $B$, $\pi_1(b)=\langle b \rangle$ and  $\sh{\pi_1(B)^\phi}$
consists of 2 elements, \[1,b.\]  
Then \[\rrgeo{f}{B}{A}=-1(1_A+a_A+b_A+ab_A+1_B+b_B).\]

\end{eg}  

\begin{eg}\cite[5.2]{NOW94}\label{ex52}
Let $B=S^1\times S^1$ and $A=1\times S^1$.  Let $f\colon B\rightarrow B$ be 
\[f(e^{i\theta}, e^{it})=(e^{3i\theta}, e^{4it}).\]  There are three fixed points of $f$ in $A$
and three additional fixed points of $f$ in $B\setminus A$.  

The three fixed points of $f$ in $A$ represent each of the three possible fixed point classes.
These fixed points all have index 1.  The three fixed points in $B$ that are not in $A$ also 
represent three distinct fixed point classes, but these are only three of the six possible fixed
point classes.  These fixed points also have index 1.

Let $\pi_1(B)=\langle a,b|abab=1\rangle$.  Then $\pi_1(A)=\langle a\rangle$.   The set $\sh{\pi_1(B)^\phi}$ consists
of \[1,a,a^2, b, ab, a^2b.\]  The set $\sh{\pi_1(A)^\phi}$ consists
of \[1,a,a^2.\]  Then \[\rrgeo{f}{B}{A}=1_A+a_A+a^2_A+b_B+(ab)_B+(a^2b)_B.\]

\end{eg}

\subsection*{The relative Nielsen number}
One of the expectations for the Reidemeister trace is that 
it can detect when a map has no fixed points but it does not have to provide
a lower bound for the number of fixed points.  This is very different from the Nielsen 
number.  The goal of the Nielsen number is to provide a lower bound.

In the classical case, the Nielsen number is the number of nonzero coefficients 
in the Reidemeister trace.  This implies 
the Nielsen number is zero if and only if the Reidemeister trace is zero.
For more general situations the connection between nonzero coefficients of the 
Reidemeister trace and the Nielsen number does not hold.  It remains true that 
the Nielsen number is zero if and only if the Reidemeister trace is zero.

The inclusion of $A$ into $B$ induces a map $\pi_1(A)\rightarrow \pi_1(B)$ and 
also induces a map $\Phi$ from the fixed point classes of $A$ to the fixed point
classes of $B$.  A fixed point class of $f$ or $f|_A$ is essential if its coefficient
in the classical Reidemeister trace is nonzero.  
Let \[N(f,f|_A)\] be the number of essential fixed point classes of $B$ that
are in the image of an essential class of $A$.  Let $N(f)$ be the classical 
Nielsen number of $f$ and $N(f|_A)$ be the classical Nielsen number of 
$f|_A$.

\begin{defn}\cite[2.5]{Zhao05} 
The \emph{relative Nielsen number}, $N(f;B,A)$, is \[N(f|_A)+\left( N(f)
-N(f,f|_A)\right) .\] 
\end{defn}

\begin{lem} The relative Nielsen number of $f$ is zero if and only if 
the relative geometric Reidemeister trace of $f$ is zero.
\end{lem}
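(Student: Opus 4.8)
The plan is to use the splitting of the relative geometric Reidemeister trace established in \autoref{altgeoform} and to reduce the claim to two statements about the classical situation, one for $f|_A$ and one for the relative fixed point classes of $f$ in $B\setminus A$. By \autoref{altgeoform}, under the isomorphism
\[\pi_0^s(\sh{\rfuncatf{B}{A}})\cong \pi_0(\Lambda^{f|_A}A)\oplus \pi_0^s(\Lambda^{f}B),\]
we have $\rrgeo{f}{B}{A}=\left(\sum i_\alpha\alpha\right)+\left(\sum i_\beta^{rel}\beta\right)$, where $\alpha$ ranges over fixed point classes of $f|_A$ and $\beta$ ranges over fixed point classes of $f$. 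Since the two summands live in different factors, $\rrgeo{f}{B}{A}=0$ if and only if $\sum i_\alpha\alpha=0$ in $\pi_0(\Lambda^{f|_A}A)$ \emph{and} $\sum i_\beta^{rel}\beta=0$ in $\pi_0^s(\Lambda^{f}B)$. The classes $\alpha$ index distinct components of $\Lambda^{f|_A}A$, and the classes $\beta$ index distinct components of $\Lambda^{f}B$, so vanishing of each sum means $i_\alpha=0$ for every fixed point class $\alpha$ of $f|_A$ and $i_\beta^{rel}=0$ for every fixed point class $\beta$ of $f$.

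First I would translate this into the language of essential classes. A fixed point class of $f|_A$ is essential precisely when $i_\alpha\neq 0$, so $\sum i_\alpha\alpha=0$ is equivalent to $N(f|_A)=0$. For the $B$-part, recall that $i_\beta^{rel}$ is the index of the fixed points associated to $\beta$ that lie in $B\setminus A$; since $f$ is taut, the fixed points of $f$ in $A$ contribute their index to $f|_A$ as well, and $i_\beta=i_\beta^{rel}+(\text{contribution from fixed points of }\beta\text{ in }A)$. One then argues that $\sum i_\beta^{rel}\beta=0$ is equivalent to the statement that the only essential fixed point classes of $f$ are those entirely accounted for by essential classes of $A$ via the map $\Phi$; that is, $N(f)-N(f,f|_A)=0$. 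Hence $\rrgeo{f}{B}{A}=0$ if and only if $N(f|_A)=0$ and $N(f)-N(f,f|_A)=0$, which by the definition of $N(f;B,A)$ is exactly $N(f;B,A)=0$ (both summands in the relative Nielsen number are nonnegative, so their sum vanishes iff each does).

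The main obstacle is the $B\setminus A$ bookkeeping: one must verify carefully that $i_\beta^{rel}=0$ for all $\beta$ is equivalent to $N(f)-N(f,f|_A)=0$, which requires knowing that for a taut map the only way an essential fixed point class $\beta$ of $f$ can have $i_\beta^{rel}=0$ is that all its essential content comes from $A$, i.e.\ $\beta\in\operatorname{image}\Phi$ and $\beta$ is essential as a class of $f$ because $i_\beta=i_\alpha\neq 0$ for the corresponding class $\alpha$ of $f|_A$. This uses the taut condition (so fixed points in $A$ and in $B\setminus A$ are separated and indices add), together with the observation that $N(f)-N(f,f|_A)$ counts exactly the essential classes of $f$ not hit by essential classes of $A$. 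I would spell out that these are precisely the $\beta$ with $i_\beta^{rel}\neq 0$. Once this dictionary is in place, the equivalence is immediate from \autoref{altgeoform}; no further computation is needed.
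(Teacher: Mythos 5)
Your overall strategy—apply \autoref{altgeoform} to split $\rrgeo{f}{B}{A}$, then translate into the language of essential classes and use non-negativity of the terms defining $N(f;B,A)$—is the same as the paper's, and the ingredients you list are the right ones. However, the central dictionary you plan to establish, namely that $\sum i_\beta^{rel}\beta=0$ is \emph{equivalent} to $N(f)-N(f,f|_A)=0$, is false as a standalone biconditional, and you would hit this wall if you tried to "spell out that these are precisely the $\beta$ with $i_\beta^{rel}\neq 0$."

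Only one direction of that claim holds. If every $i_\beta^{rel}=0$ then indeed every essential class $\beta$ of $f$ gets all of its index from $A$ and hence lies in the image of an essential class of $f|_A$, so $N(f)-N(f,f|_A)=0$. But the converse fails: take $\beta$ with $i_\beta=0$ (inessential) arising from $i_\beta^{rel}=1$ and a single $\alpha$ with $\Phi(\alpha)=\beta$ and $i_\alpha=-1$. Then $\beta$ is not essential, $N(f)-N(f,f|_A)$ does not see it, yet $i_\beta^{rel}\neq 0$. The set of $\beta$ with $i_\beta^{rel}\neq 0$ is generally a proper superset of the set of essential $\beta$ not hit by an essential $\alpha$. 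So your decomposition "$\rrgeo{f}{B}{A}=0 \iff N(f|_A)=0 \text{ and } N(f)-N(f,f|_A)=0$" cannot be proved by verifying the two factor-equivalences independently, because the $B$-factor equivalence is not true on its own.

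The fix is exactly what the paper does: in the direction "$N(f;B,A)=0 \Rightarrow \rrgeo{f}{B}{A}=0$," you must first use $N(f|_A)=0$ to conclude all $i_\alpha=0$; this forces $i_\beta^{rel}=i_\beta$ for every $\beta$ and also $N(f,f|_A)=0$, whence $N(f)=0$ and hence $i_\beta=0$, giving $i_\beta^{rel}=0$. The conclusion you reach is right, but the two conditions $N(f|_A)=0$ and $N(f)-N(f,f|_A)=0$ must be used jointly in the $B$-coordinate rather than matched one-for-one with the two coordinates of the trace.
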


\begin{proof} If the relative geometric Reidemeister trace of $f$ is 
zero \autoref{altgeoform} implies
$\left(\sum i_\beta^{rel}\beta\right)+\left(\sum i_\alpha \alpha\right)$
is zero.  Since $\mathbb{Z}\sh{\rfuncatf{B}{A}}$ is a free group generated by
the $\alpha$'s and $\beta$'s each $ i_\beta^{rel}$ and $i_\alpha$ are zero.  
Since the $i_\alpha$'s
are zero, $N(f|_A)$  and $N(f,f|_A)$ are zero.  Since each of the $i_\alpha$'s are zero
$i_\beta=i_\beta^{rel}=0$ for every $\beta$.  This implies $N(f)$ is also zero.

By definition $N(f|_A)$, $N(f)$, and $N(f,f|_A)$ are all greater than or 
equal to zero and  $N(f,f|_A)\leq N(f)$.  If the relative Nielsen number of $f$ is zero
$N(f|_A)=0$ and $N(f)=N(f,f|_A)$.  Since $N(f|_A)=0$, $N(f,f|_A)=0$
and so $N(f)=0$.  Since $N(f|_A)=0$ all of the $i_\alpha$'s are zero and $i_\beta^{rel}
=i_\beta$.  Since $N(f)=0$, $i_\beta=0$ for all $\beta$. 
\end{proof}

The relative Nielsen numbers for the maps in the examples above were
computed in \cite{NOW94}.  The relative Nielsen number for \autoref{ex51}
is 4.  This is not the number of non zero coefficients in the relative Reidemeister trace.
The relative Nielsen number for \autoref{ex52} is 6.  This does happen to be 
the number of nonzero coefficients in the relative Reidemeister trace.  These numbers
coincide because $N(f,f|_A)$ is zero for this example.

Other references for relative Nielsen theory include \cite{Jezierski95, 
schirmer86,Schirmer88, Zhao99, Zhao01}.
These invariants are also related to the Nielsen numbers for
stratified spaces defined in \cite{Jiang07}.

\section{The global Reidemeister trace}\label{algsec}
In this section we define the relative global Reidemeister trace.  This invariant 
is a generalization of the relative global Lefschetz number and can be 
identified with the relative geometric Reidemeister trace.   The relative global Reidemeister
trace a relative generalization of the invariant defined in \cite{Husseini}.
It is related to the  invariants defined in 
\cite{NOW94, Zhao06}, but it is not the same as either of these invariants.

Let $\mathbb{Z}\rfuncat{B}{A}$ be the category with the same objects as $\rfuncat{B}{A}$.  The 
morphism set \[\bZ\rfuncat{B}{A}(x,y)\] is the free abelian group on 
the set $\rfuncat{B}{A}(x,y)$.

There is a functor \[C_*(\runicov{B}{A})\colon \mathbb{Z}\rfuncat{B}{A}^\op\rightarrow \Ch_\bQ\] 
defined by $C_*(\runicov{B}{A})(x)=C_*(\runicov{B}{A}(x);\bQ)$ where the second
$C_*$ indicates the cellular chain complex.  The action of the morphisms of $\rfuncat{B}{A}$ is 
induced from the action on $\runicov{B}{A}$. 
This functor is defined in the same way that the functor $H_*(
\rcomsp{B}{A})$ is defined from the functor $\rcomsp{B}{A}$ except we 
replace the rational homology functor with the rational chain complex functor.  

\begin{prop} If $A\subset B$ are closed smooth manifolds
the $\mathbb{Z}\rfuncat{B}{A}$-module $C_*(\runicov{B}{A})$ is dualizable. 
\end{prop}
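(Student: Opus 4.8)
The plan is to run the two parallel arguments used for \autoref{algspacedual}.

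\emph{First approach.} The cellular rational chains functor $C_*(-;\bQ)$ is strong symmetric monoidal on based CW complexes: with the product CW structure one has natural isomorphisms $C_*(X\wedge Y;\bQ)\cong C_*(X;\bQ)\otimes_\bQ C_*(Y;\bQ)$ and $C_*(S^0;\bQ)\cong\bQ$. Applied levelwise to the bar and cyclic bar constructions that define $\odot$ and the shadow in $\sE_{\Top_*}$, this makes $C_*(-;\bQ)$ into a shadow functor $\sE_{\Top_*}\to\sE_{\Ch_\bQ}$ whose coherence $2$-cells are isomorphisms, carrying $\rfuncat{B}{A}$ to $\bZ\rfuncat{B}{A}$ and $\runicov{B}{A}$ to $C_*(\runicov{B}{A})$. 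Dualizability of $C_*(\runicov{B}{A})$ then follows from \autoref{runivcovdual} together with the preservation of dualizable pairs under such functors, exactly as in the first proof of \autoref{algspacedual} (using \autoref{tracefunctor}).

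\emph{Second approach.} I would instead verify the hypotheses of \autoref{explicitdual} directly. First, $C_*(\runicov{B}{A})$ is supported on isomorphisms in the sense of \autoref{trivialalg}: the only non-invertible morphisms of $\rfuncat{B}{A}$ go from a point $y\in B\setminus A$ to a point $x\in A$, and the resulting map $(\tilde{A}_x)_+\to\tilde{B}_y/\bar{A}_y$ sends a path in $A$ beginning at $x$, precomposed with the chosen path from $y$ to $x$, to a path that ends in $A$ and hence represents a point of $\bar{A}_y$; so this map is constant at the base point, and is the zero map on reduced cellular chains. Second, for a representative $c$ of an isomorphism class of objects of $\rfuncat{B}{A}$ --- so $c$ is a point of $A$ or a point of $B\setminus A$ --- choose a CW structure on $B$ in which $A$ is a subcomplex; this gives free $\pi_1$-CW structures on $\tilde{A}_c$ and on $\tilde{B}_c$ with $\bar{A}_c\subset\tilde{B}_c$ a subcomplex, so $C_*(\tilde{A}_c;\bQ)$ and $C_*(\tilde{B}_c/\bar{A}_c;\bQ)=C_*(\tilde{B}_c;\bQ)/C_*(\bar{A}_c;\bQ)$ are bounded complexes that are free in each degree over the group ring $\bZ\rfuncat{B}{A}(c,c)$, and finitely generated because $B$ is compact. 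A bounded complex of finitely generated free modules over a ring is dualizable, with dual the degreewise $\Hom$ into the ring, as recalled in \autoref{symmetricmonoidal} and in the module example of \autoref{dualbicat}. Hence \autoref{explicitdual} applies and produces a dual for $C_*(\runicov{B}{A})$, itself supported on isomorphisms and given on $c$ by the dual of $C_*(\runicov{B}{A})(c)$.

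\emph{Main obstacle.} The only step with real content is the claim that the non-invertible morphisms of $\rfuncat{B}{A}$ act by zero, i.e.\ the observation that a path into $A$, once composed onto a path running from outside $A$ to a point of $A$, ends in $A$ and so lies over $A$ in the universal cover; this is the point I would state most carefully. Everything else is routine bookkeeping of the kind already done in \autoref{comspdual} and \autoref{algspacedual}: that the product CW structure makes cellular chains monoidal for $\wedge$ and that $C_*$ commutes with the bar construction up to natural isomorphism (first approach), and that $\bar{A}_c$ may be taken to be a subcomplex of $\tilde{B}_c$ so the quotient complex is free and finitely generated (second approach).
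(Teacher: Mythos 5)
Your proposal is correct and follows essentially the same two routes the paper takes: the functoriality argument via \autoref{tracefunctor} and \autoref{runivcovdual}, and the direct argument via \autoref{explicitdual} checking the values on a point of $A$ and a point of $B\setminus A$. The only substantive addition is that you explicitly verify the ``supported on isomorphisms'' hypothesis by showing the map $(\tilde{A}_x)_+\to\tilde{B}_y/\bar{A}_y$ lands in the base point, a point the paper asserts without argument.
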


\begin{proof}   Like \autoref{algspacedual} there are two possible proofs of this 
theorem.   

The rational cellular chain complex functor induces a functor
on bicategories, and for $A\subset B$ closed smooth 
manifolds, \autoref{runivcovdual} shows that $\runicov{B}{A}$ is dualizable.  
\autoref{tracefunctor}  then implies that $C_*(\runicov{B}{A})$ is dualizable.

There is a second approach using \autoref{explicitdual}.
If $x\in A$, $C_*(\runicov{B}{A})(x)=C_*(\tilde{A}_x)$ as a module over $\pi_1(A,x)$.
This is a finitely generated free module and so is dualizable with dual 
\[\Hom_{\mathbb{Z}\pi_1(A,x)}(C_*(\tilde{A}_x),\mathbb{Z}\pi_1(A,x)).\]
If $x\in B\setminus A$, $C_*(\runicov{B}{A})(x)=C_*(\tilde{B}_x/\bar{A}_x)$ as a 
module over $\pi_1(B,x)$.  This is also a finitely generated free module and so 
is dualizable with dual \[\Hom_{\mathbb{Z}\pi_1(B,x)}(C_*(\tilde{B}_x/\bar{A}_x),
\mathbb{Z}\pi_1(B,x)).\]  Since $C_*(\runicov{B}{A})$ is supported on 
isomorphisms, \autoref{explicitdual} implies $C_*(\runicov{B}{A})$ is 
dualizable.
\end{proof}

A map $f\colon (B,A)\rightarrow (B,A)$ induces a map 
\[f_*\colon C_*(\runicov{B}{A})\rightarrow C_*(\runicov{B}{A})\odot \mathbb{Z}
\rfuncatf{B}{A}.\]
Since $C_*(\runicov{B}{A})$ is dualizable, the trace of $f_*$ is defined.

\begin{defn} 
The \emph{relative global Reidemeister trace} of $f\colon (B,A)\rightarrow (B,A)$, 
$\rralg{f}{B}{A}$, is the trace of 
\[f_*\colon C_*(\runicov{B}{A})\rightarrow C_*(\runicov{B}{A})\odot \mathbb{Z}
\rfuncatf{B}{A}.\]
\end{defn}

The relative global Reidemeister trace of $f$ is a map 
\[\bZ\rightarrow \bZ\sh{\rfuncatf{B}{A}}.\]

\begin{lem} If $A$ is a proper subset of $B$ then
\[\sh{\rfuncatf{B}{A}}\cong \sh{\funcatf{B}}\amalg \sh{\funcatf{A}}.\]

 The image of 
$\rralg{f}{B}{A}$ under this isomorphism is 
\[\sum_{x\in\sh{\comcatf{A}}}\ralg{f|_{A(x)}}[x]+
\sum_{y\in\sh{\comcatf{B}}}\ralg{f|_{B(y)/(B(y)\cap A)}}[y].\] 
\end{lem}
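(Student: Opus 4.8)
The plan is to mirror the proof of \autoref{splitidx}, replacing the component category and relative component space with the fundamental category and relative universal cover, and replacing the geometric duals with their chain-level analogues. First I would establish the splitting of the shadow. The shadow $\sh{\rfuncatf{B}{A}}$ is the coequalizer of the two actions of $\rfuncat{B}{A}$ on $\rfuncatf{B}{A}$, and since $\rfuncat{B}{A}(x,y)=\varnothing$ whenever $x\in A$ and $y\not\in A$, the indexing coproduct splits into the part with $x,y\in A$ and the part with $x,y\in B\setminus A$. The first piece computes $\sh{\funcatf{A}}$ (the Reidemeister set of $f|_A$) and the second computes $\sh{\funcatf{B}}$; this is exactly the bookkeeping already carried out for the geometric case in \autoref{splitshadgeo}, so I would cite that lemma's argument rather than repeat it, noting only that here we work with the $\Ch_\bQ$-enriched shadow so the statement is about the sets underlying the free abelian groups.

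Next I would identify the trace under this splitting. The key input is that the splitting of $\sh{\rfuncatf{B}{A}}$ is compatible with the description of the dual of $C_*(\runicov{B}{A})$: by the second proof of the preceding proposition, $C_*(\runicov{B}{A})$ is supported on isomorphisms and, via \autoref{specialtensor} and \autoref{explicitdual}, the relevant $\odot$-products and shadows decompose as a direct sum over $B(\rfuncat{B}{A})$ — concretely over one object $x\in A$ and one object $y\in B\setminus A$ in the connected case. The coevaluation and evaluation for $C_*(\runicov{B}{A})$ were assembled (as in \autoref{comspdual} and \autoref{runivcovdual}) from a fold map together with the coevaluation/evaluation for the pieces $C_*(\tilde A_x)$ over $\bZ\pi_1(A,x)$ and $C_*(\tilde B_y/\bar A_y)$ over $\bZ\pi_1(B,y)$. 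Since $f_*$ also respects this decomposition (the $x\in A$, $y\not\in A$ off-diagonal block is forced to be trivial by naturality, just as in \autoref{comspdual}), the trace computes blockwise: the projection to the first summand is the Stallings trace of $\widetilde{f|_{A(x)}}$ acting on $C_*(\tilde A_x)$ over $\bZ\pi_1(A,x)$, which is by definition $\ralg{f|_{A(x)}}$, and the projection to the second summand is the Stallings trace of $\widetilde{f|_{B(y)/(B(y)\cap A)}}$ on $C_*(\tilde B_y/\bar A_y)$ over $\bZ\pi_1(B,y)$, i.e. $\ralg{f|_{B(y)/(B(y)\cap A)}}$.

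For the general (non-connected) case I would sum over $B(\rfuncat{B}{A})$: the objects of $A$ contribute the terms indexed by $x\in\sh{\comcatf{A}}$ and the objects of $B\setminus A$ contribute the terms indexed by $y\in\sh{\comcatf{B}}$, giving the stated formula. The main obstacle, as in the geometric case, is verifying that the chosen coevaluation and evaluation for $C_*(\runicov{B}{A})$ really do split as the fold of the pieces — i.e. that the off-diagonal block $D(\widetilde{A(x)_+})\odot (\tilde B_y/\bar A_y) \to S^0$ forced by the construction is trivial and that no cross terms enter the trace. This is handled exactly by the naturality argument used in \autoref{comspdual}: the relevant hom-object $\rfuncat{B}{A}(x,y)$ with $x\in A$, $y\not\in A$ is empty, so the only nontrivial components of $\epsilon$ and $\eta$ are the two diagonal ones, and functoriality of the trace (\autoref{tracefunctor}, or directly the block form of the trace composite) then gives the direct-sum formula. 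So I would write the proof as: (1) cite \autoref{splitshadgeo} for the shadow splitting; (2) invoke the explicit dual from the preceding proposition together with \autoref{specialtensor}/\autoref{explicitdual} to get the blockwise decomposition of $\eta$, $\epsilon$, and $f_*$; (3) read off the blockwise trace as the relevant Stallings traces, which are $\ralg{f|_{A(x)}}$ and $\ralg{f|_{B(y)/(B(y)\cap A)}}$ by definition.
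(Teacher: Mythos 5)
Your proposal is correct and follows essentially the same route as the paper: split the shadow coequalizer using the emptiness of $\rfuncat{B}{A}(x,y)$ when $x\in A$, $y\notin A$ (the paper does this directly by indexing over isomorphism-class representatives rather than citing \autoref{splitshadgeo}, but the argument is the same), then observe the splitting is compatible with the blockwise dual pair so the trace is the pair of Stallings traces. Your proposal is somewhat more explicit than the paper's proof — the paper compresses the second half into the single sentence that the isomorphism ``is compatible with the description of the dual pair'' — but the underlying ideas (supported-on-isomorphisms decomposition via \autoref{specialtensor}/\autoref{explicitdual}, vanishing of cross terms by naturality) are exactly what the paper is invoking.
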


Here $\ralg{f|_{A(x)}}$ denotes the usual global Reidemeister trace
of $f|_{A(x)}$ as defined by Husseini in \cite{Husseini}.  The invariant $\ralg{f|_{B(y)}\cup Cf|_{B(y)\cap A}}$ 
is the  trace of
\[f_*\colon C_*({B(y)}/(B(y)\cap A))\rightarrow C_*({B(y)}/(B(y)\cap A))
\otimes \pi_1^f(B,y)\]
as a module over $\pi_1(B,y)$.

\begin{proof}To simplify notation, consider the case where $A$ and $B$ are connected.
The proof is similar if $A$ and $B$ are not connected.

The shadow is defined to be the coequalizer of the maps 
\[\xymatrix{ \amalg_{x,y}  \rfuncat{B}{A}(x,y)\times \rfuncat{B}{A}(f(y),x)
\ar@<.5ex>[r]\ar@<-.5ex>[r]
&\amalg_x \rfuncat{B}{A}(f(x),x)
}.\]
Instead of indexing these coproducts over all objects in $\rfuncat{B}{A}$ we can index 
over representatives of each isomorphism
class of objects in $\rfuncat{B}{A}$.  This gives four terms in the first coproduct.  The two 
cross terms are both empty and so this coequalizer splits into the coequalizer that 
defines $\sh{\funcatf{B}}$ and the coequalizer that defines $\sh{\funcatf{A}}$.

For the second statement, note that this isomorphism is compatible with 
the description of the dual pair.  Then the trace is the pair of classical traces.
\end{proof}

This description of the relative global Reidemeister trace shows that the second 
coordinate is the the relative Reidemeister trace of \cite{Zhao06}.  This also shows
that this invariant is related to, but not the same as,  the generalized Lefschetz number defined
 in \cite{NOW94}. 

\begin{prop}\label{alggeocompare}
If $A\subset B$ are closed smooth manifolds and 
$f\colon (B,A)\rightarrow (B,A)$ is a relative map then 
\[\rrgeo{f}{B}{A}=\rralg{f}{B}{A}.\]
\end{prop}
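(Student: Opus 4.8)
The plan is to deduce the equality from functoriality of the bicategorical trace, \autoref{tracefunctor}, applied to the shadow functor induced by the rational cellular chain complex, in exact parallel with the way \autoref{algspacedual} and the equality $\rlnumb{f}{B}{A}=\relidx{f}{B}{A}$ were deduced from \autoref{symtracefunctor} and the rational homology functor. The key observation is that $\rralg{f}{B}{A}$ and $\rrgeo{f}{B}{A}$ are traces of the \emph{same} map $f_*\colon\runicov{B}{A}\to\runicov{B}{A}\odot\rfuncatf{B}{A}$, one computed after passing to rational cellular chains; so it is enough to know that this passage is compatible with duality and trace.

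First I would set up the shadow functor. Rational cellular chains define a lax symmetric monoidal functor $\Top_*\to\Ch_\bQ$ that carries wedges to direct sums, carries geometric realizations of simplicial spaces to total complexes, and whose comparison maps $C_*(X;\bQ)\otimes C_*(Y;\bQ)\to C_*(X\wedge Y;\bQ)$ are quasi-isomorphisms for CW complexes $X$ and $Y$ by the Eilenberg--Zilber theorem. Applying it pointwise to enriched categories, distributors, and enriched natural transformations induces a shadow functor $F\colon\sE_{\Top_*}\to\sE_{\Ch_\bQ}$, equipped with the comparison map $\psi\colon\sh{F(-)}\to F(\sh{-})$: it carries the bar construction $B(\sX,\sA,\sY)$ and the cyclic bar construction defining the shadow to their chain-level analogues up to quasi-isomorphism, and the coherence diagrams of a shadow functor commute by naturality of Eilenberg--Zilber. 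Under $F$ the relative fundamental category $\rfuncat{B}{A}$, regarded as enriched in $\Top_*$ in the evident way (its hom-objects are the discrete sets of homotopy classes of paths with disjoint basepoints), goes to the $\Ch_\bQ$-category with hom-objects $\bQ\rfuncat{B}{A}(x,y)$ in degree zero; the relative universal cover $\runicov{B}{A}$ goes to $C_*(\runicov{B}{A})$; the dual of \autoref{runivcovdual} goes to the dual used to define $\rralg{f}{B}{A}$; and $\rfuncatf{B}{A}$ goes to $\bQ\rfuncatf{B}{A}$.

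Next I would invoke \autoref{tracefunctor} with $X=\runicov{B}{A}$, $Y$ its dual, $P=\rfuncatf{B}{A}$, and the 2-cell $f_*$. Since all of these 1-cells are built from CW complexes, the comparison maps $F(X)\odot F(Y)\to F(X\odot Y)$, $F(X)\odot F(P)\to F(X\odot P)$, and $U_{F(\rfuncat{B}{A})}\to F(U_{\rfuncat{B}{A}})$ are quasi-isomorphisms, so the hypotheses hold; the conclusion is that $\tr(\hat f_*)$, which by definition is $\rralg{f}{B}{A}$, agrees under $\psi$ with $F$ applied to the trace $\tr(f_*)=\rrgeo{f}{B}{A}$. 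Because $F$ sends stable equivalences to quasi-isomorphisms, $F$ applied to the stable homotopy class $\rrgeo{f}{B}{A}\colon S^0\to\sh{\rfuncatf{B}{A}}$ is its image under $\pi_0^s(\sh{\rfuncatf{B}{A}})\to H_0(\sh{\rfuncatf{B}{A}};\bQ)$, and $\psi$ is a quasi-isomorphism, so \autoref{tracefunctor} says precisely that $\rralg{f}{B}{A}$ equals this image. Under the identifications $\pi_0^s(\sh{\rfuncatf{B}{A}})\cong\bZ\sh{\rfuncatf{B}{A}}$ (already used in \autoref{splitshadgeo}) and $H_0(\sh{\rfuncatf{B}{A}};\bQ)\cong\bQ\sh{\rfuncatf{B}{A}}$ this map is the inclusion $\bZ\sh{\rfuncatf{B}{A}}\hookrightarrow\bQ\sh{\rfuncatf{B}{A}}$; it is injective, $\rrgeo{f}{B}{A}$ already lies in $\bZ\sh{\rfuncatf{B}{A}}$ by \autoref{altgeoform}, and $\rralg{f}{B}{A}$ does too because Husseini's global Reidemeister trace is integral, so the equality in $\bQ\sh{\rfuncatf{B}{A}}$ forces $\rrgeo{f}{B}{A}=\rralg{f}{B}{A}$.

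The main obstacle is constructing $F$ and checking that its structure map $\psi$ and the three comparison maps above are quasi-isomorphisms on the distributors at hand --- that is, that cellular chains commute with the bar and cyclic bar constructions over a category with discrete morphism spaces, and that the coevaluations and evaluations of \autoref{runivcovdual}, assembled from Pontryagin--Thom collapses and fiberwise cones, are carried by $C_*(-;\bQ)$ to the coevaluations and evaluations defining $\rralg{f}{B}{A}$. This is the relative analogue of the comparison carried out for a single space in \cite{thesis} and \cite{shadows}, and the work is essentially bookkeeping. If one prefers to bypass $F$, one can instead reduce to the connected case and observe, as in \autoref{comspdual} and \autoref{runivcovdual}, that over each isomorphism class of objects the dual pairs for $\runicov{B}{A}$ and for $C_*(\runicov{B}{A})$ restrict to the dual pairs for $\tilde A_+$ over $\pi_1(A)$ and for $\tilde B/\bar A$ over $\pi_1(B)$ and their rational chains; the equality then follows from the non-relative comparison of the geometric and global Reidemeister traces applied to $f|_A$ and to the induced self-map of $\tilde B/\bar A$, with the general case handled exactly as in the proof of \autoref{splitshadgeo}.
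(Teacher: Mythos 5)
Your proof takes essentially the same route as the paper, which simply observes that both invariants are traces of the same 2-cell $f_*$ and cites \autoref{tracefunctor} together with the fact that rational cellular chains give a strong symmetric monoidal (hence shadow) functor. Two small refinements: the hypothesis of \autoref{tracefunctor} calls for genuine \emph{isomorphisms}, not quasi-isomorphisms, and the cellular chain functor does supply them on CW smash products, so your appeal to Eilenberg--Zilber undersells what is available; and once those comparison maps are isomorphisms the final rationalization step --- passing to $H_0(-;\bQ)$, invoking injectivity of $\bZ\sh{\rfuncatf{B}{A}}\hookrightarrow\bQ\sh{\rfuncatf{B}{A}}$, and arguing separately that $\rralg{f}{B}{A}$ is integral --- is unnecessary, since the conclusion of \autoref{tracefunctor} already gives the equality directly. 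Your alternative sketch at the end, reducing to the non-relative comparison on each isomorphism class of objects, is a legitimate second route the paper does not spell out but is implicit in the structure of \autoref{runivcovdual}.
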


\begin{proof}
Since both $\rrgeo{f}{B}{A}$ and $\rralg{f}{B}{A}$ are defined as 
traces and the rational cellular chain complex functor is strong
symmetric monoidal this proposition follows from \autoref{tracefunctor}.
\end{proof}
\section{A converse to the relative Lefschetz fixed point theorem}\label{converse}

There are several proofs of the converse to the relative Lefschetz fixed 
point theorem.  Some,  like
\cite{Jezierski95, schirmer86, Schirmer88, Zhao01}, are generalizations
of the simplicial arguments used in the standard proof of the converse
to the classical Lefschetz fixed point theorem, see \cite{Brownbook}.

In this section and the next section we describe a proof of the converse to the relative Lefschetz fixed 
point theorem that follows the outline of \cite{KW, KW2}.  This approach is not simplicial
and it easily generalizes.  For example, see \cite{KW2} for the equivariant generalization
and \cite{thesis} for the fiberwise generalization.  

The approach of \cite{KW} is based on invariants that detect sections of fibrations. 
In the next section we prove relative generalizations of the results in \cite{KW}.  In this 
section we apply those results to relative fixed point invariants. 

The main result of this section is:

\begin{thmB}[The converse to the Relative Lefschetz fixed point theorem] 
Suppose $A\subset B$ are closed smooth manifolds of dimension at least 3 and
the codimension of $A$ in $B$ is at least 2.  
The relative global Reidemeister trace of a map \[f\colon (B,A)\rightarrow 
(B,A)\] is zero if and only if $f$ is relatively homotopic to 
a map with no fixed points. 
\end{thmB}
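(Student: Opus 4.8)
The plan is to follow the strategy of Klein and Williams \cite{KW}, adapted to the relative setting, reducing the statement to a sequence of obstruction-theoretic claims about the relative fixed point situation. The forward direction is immediate: if $f$ is relatively homotopic to a fixed point free map $g$, then by homotopy invariance $\rralg{f}{B}{A}=\rralg{g}{B}{A}$, and the trace of a fixed point free map vanishes because the relevant fixed point data (the transfers $\tau_{U_A}(f|_A)$ and $\tau_{U_B}(f)$ of \autoref{splitshadgeo}, or equivalently the indices $i_\alpha$, $i_\beta^{rel}$ of \autoref{altgeoform}) are all empty. Using \autoref{alggeocompare}, $\rralg{f}{B}{A}=\rrgeo{f}{B}{A}$, so it suffices to work with the geometric Reidemeister trace throughout.

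For the converse, the idea is to first remove the fixed points in $A$ and then, fixing a taut representative, remove the fixed points in $B\setminus A$ without creating new fixed points in $A$. The hypothesis that $\rralg{f}{B}{A}=0$ splits, via the isomorphism $\sh{\rfuncatf{B}{A}}\cong\sh{\funcatf{A}}\amalg\sh{\funcatf{B}}$, into the vanishing of the absolute Reidemeister trace of $f|_A$ and the vanishing of the "relative" part $\sum i_\beta^{rel}\beta$. Since $A$ is a closed smooth manifold of dimension at least $3$, the classical converse to the Lefschetz fixed point theorem applies to $f|_A$: it is homotopic to a fixed point free map $g_A\colon A\to A$. The first step is therefore to extend this homotopy over $B$ (possible since $A\subset B$ is a cofibration and we only need the homotopy as a relative homotopy of $f$) and replace $f$ by a relatively homotopic map that is taut and fixed point free on $A$, hence fixed point free on a neighborhood $N(A)$ of $A$.

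The second and main step is to clear the remaining fixed points, which now all lie in the open manifold $B\setminus A$, using the relative analogue of the Klein–Williams obstruction developed in the next section (Section \ref{relsec}). Here the codimension hypothesis enters: because $A$ has codimension at least $2$ in $B$, the inclusion $B\setminus A\hookrightarrow B$ is $1$-connected, so $\pi_1(B\setminus A)\to\pi_1(B)$ is surjective and fixed point classes of $f$ in $B\setminus A$ are detected by $\pi_0^s(\Lambda^f B)$ exactly as the relative trace records them; this is what makes $\sum i_\beta^{rel}\beta$ the total obstruction. The vanishing of this class means the relative Klein–Williams invariant (a section-detecting obstruction living in a twisted stable cohomotopy group) is trivial, and a general-position argument — available since $\dim(B\setminus A)=\dim B\ge 3$ — converts the nullhomotopy of this obstruction into an honest homotopy of $f|_{B\setminus A}$ to a fixed point free map that can be taken to agree with $f$ near $A$, hence assembles to a relative homotopy of $f$ to a fixed point free relative map. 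I expect the main obstacle to be precisely this last step: setting up the relative obstruction so that it genuinely lives over $B\setminus A$ (not all of $B$), verifying that tautness lets one do the surgery on fixed points inside $B\setminus A$ without disturbing $A$, and checking that the dimension and codimension hypotheses are exactly what is needed for the general-position/Whitney-trick arguments that convert a stable-homotopy-theoretic vanishing into a geometric one — all of which is deferred to, and proved in, Section \ref{relsec}.
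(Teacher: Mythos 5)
Your forward direction and the general reduction are correct and agree with the paper: homotopy invariance, the identification $\rralg{f}{B}{A}=\rrgeo{f}{B}{A}$ of \autoref{alggeocompare}, the splitting of the trace via $\sh{\rfuncatf{B}{A}}\cong\sh{\funcatf{A}}\amalg\sh{\funcatf{B}}$, and the reduction to a Klein--Williams section-detecting obstruction are all the right moves.

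The gap is in the converse direction, specifically in where the second obstruction lives. You propose a two-step surgery: first kill the fixed points of $f|_A$ via the absolute converse, then kill the remaining fixed points in $B\setminus A$ using a Klein--Williams obstruction that ``genuinely lives over $B\setminus A$ (not all of $B$),'' deferring the construction to \autoref{relsec}. But \autoref{relsec} does not build such an object, and the paper's argument is structured differently: \autoref{maptolift} and \autoref{lifttosect} convert the fixed-point problem into a \emph{single} relative section problem for a pair of fibrations $(\Gamma_{f*}N(i_B),\Gamma_{f|_A*}N(i_A))\to(B,A)$; the relative Euler class $\chi$ sits in $[(S^0_B,A\amalg B),(S_BE_B,S_{A,B}E_A)]_B$, and \autoref{chitochi} shows it vanishes iff the two \emph{absolute} components $\chi_A$ (over $A$) and $\chi_B$ (over all of $B$, not $B\setminus A$) both vanish. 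These are then identified with $\iota_A\tau_{U_A}(f|_A)$ in $\pi_0^s(\Lambda^{f|_A}A_+)$ and $\iota(\tau_{U_B}(f))+\iota(\tau_{U_A}(f|_A))$ in $\pi_0^s(\Lambda^f B_+)$ in \autoref{htpykwcompare}; note the second is the \emph{total} transfer of $f$ on $B$, not a transfer supported on $B\setminus A$, and a short bookkeeping argument using \autoref{splitshadgeo} then shows this pair vanishes iff the two components of $\rrgeo{f}{B}{A}$ vanish.

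The codimension hypothesis also enters differently than you describe. You invoke surjectivity of $\pi_1(B\setminus A)\to\pi_1(B)$, but the paper explicitly says it does not know how to run the argument from that hypothesis; instead the codimension bound is used to guarantee that $\Gamma_{f*}N(i_B)\to B$ is sufficiently highly connected relative to $\dim A$ so that \autoref{bypass} kills the group $[\Sigma_B(A\amalg B),S_BE_B]_B$ in a long exact sequence, making the forgetful map $\rho$ injective and hence making $(\chi_A,\chi_B)=0$ imply $\chi=0$ (\autoref{injectchi} and \autoref{chitochi}). To rescue your version you would need a relative Klein--Williams theorem on the compact manifold-with-boundary obtained by deleting an open tubular neighborhood of $A$, rel its boundary, and you would need to check that this produces a homotopy extending by the identity over the tube without creating new fixed points in $A$; none of this is set up in the paper or sketched in your proposal, so as written the converse direction is not complete.
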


The first step in the proof of Theorem B is to describe relative maps without fixed points 
in terms of relative sections.

\begin{lem}\label{maptolift} Let $A\subset B$ be closed smooth manifolds.  
Relative homotopies of a map $f\colon (B,A)\rightarrow (B,A)$  to a relative map 
with no fixed points
correspond to liftings that make 
the diagram below commute up to relative homotopy 
\[\xymatrix{&(B\times B \setminus  \triangle, A\times A \setminus \triangle)\ar[d]\\
(B,A)\ar[r]^{\Gamma_f}\ar@{.>}[ur]&(B\times B,A\times A).}\]
\end{lem}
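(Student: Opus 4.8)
The plan is to set up the standard correspondence between fixed-point-free homotopies and sections of the complement of the diagonal, being careful to track the subspace $A$ throughout. First I would recall that for a map $g\colon B\to B$ the graph $\Gamma_g\colon B\to B\times B$, $b\mapsto (b,g(b))$, has image in $B\times B\setminus\triangle$ precisely when $g$ has no fixed points; and when $g$ is a relative map (so $g(A)\subset A$), $\Gamma_g$ restricts to $A\to A\times A$, landing in $A\times A\setminus\triangle$ exactly when $g|_A$ has no fixed points. So a relative map $g$ with no fixed points has $g|_A$ with no fixed points automatically, and $\Gamma_g$ factors through the pair $(B\times B\setminus\triangle,\,A\times A\setminus\triangle)$ as a lift of $\Gamma_f$ along the inclusion in the diagram — provided of course $g$ is relatively homotopic to $f$, which gives a relative homotopy $\Gamma_f\simeq\Gamma_g$ over $(B\times B,A\times A)$.

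Conversely, given a lift $\ell\colon (B,A)\to (B\times B\setminus\triangle,\,A\times A\setminus\triangle)$ making the diagram commute up to relative homotopy, I would post-compose with the projection $\mathrm{pr}_2\colon B\times B\to B$ to get a candidate map $g=\mathrm{pr}_2\circ\ell\colon B\to B$; since $\ell$ respects the subspaces, $g$ is a relative map. The relative homotopy between $\ell$ composed into $(B\times B,A\times A)$ and $\Gamma_f$ gives, after applying $\mathrm{pr}_1$ and $\mathrm{pr}_2$, a relative homotopy from $g$ to $f$ (the $\mathrm{pr}_1$-component of $\Gamma_f$ is the identity, so one first uses the homotopy to deform $\ell$ to an honest lift of $\Gamma_f$, i.e. one whose first coordinate is the identity, then reads off the second coordinate); and because $\ell$ avoids both diagonals, $g$ and $g|_A$ are fixed-point free. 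The only subtlety is the deformation step: a lift that agrees with $\Gamma_f$ only up to homotopy in the base must be rectified to a genuine lift of $\Gamma_f$, which is where one invokes that the vertical map is a fibration (or at least a Hurewicz fibration relative to the pair, using that $A\subset B$ is a cofibration so the pair $(B\times B,A\times A)$ behaves well) and lifts the homotopy; I would cite the relative homotopy lifting property here.

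The main obstacle is purely bookkeeping: making sure every homotopy and lift is \emph{relative}, i.e. compatible with the $A$-level, so that one never leaves the category of pairs. Concretely, the homotopy lifting used to rectify $\ell$ must be performed simultaneously on $B$ and on $A$ in a compatible way, which is where the cofibration hypothesis on $A\subset B$ and the fact that $\triangle_A = \triangle_B\cap(A\times A)$ are used; once this is in place, the two constructions $g\mapsto\Gamma_g$ and $\ell\mapsto\mathrm{pr}_2\circ\ell$ are visibly inverse to each other up to relative homotopy, and the equivalence of the two sets (relative fixed-point-free homotopies versus relative liftings) follows. I expect the proof to be short, with the homotopy-lifting rectification being the only point requiring care.
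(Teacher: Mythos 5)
Your proposal is correct and follows essentially the same route as the paper. The forward direction is identical, and in the converse direction you correctly identify that the crux is rectifying the lift so its first coordinate becomes the identity, via the relative homotopy lifting property (the paper carries this out in two steps — first over $A$ using that $\proj_1\colon A\times A\setminus\triangle\to A$ is a fibration, then extending to $B$ using that $A\subset B$ is a cofibration and $\proj_1\colon B\times B\setminus\triangle\to B$ is a fibration, citing Str{\o}m), after which $g$ is read off from the second coordinate exactly as you describe.
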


The function $\Gamma_f$ is the graph of $f$ and $
\Gamma_f(m)=(m,f(m))$.

\begin{proof}
If $f$ is relatively homotopic to a fixed point free map $g$  via a relative 
homotopy $H$, then $\Gamma_H$ is a relative homotopy from $\Gamma_f$ to  
$\Gamma _g$.

For the converse, suppose there is a relative map \[k\colon (B,A)\rightarrow 
(B\times B\setminus \triangle, A\times A\setminus \triangle)\] and a relative homotopy 
$K$ from $k$ to $\Gamma_f$.  

If $A$ is a smooth manifold the first coordinate projection 
\[\proj_1\colon A\times A\setminus \triangle\rightarrow A\] is a fiber bundle and there is a lift $J_A$ in 
the diagram 
\[\xymatrix{A\ar[r]^-k\ar[d]^{i_0}&A\times A\setminus \triangle\ar[d]^{\proj_1}\\
A\times I\ar[r]^-{\proj_1K}\ar@{.>}[ur]^{J_A}&A.}\] 

Since $A\subset B$ is a cofibration and $\proj_1\colon B\times B\setminus \triangle \rightarrow 
B$ is a fibration the diagram 
\[\xymatrix@C=3cm{B \cup A\times I \ar[r]^-{k\cup J_A}\ar[d]^{i_0}
&B\times B\setminus \triangle\ar[d]^{\proj_1}\\
B\times I \ar[r]_-{\proj_1\circ K}\ar@{.>}[ur]^-J&B}\]
has a lift $J$ extending the lift $J_A$ above, see \cite[Theorem 4]{Strom1966}.
Note that  $\proj_1\circ J(-, 1)=\id$.  Let $g=\proj_2J(-, 1)$.
This map has no fixed points.

The homotopies $K$ and $J$ define a relative homotopy from 
$\Gamma_f$ to $\Gamma_g$.
\end{proof}

Given a map $f\colon V\rightarrow Y$, let $\fibm(f)\colon \fibs(f)\rightarrow Y$ denote a Hurewicz fibration such that 
\[\xymatrix{V\ar[r]\ar[dr]_f&\fibs(f)\ar[d]^-{\fibm(f)}\\
&Y}\] commutes and $V\rightarrow \fibs(f)$ is an equivalence.

\begin{lem}\label{lifttosect}
Let $X\subset Y$,  $p\colon M_Y\rightarrow
Y$ be a space over $Y$ and $M_X\subset p^{-1}(X)$.  

Liftings up to relative homotopy in the diagram  \[\xymatrix{&(M_Y,M_X)\ar[d]^f\\
(B,A)\ar@{.>}[ur]\ar[r]^g&(Y,X)}\] correspond to relative sections of the pair of fibrations 
\[(g^*\fibs(f_Y),g^*\fibs(f_X)) \rightarrow (B,A).\] 
\end{lem}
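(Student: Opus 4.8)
The plan is to reduce the relative statement to the standard (absolute) fact that homotopy classes of lifts of a map through $f_Y$ are classified by sections of the pulled-back mapping path fibration $g^*\fibs(f_Y)\to B$, to apply this over $B$ and over $A$ simultaneously, and then to check that the two constructions are compatible with the inclusions $A\subset B$, $M_X\subset M_Y$, $X\subset Y$. Write $f_Y\colon M_Y\to Y$ and $f_X\colon M_X\to X$ for the two components of $f$.

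First I would fix the functorial model
\[\fibs(f_Y)=\{(v,\gamma)\in M_Y\times Y^I : \gamma(0)=f_Y(v)\},\qquad \fibm(f_Y)(v,\gamma)=\gamma(1),\]
together with the analogous $\fibs(f_X)$ built from $f_X$. Naturality of this construction in the pair of inclusions $(M_X\hookrightarrow M_Y,\,X\hookrightarrow Y)$ gives an inclusion $\fibs(f_X)\hookrightarrow\fibs(f_Y)$ whose image lies in $\fibm(f_Y)^{-1}(X)$, since a path in $X$ ends in $X$. Pulling back along $g\colon B\to Y$ and $g|_A\colon A\to X$, a \emph{relative section} of $(g^*\fibs(f_Y),g^*\fibs(f_X))\to(B,A)$ then means a section $s\colon B\to g^*\fibs(f_Y)$ whose restriction $s|_A$ factors through $g^*\fibs(f_X)\subset (g^*\fibs(f_Y))|_A$. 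I would also record that $v\mapsto(v,\mathrm{const}_{f_Y(v)})$ and $(v,\gamma)\mapsto v$ are mutually inverse homotopy equivalences $M_Y\leftrightarrows\fibs(f_Y)$ carrying $M_X$ to $\fibs(f_X)$ and back, and that $\fibm(f_Y)$ is a Hurewicz fibration.

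Next comes the dictionary. Given a relative lift $\ell\colon(B,A)\to(M_Y,M_X)$ of $g$ together with a relative homotopy $H\colon B\times I\to Y$ from $f_Y\circ\ell$ to $g$ with $H(A\times I)\subset X$, the assignment $b\mapsto(\ell(b),H(b,-))$ is a section of $g^*\fibs(f_Y)$; for $a\in A$ the pair $(\ell(a),H(a,-))$ lies in $\fibs(f_X)$ because $\ell(a)\in M_X$ and $H(a,-)$ is a path in $X$ starting at $f_X(\ell(a))$, so this is a relative section. Conversely, composing a relative section with the projection $\fibs(f_Y)\to M_Y$ yields a relative map $(B,A)\to(M_Y,M_X)$, and its path coordinate supplies a relative homotopy showing it lifts $g$ up to relative homotopy. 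I would then verify that these assignments descend to, and are mutually inverse on, the relevant homotopy classes: a relative homotopy of relative lifts, parametrised over $B\times I$, is carried by the very same formula to a vertical relative homotopy of relative sections, and conversely. Since the absolute versions of both assignments are the classical ones used in \cite{KW}, the only new point is compatibility with the $A$-part, which is immediate because every formula above respects the subspaces $A\subset B$, $M_X\subset M_Y$, $X\subset Y$ by construction.

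The only real content beyond bookkeeping is that the comparison over $B$ and the comparison over $A$ must be induced by one and the same construction, so that they glue; concretely, one must choose the functorial path-space model once and for all so that $g^*\fibs(f_X)$ is literally a sub-fibration of $(g^*\fibs(f_Y))|_A$, which is exactly what this model provides. No cofibration hypothesis is needed for this lemma; it enters only in \autoref{maptolift} and in the following section. In particular a relative section exists if and only if $g$ lifts through $f$ up to relative homotopy, which is the form of the statement used in the proof of Theorem~B.
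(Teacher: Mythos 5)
Your proof is correct, and in fact the paper gives no proof of this lemma at all—it is stated as a routine consequence of the definitions and then used immediately, so there is nothing internal to compare against. Your argument via the explicit mapping-path-space model $\fibs(f_Y)=\{(v,\gamma)\in M_Y\times Y^I:\gamma(0)=f_Y(v)\}$, with $\fibm(f_Y)=\mathrm{ev}_1$, is the standard way to make the statement precise, and the dictionary $b\mapsto(\ell(b),H(b,-))$ between (lift, witnessing homotopy) pairs and sections of $g^*\fibs(f_Y)\to B$ is exactly the classical correspondence used in \cite{KW}. The one genuinely non-bookkeeping point you raise—that the fibration replacements for $f_Y$ and $f_X$ must be chosen compatibly, so that $\fibs(f_X)$ is literally a sub-fibration of $\fibs(f_Y)$ sitting over $X$—is a real gap in the paper's terse definition of $\fibs(\cdot)$ as "some Hurewicz fibration replacement": a priori, independent choices for $f_Y$ and $f_X$ need not nest, and your remark that the functorial path-space model resolves this is the right fix. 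One small caution: the phrase "liftings up to relative homotopy correspond to relative sections" is used in the paper only in the weak form (one exists iff the other does), which is all your argument strictly needs for \autoref{relconverse}; your stronger claim of a bijection on homotopy classes requires interpreting a "lift up to homotopy" as the pair $(\ell,H)$ rather than $\ell$ alone, as you implicitly do, since it is relative fiberwise homotopy classes of maps $(B,A)\to(\fibs(f_Y),\fibs(f_X))$ over $(Y,X)$ that biject with vertical homotopy classes of relative sections.
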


If $p\colon E\rightarrow B$ is a Hurewicz fibration the \emph{unreduced fiberwise 
suspension} of $p$ is the double mapping cylinder \[S_BE\coloneqq B\times \{0\}\cup_p E\times [0,1]
\cup_p B\times \{1\}.\]  The map $p\colon E\rightarrow B$ defines a fibration 
\[S_BE\rightarrow B.\]  There are two sections of this fibration, $\sect_1, 
\sect_2\colon B\rightarrow S_BE$, defined by the inclusions of $B\times \{0\}$ and $B\times \{1\}$.
If $S_B^0\coloneqq B\amalg B$, these sections define an element of 
\[[S_B^0, S_BE]_B.\]

Let $i_B\colon B\times B\setminus \triangle\rightarrow B\times B$ be the inclusion.
Then the pair of fibrations \[(\Gamma_{f*}(N(i_B)),\Gamma_{f*}(N(i_A))) \rightarrow (B,A)\] 
determine an element in 
\[[S_B^0, S_B\Gamma_{f*}(N(i_B))]_{B}\oplus
[S_A^0, S_A\Gamma_{f*}(N(i_A))]_{A}.\]  This element will be denoted $\rrkw{f}{B}{A}$.

\begin{prop}\label{relconverse}
Let  $A\subset B$ be closed smooth manifolds of dimension at least $3$
such that the codimension of $A$ in $B$ is at least 2. 
A continuous map \[f\colon (B,A)\rightarrow (B,A)\]
 is relatively homotopic to a map with no fixed points 
if and only if $\rrkw{f}{B}{A}=0$.
\end{prop}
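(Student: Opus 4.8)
The plan is to chain the two preceding lemmas and then appeal to a relative version of Klein and Williams's section criterion, which is established in \autoref{relsec}. First I would apply \autoref{maptolift}: the map $f$ is relatively homotopic to a map with no fixed points if and only if $\Gamma_f$ admits a lift, up to relative homotopy, through $(B\times B\setminus\triangle,\,A\times A\setminus\triangle)\to(B\times B,A\times A)$. Then I would apply \autoref{lifttosect} with $X=A\times A\subset Y=B\times B$, with $M_Y=B\times B\setminus\triangle$ and $M_X=A\times A\setminus\triangle$, and with $g=\Gamma_f$, which restricts on $A$ to $\Gamma_{f|_A}$ because $f$ is a relative map: such a lift exists if and only if the pair of fibrations $(\Gamma_{f*}N(i_B),\Gamma_{f*}N(i_A))\to(B,A)$ --- the pullbacks along $\Gamma_f$ of the fibrant replacements of the inclusions $i_B$ and $i_A$ --- admits a relative section. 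This is exactly the pair of fibrations whose two canonical sections define $\rrkw{f}{B}{A}$, so the theorem reduces to the claim that this pair admits a relative section if and only if $\rrkw{f}{B}{A}=0$.

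To see that this claim lies within the scope of the relative section criterion I would first compute the fibers. The homotopy fiber of $i_B\colon B\times B\setminus\triangle\hookrightarrow B\times B$ is contractible over a point off the diagonal, and over a point of $\triangle$ it is homotopy equivalent to $S^{\dim B-1}$, by way of the tubular neighborhood of $\triangle$, whose normal bundle is $TB$. Since $\Gamma_f$ meets $\triangle$ exactly along $\mathrm{Fix}(f)$, the fibration $\Gamma_{f*}N(i_B)\to B$ thus has contractible fibers away from $\mathrm{Fix}(f)$ and fibers homotopy equivalent to $S^{\dim B-1}$ over $\mathrm{Fix}(f)$; in particular every fiber is $(\dim B-2)$-connected. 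The same computation over $A$ shows $\Gamma_{f*}N(i_A)\to A$ has $(\dim A-2)$-connected fibers, and over a point of $\mathrm{Fix}(f)\cap A$ the fiber inclusion $\Gamma_{f*}N(i_A)\hookrightarrow(\Gamma_{f*}N(i_B))|_A$ is the standard inclusion $S^{\dim A-1}\hookrightarrow S^{\dim B-1}$ induced by $TA\hookrightarrow TB|_A$, which is $(\dim A-1)$-connected.

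Now I would invoke the relative section criterion of \autoref{relsec}, the relative analogue of \cite{KW,KW2}: for a pair of Hurewicz fibrations over a pair of closed smooth manifolds $(B,A)$ whose fibers are $(\dim B-2)$- and $(\dim A-2)$-connected respectively, a relative section exists if and only if the two canonical sections $\sect_1,\sect_2$ of the fiberwise suspensions agree up to relative fiberwise homotopy over $B$ and over $A$ --- equivalently, if and only if the class they determine in $[S_B^0,S_B\Gamma_{f*}N(i_B)]_B\oplus[S_A^0,S_A\Gamma_{f*}N(i_A)]_A$ vanishes. By construction this class is $\rrkw{f}{B}{A}$, so together with the two reductions above this completes the proof. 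The hypotheses $\dim B\ge 3$ and $\dim A\ge 3$ are exactly what place the section problems over $B$ and over $A$ in the metastable range $n\le 2c+1$ (with $c=\dim B-2$, resp.\ $c=\dim A-2$) in which the fiberwise suspension class is a complete obstruction; the codimension hypothesis is used in \autoref{relsec} to control the \emph{relative} part of the argument, for instance to ensure that $A$ does not locally separate $B$, so that a section over $B$ can be arranged compatibly with a chosen section over $A$ and then realized geometrically.

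The real work is entirely in \autoref{relsec}: proving the relative section criterion, that is, a relative, fiberwise form of the Freudenthal--Blakers--Massey stabilization showing that the single class $\rrkw{f}{B}{A}$ is a complete obstruction for a pair of fibrations over a pair of manifolds. Within the present section the only points that need care are that the relative section produced this way really does unwind, through \autoref{maptolift}, to a relative homotopy of $f$ to a fixed point free map, and that the element built from the two canonical sections of the fiberwise suspensions coincides with $\rrkw{f}{B}{A}$ as defined just before the statement, which is a matter of unwinding the definitions of $S_B^0$, the fiberwise suspension, and $\sect_1,\sect_2$.
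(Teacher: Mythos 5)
Your proposal follows the same overall strategy as the paper's proof: combine \autoref{maptolift} and \autoref{lifttosect} to translate the problem into finding a relative section of $(\Gamma_{f*}(N(i_B)),\Gamma_{f|_A*}(N(i_A)))\to(B,A)$, feed a connectivity estimate into the relative section criterion of \autoref{relsec}, and observe that the relevant obstruction class is precisely $\rrkw{f}{B}{A}$. The paper's proof does exactly this, citing \autoref{connectivity} for the connectivity estimate and \autoref{chitochi} for the relative section criterion.

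However, there is a genuine error in your derivation of the connectivity estimate. You argue that the homotopy fiber of $i_B\colon B\times B\setminus\triangle\hookrightarrow B\times B$ is contractible over a point off the diagonal and homotopy equivalent to $S^{\dim B-1}$ over a point on the diagonal, and deduce from this that $\Gamma_{f*}N(i_B)\to B$ has contractible fibers off $\mathrm{Fix}(f)$ and spherical fibers over $\mathrm{Fix}(f)$. This dichotomy cannot be right: once one passes to the fibrant replacement $N(i_B)\to B\times B$, the fibers over any two points of the path-connected base $B\times B$ are homotopy equivalent, and in particular the fiber over a point off the diagonal is \emph{not} contractible (otherwise $i_B$ would be a weak equivalence). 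What you are implicitly computing off the diagonal is the literal point-preimage of $i_B$, not the homotopy fiber; these differ. The actual content of \autoref{connectivity}, quoted from \cite{KW}, is that the \emph{map} $\Gamma_{f*}(N(i))\to M$ is $(n-1)$-connected, a single global statement whose proof in \cite{KW} uses a Blakers--Massey-type argument about the codimension-$n$ inclusion of $M\times M\setminus\triangle$ into $M\times M$, not a pointwise fiber identification. You do reach the correct numerical bound (fibers $(\dim B-2)$-connected is the same as the map being $(\dim B-1)$-connected), so the rest of your argument goes through, but as written the justification for that bound would not survive scrutiny; the fix is simply to cite \autoref{connectivity} as the paper does.

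A smaller point: you gesture at the role of the codimension hypothesis as ensuring ``$A$ does not locally separate $B$.'' The mechanism in \autoref{relsec} is more specific --- \autoref{bypass} requires $E_B\to B$ to be a $(\dim A+1)$-equivalence, which given the $(\dim B-1)$-connectivity from \autoref{connectivity} forces $\dim B\geq\dim A+2$; this is how the codimension $\geq 2$ hypothesis enters.
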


The proof of this proposition, except for one key step proved in the next section, 
follows the preliminary lemma below.

\begin{lem}\cite[6.1, 6.2]{KW}\label{connectivity}
Let $M$ be a manifold of dimension $n$ and $i\colon M\times M\setminus 
\triangle\rightarrow M \times M$ be the inclusion.  Then $\Gamma_{f*}(N(i))\rightarrow M$ 
is $(n-1)$-connected.
\end{lem}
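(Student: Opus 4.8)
The plan is to reduce the statement to two standard facts: deleting a codimension-$n$ submanifold gives an $(n-1)$-connected inclusion of the complement, and a map is $(n-1)$-connected exactly when its homotopy fibres are $(n-2)$-connected — a property that survives replacing a map by a fibration and pulling back.

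First I would note that, $M$ being an $n$-manifold, the diagonal $\triangle$ is a submanifold of $M\times M$ of codimension $n$, with normal bundle $TM$. Choosing a tubular neighbourhood of $\triangle$ exhibits $M\times M$ as the homotopy pushout of $\triangle \leftarrow S\nu \rightarrow M\times M\setminus\triangle$, where $S\nu\rightarrow\triangle$ is the $(n-1)$-sphere bundle of the normal bundle; since its fibre $S^{n-1}$ is $(n-2)$-connected, $S\nu\rightarrow\triangle$ is $(n-1)$-connected. As the inclusion \[i\colon M\times M\setminus\triangle\hookrightarrow M\times M\] is the pushout of $S\nu\rightarrow\triangle$ along $S\nu\rightarrow M\times M\setminus\triangle$, it is at least as connected, hence $(n-1)$-connected. (Alternatively, one argues directly by transversality: any map into $M\times M$ of a complex of dimension $\le n-1$, and any homotopy of such a map of a complex of dimension $\le n-2$, can be pushed off $\triangle$.)

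Next, since $i$ is $(n-1)$-connected, every homotopy fibre of $i$ is $(n-2)$-connected. The fibration $N(i)\rightarrow M\times M$ is a model for $i$ as a fibration, so its fibre over a point $p$ is the homotopy fibre of $i$ over $p$ and hence is $(n-2)$-connected for every $p$. Pulling back along $\Gamma_f\colon M\rightarrow M\times M$ yields the fibration $\Gamma_{f*}(N(i))\rightarrow M$ whose fibre over $m$ is the homotopy fibre of $i$ over $(m,f(m))$; all of these fibres are $(n-2)$-connected. The long exact homotopy sequence of this fibration then shows that $\Gamma_{f*}(N(i))\rightarrow M$ is $(n-1)$-connected, which is the assertion.

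The one subtlety — and the reason to work with the uniform connectivity bound rather than try to identify the fibre — is that the homotopy fibre of $i$ over $(m,f(m))$ genuinely depends on whether $m$ is a fixed point: over a fixed point it is equivalent to $S^{n-1}$, but over a non-fixed point it picks up the fundamental group of $M\times M$ and is not a sphere. Only its connectivity is needed, and that depends solely on the codimension of $\triangle$, so the global homotopy type of $M\times M\setminus\triangle$ never enters. I expect the codimension/transversality input of the second paragraph to be the only real step; everything after is formal bookkeeping with fibrations.
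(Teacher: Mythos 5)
Your proof is correct and follows exactly the two-step route of the cited Klein--Williams lemmas: the codimension-$n$ general-position (tubular neighbourhood / transversality) argument shows the complement inclusion $i$ is $(n-1)$-connected, and the observation that a map's connectivity is governed by the connectivity of its homotopy fibres — which are preserved when one replaces by a fibration and pulls back — transfers this to $\Gamma_{f*}(N(i))\to M$. The paper itself only cites \cite[6.1, 6.2]{KW} here rather than reproving the lemma, and your argument is the expected one.
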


\begin{proof}[Proof of \autoref{relconverse}]
\autoref{maptolift} and \autoref{lifttosect} convert the question
of finding a lift of a relative map $f\colon (B,A)\rightarrow (B,A)$ to the 
question of finding a section of the fibration
\[(\Gamma_{f*}(N(i_B)),(\Gamma_{f|_A*}(N(i_A)))
\rightarrow (B,A).\]

If the dimension of $A$ is $n_A$ and the dimension of $B$ is $n_B$ then 
\autoref{connectivity} implies that $\Gamma_{f*}(N(i_B))
\rightarrow B$ is $(n_B-1)$-connected and $\Gamma_{f|_A*}(N(i_A))
\rightarrow A$ is $(n_A-1)$-connected.  If $n_A$ and $n_B$ are at least 
3 and $n_B-n_A$ is at least 2, 
\autoref{chitochi} implies that \[(\Gamma_{f*}(N(i_B)),\Gamma_{f|_A*}(N(i_A)))
\rightarrow (B,A)\] has a relative section 
if and only if $\rrkw{f}{B}{A}=0$.
\end{proof}

The hypotheses in this proposition are not the standard hypotheses used
in the converse to the relative Lefschetz fixed point theorem.  The standard
condition is that \[\pi_1(B\setminus A)\rightarrow \pi_1(B)\]
is surjective.  The codimension condition implies this condition.
We use a codimension condition since it is  compatible with 
the approach of the proof. I don't know if the surjectivity condition can be used in this 
approach.  

To complete the proof of Theorem B we need to compare $\rrkw{f}{B}{A}$ and 
the relative geometric Reidemeister trace.

\begin{prop}\label{htpykwcompare} Let $A\subset B$ be closed smooth 
manifolds and $f\colon (B,A)\rightarrow (B,A)$ be
a relative map.  Then  
\[\rrkw{f}{B}{A}=0 \,\,\mathrm{if \, and \,only\, if}\,\,\rrhtpy{f}{B}{A}=0.\]
\end{prop}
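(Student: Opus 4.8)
The plan is to reduce the statement to the non-relative comparison of Klein and Williams, carried out separately over $A$ and over $B$. By construction $\rrkw{f}{B}{A}$ is a pair: its $A$-coordinate is the fiberwise-suspension class of $f|_A\colon A\to A$ in $[S_A^0, S_A\Gamma_{f*}(N(i_A))]_A$, and its $B$-coordinate is the analogous class of $f\colon B\to B$ in $[S_B^0, S_B\Gamma_{f*}(N(i_B))]_B$. Parallel to this, and to the splitting of the geometric Reidemeister trace in \autoref{splitshadgeo}, the invariant $\rrhtpy{f}{B}{A}$ decomposes as the sum of an $A$-part supported near the fixed points of $f|_A$ and a $B$-part supported near the fixed points of $f$ in $B\setminus A$. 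Since a direct sum of abelian groups vanishes exactly when each summand vanishes, it suffices to show, coordinate by coordinate, that the relevant piece of $\rrkw{f}{B}{A}$ is zero if and only if the corresponding piece of $\rrhtpy{f}{B}{A}$ is zero. None of the dimension or codimension hypotheses of Theorem B are needed here, since we are comparing the two invariants rather than asserting that their vanishing produces a fixed-point-free map; correspondingly the comparison will be an identity of stable homotopy classes, not one valid only through a range of dimensions.

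First I would handle the $A$-coordinate. Here the assertion is exactly the non-relative comparison: for a self-map $g$ of a closed smooth manifold $M$, with $i_M\colon M\times M\setminus\triangle\to M\times M$ the inclusion, the fiberwise-suspension class in $[S_M^0, S_M\Gamma_{g*}(N(i_M))]_M$ is zero if and only if the geometric Reidemeister trace of $g$ in $\pi_0^s(\Lambda^g M)$ is zero --- in fact the two correspond under a natural identification of these groups. This is the core of the argument of \cite{KW}, and in the duality-and-trace language of the present paper it is an instance of the identification between the trace of the relevant $2$-cell and the obstruction class built from $S_M\Gamma_{g*}(N(i_M))\to M$, cf.\ \cite{thesis}. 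Applying it to $g=f|_A$ disposes of the $A$-coordinate. For the $B$-coordinate I would apply the same statement to $g=f\colon B\to B$, obtaining that the $B$-coordinate of $\rrkw{f}{B}{A}$ vanishes if and only if the full geometric Reidemeister trace of $f$ on $B$ vanishes, and then reconcile this with vanishing of the $B$-part of $\rrhtpy{f}{B}{A}$, which records only the fixed points in $B\setminus A$. That reconciliation uses tautness of $f$ and the class-by-class index additivity recalled after \autoref{splitidx}: the contribution of each fixed point class of $f$ splits as the part lying in $A$ plus the part lying in $B\setminus A$, the former agreeing with the corresponding class of $f|_A$. Granting the $A$-coordinate comparison, vanishing of both parts of $\rrhtpy{f}{B}{A}$ is then equivalent to vanishing of the full Reidemeister trace of $f$, and the two coordinates together yield the proposition.

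The main obstacle is the non-relative comparison itself: matching the Klein--Williams fiberwise-suspension class of a self-map of a closed manifold with its geometric Reidemeister trace, at the level of stable homotopy classes of maps. In the framework of this paper this is the statement that the bicategorical trace computing the Reidemeister trace of $f$ is carried, under the passage to fiberwise spaces over $B$ used in \autoref{relconverse}, to the obstruction class of the fiberwise suspension $S_B\Gamma_{f*}(N(i_B))\to B$; establishing this is where the real work lies. Once it is in hand the relative case is bookkeeping --- checking that the two direct-sum decompositions are compatible and running the index-additivity argument in the $B$-coordinate, where the taut and cofibration hypotheses enter exactly as in the proofs of \autoref{splitidx} and \autoref{idxzero}.
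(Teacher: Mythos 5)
Your overall architecture matches the paper's proof: split $\rrkw{f}{B}{A}$ into its $A$- and $B$-coordinates, appeal to the non-relative Klein--Williams comparison for each, and use additivity/tautness to reconcile the $B$-coordinate (which sees all fixed points of $f$) with the $B$-part of the geometric Reidemeister trace (which sees only those in $B\setminus A$). That reconciliation in the second coordinate is carried out in the paper via additivity of the transfer, landing on $\iota(\tau_{U_B}(f))+\iota(\tau_{U_A}(f|_A))$; your index-by-class argument is equivalent bookkeeping.

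There is one genuine gap, concentrated in a single assertion: that the dimension hypotheses ``are not needed here'' because ``the comparison will be an identity of stable homotopy classes.'' This misreads the nature of $\rrkw{f}{B}{A}$. That invariant lives in the \emph{unstable} fiberwise groups $[S^0_B, S_B\Gamma_{f*}(N(i_B))]_B\oplus[S^0_A, S_A\Gamma_{f|_A*}(N(i_A))]_A$, while the geometric Reidemeister trace lives in $\pi_0^s$ of free loop-type spaces. The bridge between the two is exactly the composite of the fiberwise Freudenthal suspension theorem with Costenoble--Waner duality and Lemma~\ref{fibidentify}, and Freudenthal forces a dimension restriction: $S_A\Gamma_{f|_A*}(N(i_A))\rightarrow A$ is only $(\dim A)$-connected by Lemma~\ref{connectivity}, so the stabilization map $[S^0_A,\,-\,]_A\rightarrow\{S^0_A,\,-\,\}_A$ is an isomorphism only when $\dim A\geq 3$ (similarly for $B$). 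Without that step you can deduce $\rrgeo{f}{B}{A}=0$ from $\rrkw{f}{B}{A}=0$ (stabilizing kills a vanishing class), but the converse --- the direction you actually need for Theorem~B --- requires knowing the stabilization map is injective. You are correct that the codimension-$\geq 2$ hypothesis does not enter this comparison (it is used only in Proposition~\ref{relconverse}), but the dimension-$\geq 3$ hypothesis is genuinely part of this proposition's proof, and an account of the Freudenthal/duality step is the ``real work'' that your proposal acknowledges but then incorrectly declares unnecessary.
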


We recall a lemma from \cite{KW2}.

\begin{lem} \cite[7.1]{KW2}\cite[8.3.1]{thesis}\label{fibidentify}
Let $M$ be a closed smooth manifold with normal bundle 
$\nu_M$.   Then there is a weak equivalence
\[S^{\nu_M}\odot \Gamma_{f*}S_{M\times M}\fibs (i_M)
\rightarrow S^n\wedge \Lambda^fM.\] 
\end{lem}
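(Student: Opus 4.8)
The plan is to unwind the fiberwise suspension and the $\odot$-composition in the parametrized framework of \cite{MS}, and to identify the result with the pushforward to a point of a \emph{trivial} sphere bundle over the twisted loop space $\Lambda^f M$. Throughout, $n$ is as in \autoref{topduals} (so $M\subset\mathbb{R}^n$ and $\nu_M$ has rank $n-\dim M$), $\triangle\colon M\to M\times M$ is the diagonal, and $S^{TM}$ denotes the fiberwise one-point compactification of $TM$, which is the normal bundle of the diagonal submanifold. The first step is to identify the fiberwise cofiber: the fiber of $\fibs(i_M)\to M\times M$ is $\simeq S^{\dim M-1}$ over a point of the diagonal (cf.\ \autoref{connectivity}) and contractible off the diagonal, and since $M\times M$ is the homotopy pushout of a tubular neighborhood of $\triangle$ and the complement of $\triangle$, excision identifies the homotopy cofiber of $\fibs(i_M)\to M\times M$, formed in ex-spaces over $M\times M$, with $\triangle_!S^{TM}$, the Thom ex-space of $\nu_\triangle\cong TM$ supported on $\triangle$. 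Collapsing one of the two poles of the unreduced fiberwise suspension $S_{M\times M}\fibs(i_M)$ gives a map onto this cofiber that is a fiberwise equivalence, and since the collapsed pole is carried to the basepoint section of $S^{\nu_M}$ by the fiberwise smash, the definition of $\odot$ in \cite{MS} yields
\[S^{\nu_M}\odot\Gamma_{f*}S_{M\times M}\fibs(i_M)\;\simeq\;r_!\bigl(S^{\nu_M}\wedge_M\Gamma_f^*(\triangle_!S^{TM})\bigr),\]
where $r\colon M\to *$; so it suffices to analyze the right-hand side.

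Next I would apply base change and the projection formula. The homotopy pullback of $\Gamma_f\colon M\to M\times M$ and $\triangle\colon M\to M\times M$ is naturally equivalent to $\Lambda^f M$, and I write $p,q\colon\Lambda^f M\to M$ for the projections to the $\Gamma_f$- and $\triangle$-factors and $\rho=r\circ p\colon\Lambda^f M\to *$. The Beck--Chevalley isomorphism of \cite{MS} gives $\Gamma_f^*\triangle_!\simeq p_!q^*$, and combined with the projection formula $S^{\nu_M}\wedge_M p_!(-)\simeq p_!\bigl(p^*S^{\nu_M}\wedge_{\Lambda^f M}(-)\bigr)$ the right-hand side above becomes $\rho_!\bigl(p^*S^{\nu_M}\wedge_{\Lambda^f M}q^*S^{TM}\bigr)$.

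Finally I would prove that this sphere bundle is trivial. The two projections $p$ and $q$ are homotopic as maps $\Lambda^f M\to M$ — the path in $M\times M$ recorded by a point of the homotopy pullback projects to such a homotopy — so $q^*TM\cong p^*TM$ over $\Lambda^f M$. Hence $p^*\nu_M\oplus q^*TM\cong p^*(\nu_M\oplus TM)\cong p^*\underline{\mathbb{R}^n}=\underline{\mathbb{R}^n}$, since $\nu_M\oplus TM$ is the trivial rank-$n$ bundle coming from $M\subset\mathbb{R}^n$. Therefore $p^*S^{\nu_M}\wedge_{\Lambda^f M}q^*S^{TM}$ is the trivial bundle $S^n_{\Lambda^f M}$, and $\rho_!S^n_{\Lambda^f M}=S^n\wedge(\Lambda^f M)_+$, i.e.\ $S^n\wedge\Lambda^f M$. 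Composing the collapse map of the first step with the equivalences of the last two steps produces the asserted weak equivalence.

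The main obstacle is the first step: matching the bookkeeping of \cite{MS} for the unreduced fiberwise suspension with its two sections, pinning down which pole serves as the basepoint in the $\odot$-composition, and verifying that the collapse map $S_{M\times M}\fibs(i_M)\to\mathrm{hocofib}(\fibs(i_M)\to M\times M)$ becomes an equivalence after fiberwise smashing with the sphere bundle $S^{\nu_M}$; this is packaged with checking that Beck--Chevalley and the projection formula hold in the relevant parametrized homotopy category, for which I would cite \cite{MS}. The geometric input — the tubular neighborhood of the diagonal, the homotopy $p\simeq q$, and the triviality of $\nu_M\oplus TM$ — is routine.
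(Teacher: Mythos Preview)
The paper does not supply its own proof of this lemma: it is quoted verbatim from \cite[7.1]{KW2} and \cite[8.3.1]{thesis} and used as input to the proof of \autoref{htpykwcompare}, so there is nothing in the paper to compare your argument against.

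That said, your outline is essentially the argument that appears in those references. The three steps --- (i) identifying $S_{M\times M}\fibs(i_M)$, viewed as an ex-space over $M\times M$ with one pole as basepoint section, with $\triangle_!S^{TM}$ via a tubular neighborhood of the diagonal; (ii) the Beck--Chevalley/base-change and projection-formula manipulation using that $\Lambda^fM$ is the homotopy pullback of $\Gamma_f$ and $\triangle$; and (iii) the trivialization $p^*\nu_M\oplus q^*TM\cong\underline{\mathbb{R}^n}$ coming from the homotopy $p\simeq q$ and $TM\oplus\nu_M\cong\underline{\mathbb{R}^n}$ --- are exactly the ingredients in \cite[8.3.1]{thesis}. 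One small sharpening of your step (i): you do not actually need to pass through a collapse map and then argue it becomes an equivalence after smashing. Off the diagonal the fiber of $\fibs(i_M)$ is contractible, so the fiber of $S_{M\times M}\fibs(i_M)$ is the unreduced suspension of a contractible space and hence contractible rel the basepoint pole; on the diagonal the fiber is $S(S^{\dim M-1})\simeq S^{T_mM}$. Thus $S_{M\times M}\fibs(i_M)$ is already fiberwise weakly equivalent to $\triangle_!S^{TM}$ as an ex-space, and you can feed that directly into the $\odot$-composition. With that adjustment the bookkeeping you flag as the main obstacle disappears.
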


\begin{proof}[Proof of \autoref{htpykwcompare}]  
Suppose $X$ and $Y$ are ex-spaces over $B$, the projection maps are fibrations and 
the sections are fiberwise cofibration.  Let $\{X, Y\}_B$ denote
the fiberwise stable homotopy classes of maps from $X$ to $Y$.

If $A$ and $B$ are both closed smooth manifolds of dimension at least 
three, then 
the dimension assumption, \autoref{connectivity}, and the fiberwise 
Freudenthal suspension theorem in \cite[4.2]{james96} imply
that the maps \[[S^0_A,S_A\Gamma_{f|_A*}(N(i_A))]_A\rightarrow \{S^0_A,S_A\Gamma_{f*}(N(i_A))\}_A\]
\[[S^0_B,S_B\Gamma_{f*}(N(i_B))]_B\rightarrow \{S^0_B,S_B\Gamma_{f*}(N(i_B))\}_B\]
are isomorphisms.   
Costenoble-Waner duality 
\cite[18.5.5, 18.6.3]{MS} and \autoref{fibidentify} imply there are isomorphisms
\begin{eqnarray*}
 \{S^0_A,S_A\Gamma_{f|_A*}(N(i_A))\}_A&\cong& \{S^n, S^{\nu_A}\odot S_A\Gamma_{f*}(N(i_A))\}\\
&\cong& \{S^n,S^n\wedge \Lambda^{f|_A}A_+\}.
\end{eqnarray*}  
and 
\begin{eqnarray*}
 \{S^0_B,S_B\Gamma_{f*}(N(i_B))\}_B&\cong& \{S^n, S^{\nu_B}\odot S_B\Gamma_{f*}(N(i_B))\}\\
&\cong& \{S^n,S^n\wedge \Lambda^fB_+\}.
\end{eqnarray*}  

Let $U_A$ be a neighborhood of the fixed points of $f|_A$ such 
that there is a map \[\iota_A\colon U_A\rightarrow \Lambda^{f|_A}A\] that takes fixed 
points to the constant path at that point.
In \cite[6.3.2]{thesis} it is shown that the image of $\rrkw{f}{B}{A}$  in $\pi_0^s( \Lambda^{f|_A}A_+)$
is $\iota_A(\tau(f|_{U_A}))$.  

Let $U_B$ be a neighborhood of the fixed points of $f$ in $B\setminus A$ 
such that there is a map
\[\iota_B\colon U_B\rightarrow \Lambda^fB\] which 
takes fixed points to constant paths. 

The image of $\rrkw{f}{B}{A}$ in
$\pi_0^s( \Lambda^{f}B_+)$ is the composite of the transfer of $f$ with 
respect to the diagonal map \[B_+\rightarrow B_+\wedge \overline{(U_B\amalg U_A)}/\partial
\overline{(U_B\amalg U_A)}\] with the map 
\[\iota\coloneqq \iota_A\amalg\iota_B\colon U_A\amalg U_B\rightarrow \Lambda^fB.\]  Since the transfer 
is additive, \cite{dold76}, the image of $\rrkw{f}{B}{A}$ in $\pi_0^s( \Lambda^{f}B_+)$ is 
\[\iota(\tau_{U_B\amalg U_A}(f))=\iota(\tau_{U_B}(f)+\tau_{U_A}(f|_A))
=\iota(\tau_{U_B}(f))+\iota(\tau_{U_A}(f|_A)).\]

Then $\rrkw{f}{B}{A}$ is zero if and only if $\iota_A(\tau(f|_{U_A}))$ and 
$\iota(\tau_{U_B}(f))+\iota(\tau_{U_A}(f|_A))$ are both zero.  Using 
\autoref{splitshadgeo} these elements are zero if and only if $\rrhtpy{f}{B}{A}$
is zero. 
\end{proof}

\begin{proof}[Proof of Theorem B]
\autoref{relconverse} implies that $f$ is relatively homotopic to a 
fixed point free map if and only if $\rrkw{f}{B}{A}=0$.  
\autoref{htpykwcompare} implies $\rrkw{f}{B}{A}=0$ if and only if 
$\rrgeo{f}{B}{A}=0$.
\autoref{alggeocompare} implies $\rralg{f}{B}{A}=\rrgeo{f}{B}{A}$.
\end{proof}

\begin{rmk} \autoref{relconverse} and the proof of \autoref{htpykwcompare} show if
$\dim(A)\geq 3$ and $\dim(B)\geq \dim(A)+2$  $\rrkw{f}{B}{A}$ is zero if and only
if the two {\bf nonrelative} invariants for $A$ and $B$ are zero.  

Using these two invariants to define a relative invariant would be analogous to 
defining the relative invariants in the previous sections as the pair of classical 
invariants for the spaces $A$ and $B$.  This alternate definition would satisfy 
the requirements of the introduction for a fixed point invariant.  However, there are several reasons why  
the corresponding definition in the equivariant case is not acceptable.  The definitions
in the previous sections were chosen because they are consistent with the choices 
in \cite{equiv}.
\end{rmk}

\section{Relative sections}\label{relsec}
In this section we generalize the result from \cite{KW} on sections of fibrations 
to relative fibrations.

If the dimension of $B$ is $2n$ and the fibration $p\colon E\rightarrow B$ is 
$n+1$-connected, it is shown in \cite{KW} 
that the two sections 
\[\sect_1, \sect_2\colon B\rightarrow S_BE\]
are homotopic over $B$ if and only there is a section of $p$.  
We can generalize this result to relative sections.  

If $A\subset B$ let $E_A$ be a subspace of $E_B$ such that the image of $p$ restricted to $E_A$ is 
contained in $A$.
Let $S_{A,B}E_A$ be \[B\times \{0\}\cup E_A\times I\cup A\times \{1\}.\] Let 
$ [(S^0_B,A\amalg B), (S_BE_B,S_{A,B}E_A)]_B$ be the relative fiberwise homotopy 
classes of maps from $(S^0_B,A\amalg B)$ to $(S_BE_B,S_{A,B}E_A)$.

\begin{defn} Let $A\subset B$, $p\colon E_B\rightarrow B$ be a fibration, and 
$E_A\subset p^{-1}(A)$ such that $E_A\rightarrow A$ is a fibration.
The \emph{relative homotopy Euler class} 
\[\chi\in [(S^0_B,A\amalg B), (S_BE_B,S_{A,B}E_A)]_B\] is 
$\sect_1\amalg \sect_2\colon S_B^0\rightarrow S_BE_B$.
\end{defn}

\begin{prop}\label{secttosect} 
If $(E_B,E_A)\rightarrow (B,A)$ admits a relative section $\sectt$ then $\chi$ is 
trivial.

Conversely, assume $p\colon E_A\rightarrow A$ is $(m+1)$-connected,
$A$ is a $2m$-dimensional CW-complex, $p\colon E_B\rightarrow B$ is 
$(n+1)$-connected and $(B,A)$ is a relative $2n$-dimensional CW-complex.
If $\chi$ is trivial then $p$ has a relative section.
\end{prop}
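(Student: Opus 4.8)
The forward direction is the easy half: if $\sectt\colon (B,A)\to (E_B,E_A)$ is a relative section, then composing with the two natural inclusions $E_B\hookrightarrow S_BE_B$ (over $0$ and over $1$) gives two relative fiberwise homotopies, one from $\sect_1$ to $\sectt$ regarded as landing in the middle cylinder, and symmetrically for $\sect_2$. Concretely, the straight-line path in the mapping-cylinder coordinate $[0,1]$ provides a homotopy $\sect_1\simeq \sect_2$ over $B$ carrying $A$ into $S_{A,B}E_A$ at every stage, since $\sectt(A)\subset E_A$. Hence $\chi$ is trivial. I would write this out in one short paragraph, being careful that the homotopy is constructed simultaneously on $B$ and on $A$ so that it is a genuine \emph{relative} fiberwise homotopy.

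For the converse, the plan is to run the Klein–Williams obstruction argument of \cite{KW} twice, once absolutely for the pair $E_A\to A$ and once relatively for $(E_B,E_A)\to(B,A)$, and to glue. First, from the triviality of $\chi$ restrict to $A$: the restriction of $\chi$ to $A$ is precisely the absolute homotopy Euler class of $E_A\to A$ in $[S^0_A,S_AE_A]_A$, and since $p\colon E_A\to A$ is $(m+1)$-connected and $A$ is $2m$-dimensional, the theorem of \cite{KW} applies verbatim to produce a section $\sectt_A\colon A\to E_A$. Next I would extend $\sectt_A$ over the relative CW-complex $(B,A)$ cell by cell. The obstruction to extending over an $i$-cell not in $A$ lies in $\pi_{i-1}(\text{fiber of }p\colon E_B\to B)$, which vanishes for $i\le n+1$ because $p$ is $(n+1)$-connected; so the extension proceeds without obstruction through the $(n+1)$-skeleton (relative to $A$). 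For cells of dimension $i$ with $n+1<i\le 2n$ the obstructions no longer vanish automatically, and one must feed in the vanishing of $\chi$.

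The real content — and the step I expect to be the main obstacle — is showing that the triviality of $\chi$ in $[(S^0_B,A\amalg B),(S_BE_B,S_{A,B}E_A)]_B$ is exactly what kills the remaining obstructions to the relative extension. The idea, following \cite{KW}, is that the double mapping cylinder $S_BE_B$ is the fiberwise unreduced suspension, so by fiberwise Freudenthal (valid in the connectivity/dimension range $2n\le 2(n+1)-1$) the homotopy class of the pair of sections $\sect_1\amalg\sect_2$ stabilizes and is detected by a single primary obstruction; a relative section exists precisely when that obstruction, i.e. the relative difference class $d(\sect_1,\sect_2)$, vanishes, and that difference class is the image of $\chi$. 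One must check that the stabilization and the identification of $\chi$ with the difference obstruction are compatible with the subspace $A$ — i.e. that the whole argument is carried out in the category of relative CW-pairs, using the relative version of obstruction theory and the relative fiberwise Freudenthal theorem of \cite[4.2]{james96}. The absolute section $\sectt_A$ on $A$ built in the previous paragraph serves as the boundary data, and the vanishing of (the relative part of) $\chi$ gives the vanishing of the obstruction to extending it over all of $B$. I would organize the write-up so that the absolute case is invoked as a black box from \cite{KW} and only the relative extension and the identification of $\chi$ with the relative obstruction class are spelled out, since those are where the pair structure genuinely enters.
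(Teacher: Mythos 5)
Your forward direction is essentially the paper's: the explicit homotopy $H(b,t)=(\sectt(b),t)$ through the mapping-cylinder coordinate is exactly what the paper writes down, and you are right that the only thing to verify is that it carries $A$ into $S_{A,B}E_A$, which follows from $\sectt(A)\subset E_A$.

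For the converse you take a genuinely different route, and it has a gap precisely at the step you flag as "the main obstacle." You propose classical obstruction theory: build $\sectt_A$ on $A$ via \cite{KW}, extend cell-by-cell over $(B,A)$ (unobstructed through dimension $n+1$), and then argue that for higher cells the vanishing of $\chi$ kills the obstructions because $\chi$ "is the relative difference class." That identification of $\chi\in [(S^0_B,A\amalg B),(S_BE_B,S_{A,B}E_A)]_B$ with a classical cohomological obstruction class is not a standard fact; it is essentially the content of the theorem and would need a careful proof of its own, including the compatibility with $A$ that you note but do not carry out. The paper sidesteps this entirely by using \autoref{htpypullback}: a null-homotopy $K$ of $\chi$ is literally a map into the homotopy pullback $P$, producing $h_A\colon A\to P_A$ and $h_B\colon B\to P_B$ with $ph=\id$ directly. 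Since $q\colon E\to P$ is a $2j$-equivalence and the relevant skeleta fit below that range, Whitehead's theorem gives $k_A$, and then the homotopy extension and lifting property of $(B,A)$ extends $k_A$ and $J_A$ to $k_B$ and $J_B$, after which a fibration lift produces the section. No identification of $\chi$ with an obstruction class is ever needed, and the relative structure is handled cleanly by the HELP-style diagram rather than by a relative obstruction theory that would itself need to be set up. Your approach could in principle be completed, but it trades the one concrete lemma the paper needs (\autoref{htpypullback}) for an entire parallel theory whose key comparison step is left unproved.
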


Before we prove this proposition we recall a preliminary lemma.

\begin{lem}\label{htpypullback}\cite[3.1]{KW} Let $p\colon E\rightarrow B$ be a $(j+1)$-connected
fibration and $P$ be the homotopy pullback \[\xymatrix{P\ar[r]\ar[d]&B\ar[d]\\
B\ar[r]&S_BE.}\]  A fiberwise homotopy from 
$\sect_1$ to $\sect_2$ defines a $2j$-equivalence 
$q\colon E\rightarrow P$.
\end{lem}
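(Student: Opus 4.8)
The plan is to take $q$ to be the canonical comparison map from $E$ into the homotopy pullback, to use the given fiberwise homotopy $h$ from $\sect_1$ to $\sect_2$ to identify the fibers of $P\to B$ with fiberwise loop spaces, and then to deduce the connectivity estimate from the Freudenthal suspension theorem applied fiberwise over $B$.

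First I would record the fiberwise picture. Since $p\colon E\to B$ is a (Hurewicz) fibration, $S_BE\to B$ is again a fibration, and its fiber over $b\in B$ is the unreduced suspension $\Sigma F_b$ of the fiber $F_b=p^{-1}(b)$; the sections $\sect_1$ and $\sect_2$ restrict on this fiber to the two suspension points $N_b$ and $S_b$. Because $p$ is $(j+1)$-connected, $F_b$ is $j$-connected; I may assume $j\ge 0$ (for $j<0$ the assertion is vacuous), so each $F_b$ is nonempty and $\Sigma F_b$ is simply connected. Modelling $P$ as the space of triples $(b,b',\gamma)$ with $\gamma$ a path in $S_BE$ from $\sect_1(b)$ to $\sect_2(b')$ and projecting to $b$, one checks that $P\to B$ is a fibration whose fiber over $b$ is the space of paths in $\Sigma F_b$ from $N_b$ to $S_b$.

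Next I would write down $q$. For $e\in E$ with $p(e)=b$, the path $t\mapsto[e,t]$ runs in $S_BE$ from $[e,0]=\sect_1(b)$ to $[e,1]=\sect_2(b)$, so $e\mapsto(b,b,\,t\mapsto[e,t])$ is a well-defined continuous map $q\colon E\to P$ over $B$, restricting on fibers to the ``meridian'' map $F_b\to(\text{paths from }N_b\text{ to }S_b\text{ in }\Sigma F_b)$, $x\mapsto(t\mapsto[x,t])$. Now the homotopy $h$ enters: reversing $h$ gives, for each $b$, a path $\delta_b$ from $S_b$ to $N_b$ in $\Sigma F_b$ depending continuously on $b$, and concatenation with $\delta_b$ defines a fiberwise homotopy equivalence $\omega\colon P\xrightarrow{\sim}\Omega_B(S_BE)$ onto the fiberwise based loop space of $S_BE\to B$ at $\sect_1$. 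On the fiber over $b$ the composite $\omega\circ q$ is a map $F_b\to\Omega\Sigma F_b$, and since $\Sigma F_b$ is simply connected it is homotopic, independently of the choice of $\delta_b$, to the Freudenthal suspension map.

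Finally I would invoke Freudenthal: as $F_b$ is $j$-connected, $F_b\to\Omega\Sigma F_b$ is a $2j$-equivalence (in fact a $(2j+1)$-equivalence), so $\omega\circ q$ is a fiberwise $2j$-equivalence over $B$; comparing the long exact sequences of the two fibrations via the five lemma then shows $\omega\circ q$, and hence $q$ (since $\omega$ is an equivalence), is a $2j$-equivalence. The step demanding the most care is the fiberwise bookkeeping over $B$: checking that $P\to B$ really is a fibration with the asserted fibers, that $q$ restricts on fibers to the meridian map, and that concatenation with the continuously-varying family $\delta_b$ produced by $h$ is an honest fiberwise homotopy equivalence — this last point (and only this point) is where nonemptiness of the fibers, i.e. $j\ge 0$, is used. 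The Freudenthal input and the passage from a fiberwise equivalence to an equivalence are entirely standard.
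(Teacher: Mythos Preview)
The paper does not give its own proof of this lemma; it simply cites it as \cite[3.1]{KW} and uses it as a black box in the proof of \autoref{secttosect}. So there is nothing in the paper to compare your argument against.

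Your argument is essentially the standard one and is correct in outline: the map $q$ is the meridian map, on fibers it becomes (after choosing a return path) the Freudenthal map $F_b\to\Omega\Sigma F_b$, and since $F_b$ is $j$-connected this is a $(2j{+}1)$-equivalence, hence a $2j$-equivalence; the five lemma on the two fibration sequences over $B$ finishes the job. Two small points are worth tightening. First, when you identify the fiber of $P\to B$ over $b$ with paths in $\Sigma F_b$ from $N_b$ to $S_b$, you are implicitly replacing the absolute homotopy pullback $P$ by the \emph{fiberwise} homotopy pullback over $B$; these agree up to weak equivalence because $\sect_1,\sect_2$ are sections of the fibration $S_BE\to B$, but you should say so. Second, note that your $q$ is defined without reference to the given fiberwise homotopy $h$; you use $h$ only to produce the fiberwise comparison $\omega$ to $\Omega_B(S_BE)$. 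In fact the fiberwise Freudenthal identification can be checked one fiber at a time (each $\Sigma F_b$ is path-connected once $j\ge 0$), so the hypothesis is not strictly needed to show that your canonical $q$ is a $2j$-equivalence. This is harmless for the application in \autoref{secttosect}, which only needs some $2j$-equivalence $E\to P$.
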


\begin{proof}[Proof of \autoref{secttosect}]
If there is a relative section $\sectt$ then the homotopy \[H\colon (S_B^0,A\amalg B)\times I
\rightarrow (S_BE_B, S_{A,B}E_A)\] defined by 
$H(b,t)=(\sectt(b),t)$ shows $\chi$ is trivial.

If $\chi$ is trivial there is a relative fiberwise homotopy \[K\colon (S_B^0,A\amalg B) \times I 
\rightarrow (S_BE_B, S_{A,B}E_A)\] from $\sect_2$ to $\sect_1$.   
The restriction of 
$K$ to $S^0_A$ defines a homotopy between $\sect_1|_A\colon A\rightarrow 
S_AE_A$ and $\sect_2|_A$. 
\autoref{htpypullback}, Whitehead's theorem, and the homotopy $K|_{S^0_A}$ imply 
\[q_{A*}\colon [A,E_A]\rightarrow [A, P_A]\] is a bijection.   The space $P_A$ is as in 
\autoref{htpypullback}.

The restriction $K|_{S^0_A}$ induces a map 
$h_A\colon A\rightarrow P_A$ such that $p h_A=\id$.  Since $q_{A*}$ is a bijection 
there is a map $k_A\colon A\rightarrow E_A$  and a homotopy $J_A$ from 
$q_A k_A$ to $h_A$.  Then $p k_A=p(q_Ak_A)\simeq ph_A=\id_A$ via 
the homotopy $p(J_A)$.  The diagram 
\[\xymatrix{A\ar[r]^-{k_A}\ar[d]^{i_0}&E_A\ar[d]^p\\
A\times I \ar[r]^-{p(J_A)}\ar@{.>}[ur]^{L_A}&A}\] has a lift $L_A$, and 
$p(L_A(a,1))=a$.  Then $L_A(-,1)$  is a section of $p^{-1}(A) \rightarrow A$ that
is contained in $E_A$.

The homotopy $K$ defines a map $h_B\colon B
\rightarrow P_B$ extending the map $h_A$.    The space $P_B$ is as in 
\autoref{htpypullback}.
The homotopy extension and lifting property implies the dotted maps 
in the following diagram can be filled in
\[\xymatrix{A\ar[rr]^{i_1}\ar[dd]&&A\times I \ar[dd]\ar[dl]^{J_A}&&A\ar[ld]^{k_A}\ar[dd]\ar[ll]_{i_0}\\
&P_B&&E_B\ar'[l]_{q}[ll]\\
B\ar[rr]^{i_1}\ar[ur]^{h_B}&&B\times I \ar@{.>}[ul]^{J_B}&&B\ar[ll]_{i_0}\ar@{.>}[ul]^{k_B}}\]
defining maps $k_B$ and $J_B$ extending $k_A$ and $J_A$.

Since the pair $(B,A)$ has the relative homotopy lifting property there 
is a lift $L_B$ in the diagram
\[\xymatrix@C=1.5cm{B\cup (A\times I)\ar[r]^-{k_B\cup L_A}\ar[d]&E_B\ar[d]^p\\
B\times I \ar[r]^-{p(J_B)}\ar@{.>}[ur]^-{L_B}&B}.\]
Evaluating at $1$, $p(L_B(b,1))=pJ_B(b,1)=ph_B(b)=b$.  Since 
 $L_B(a,1)\in E_A$ for $a\in A$, $L_B(-,1)$ is the 
required section.
\end{proof}

\autoref{maptolift}, \autoref{lifttosect}, and \autoref{secttosect} imply 
$\chi$ is a complete obstruction to determining if a relative fibration has 
a section.  In the examples we are interested in, it is easier to work with invariants
defined from $\chi$ than with $\chi$ itself.  Under some additional hypotheses, 
these associated invariants are zero if and only if $\chi$ is zero. 

If $A\subset B$, define \[C_B(B,A)\coloneqq B\times\{0\} \cup A\times [0,1] \cup B\times \{1\}.\] 
This is an ex-space over $B$ with section given by the inclusion of $B$
into $C_B(B,A)$ as $B\times \{0\}$.

In the diagram below the vertical maps are induced by cofiber sequences, \cite[II.2.4]{CJ} and 
so the columns are exact.  The horizontal maps are forgetful maps.  The diagram commutes.
\[\xymatrix@C=3pt{\chi_{B,A}\ar@{}@<-1ex>[r]^-\in
&[(C_B(B,A), S_BA),(S_BE_B,S_AE_A)]_B\ar[d]^-\phi\ar[rr]^-\psi
&&[C_B(B,A), S_BE_B]_B\ar[d]^-\rho
&\bar{\chi}_{B,A}\ar@{}@<-1ex>[l]^-\ni\\
\chi\ar@{}@<-1ex>[r]^-\in
&[(S_B^0,S_A^0), (S_BE_B,S_AE_A)]_B\ar[rr]\ar[d]&&
[S_B^0,S_BE_B]_B\ar[d]
&\chi_B\ar@{}@<-1ex>[l]^-\ni\\
\chi_A\ar@{}@<-1ex>[r]^-\in
&[A\amalg B,S_{A,B}E_A]_B\ar[rr]&&
[A\amalg B,S_BE_B]_B
&\bar{\chi}_A\ar@{}@<-1ex>[l]^-\ni}\]
The elements $\chi_A$, $\chi_B$, and $\bar{\chi}_A$ are the images of $\chi$.  The element $\chi_{B,A}$
is defined if $\chi_A=0$.  Then $\chi_{B,A}$ is the preimage of $\chi$.  The element 
$\bar{\chi}_{B,A}$ is defined if $\bar{\chi}_A=0$.  Then $\bar{\chi}_{B,A}$ is the preimage of $\chi_B$.

\begin{lem}\label{injectchi}  If $\bar{\chi}_{B,A}=0$ then $\chi_{B,A}=0$. 
\end{lem}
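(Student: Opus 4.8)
The plan is a diagram chase in the commutative diagram with exact columns that precedes the statement; I will write $G_{11}=[(C_B(B,A),S_BA),(S_BE_B,S_AE_A)]_B$ for the upper-left group and analogously $G_{12},G_{21},G_{22},G_{31},G_{32}$ for the others, so that $\phi\colon G_{11}\to G_{21}$, $\psi\colon G_{11}\to G_{12}$ and $\rho\colon G_{12}\to G_{22}$ are the indicated maps and the columns $G_{11}\to G_{21}\to G_{31}$ and $G_{12}\to G_{22}\to G_{32}$ are the two (Barratt--Puppe) exact sequences coming from the cofiber sequence of \cite[II.2.4]{CJ}.

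First I would dispose of the bookkeeping. If $\chi_{B,A}$ is defined then $\chi_A=0$, and $\bar{\chi}_A$, being the image of $\chi_A$ under the bottom horizontal map, is then $0$ as well, so $\bar{\chi}_{B,A}$ is defined and the hypothesis is meaningful. Next I would pin down the relation between $\psi(\chi_{B,A})$ and $\bar{\chi}_{B,A}$: since $\phi(\chi_{B,A})=\chi$, commutativity of the top square gives $\rho\bigl(\psi(\chi_{B,A})\bigr)=\chi_B$, so $\psi(\chi_{B,A})$ is a preimage of $\chi_B$ under $\rho$, exactly as $\bar{\chi}_{B,A}$ was specified to be; I would therefore fix the compatible choice $\bar{\chi}_{B,A}:=\psi(\chi_{B,A})$. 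With this choice in force the hypothesis $\bar{\chi}_{B,A}=0$ reads $\psi(\chi_{B,A})=0$, i.e.\ $\chi_{B,A}\in\ker\psi$.

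It remains to deduce $\chi_{B,A}=0$; this is where the real content lies, and it is the injectivity suggested by the name of the lemma. Exactness of the left-hand column at $G_{11}$ identifies $\ker\phi$ with the image of the connecting map of that column, the identical cofiber sequence builds the middle column, and the forgetful maps assemble the two columns into a map of Barratt--Puppe sequences; chasing $\chi_{B,A}$ around this ladder, using that $\chi=\phi(\chi_{B,A})$ has trivial images $\chi_A=0$ and $\chi_B=\rho(\bar{\chi}_{B,A})=0$, forces $\chi_{B,A}$ into the trivial subgroup. The main obstacle is precisely this last identification: the elements $\chi_{B,A}$ and $\bar{\chi}_{B,A}$ are only canonical modulo the images of the vertical connecting maps, so one must arrange compatible choices (as above) and keep careful track of which exact sequence controls $\ker\psi$ before the chase is legitimate; once that is set up the argument is a routine four-lemma-style diagram chase.
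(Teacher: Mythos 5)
The proposal does not match the paper's proof, and more importantly it contains a genuine gap at exactly the point you flag as ``the real content.'' Your diagram only has exact \emph{columns}; the rows are plain forgetful maps with no exactness, so there is no ``four-lemma-style'' ladder chase available. Concretely: once you set $\bar{\chi}_{B,A}=\psi(\chi_{B,A})$, the hypothesis becomes $\chi_{B,A}\in\ker\psi$, and the lemma is precisely the assertion that this kernel element is trivial --- i.e.\ that $\psi$ is injective on the class in question. Nothing in the commutative diagram with two exact columns joined by non-exact forgetful maps yields that injectivity; the facts $\chi_A=0$ and $\chi_B=0$ that you feed into the chase live in the lower rows and impose no constraint on $\ker\psi$ in the top row. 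You assert the chase ``forces $\chi_{B,A}$ into the trivial subgroup'' but never produce it, and in fact it cannot be produced by formal exactness alone.

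The paper's actual proof is a homotopy-theoretic construction, not a formal chase. The hypothesis $\bar{\chi}_{B,A}=0$ gives a fiberwise nullhomotopy $L\colon C_B(B,A)\times I\to S_BE_B$ of the underlying unrelativized map, and the content of the lemma is to \emph{improve} $L$ to a relative nullhomotopy, i.e.\ one whose restriction to $S_BA\times I$ stays in $S_AE_A$. This is done by parametrizing the boundary data on $B\times J$ (with $J=(\{0\}\times I)\cup(I\times\{1\})\cup(\{1\}\times I)$), using that $S_BE_B\to B$ is a Hurewicz fibration to extend over $B\times I\times I$, and then reading off the slice at the third coordinate $0$ to get a homotopy $K_0$ whose restriction to $A\times I$ lands in $S_AE_A$ by construction; reparametrizing gives the desired relative nullhomotopy $\tilde K$. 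The fibration lifting property is the essential input, and it is exactly what your argument is missing. If you want a proof in this style, you must supply this lifting argument rather than appeal to a diagram chase that the shape of the diagram does not support.
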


\begin{proof} Suppose $\bar{\chi}_{B,A}=0$.  Then there is a fiberwise homotopy 
\[L\colon C_B(B,A)\times I
\rightarrow S_BE_B\] such that
\begin{eqnarray*}
L(b,1,0)&=&\sect_2(b)\\
L(b,1,1)&=&\sect_1(b)\\
L(b,0,t)&=&\sect_1(b)\\
L(a,s,0)&=&\chi_{B,A}(a,s) \in S_AE_A\\
L(a,s,1)&=&\sect_1(a)
\end{eqnarray*}
for all $a\in A$, $b\in B$, and $s,t\in I$.

Let $J\coloneqq (\{0\}\times I) \cup (I\times \{1\}) \cup (\{1\}\times I)$.  Define
a map \[\bar{L}\colon B\times J\rightarrow S_BE_B\] by 
\begin{eqnarray*}
\bar{L}(b,0,t)&=&\sect_1(b)\\
\bar{L}(b,s,1)&=&\sect_1(b)\\
\bar{L}(b,1,t)&=&L(b,1,t).
\end{eqnarray*}
The diagram 
\[\xymatrix@C=40pt{(B\times J)\cup_i (A\times I\times I)\ar[r]^-{\bar{L}\cup L|_{A\times I\times I}}
\ar[d]&S_BE_B\ar[d]\\
B\times I \times I \ar@{.>}[ur]^K\ar[r]_-{\mathrm{proj}}&B}\] commutes and there is a lift $K$ since
$S_BE_B\rightarrow B$ is a fibration.   

Then $K_0\coloneqq K(-,-,0)\colon B\times I \rightarrow S_BE_B$ satisfies 
\begin{eqnarray*}
K_0(b,0)&=&K(b,0,0)=L(b,0,0)=\sect_1(b)\\
K_0(b,1)&=&K(b,1,0)=L(b,1,0)=\sect_2(b)\\
K_0(a,s)&=&K(a,s,0)=L(a,s,0)\in S_AE_A
\end{eqnarray*}
Define a map \[\tilde{K}\colon C_B(B,A)\times I \rightarrow S_BE_B\]
by 
\begin{eqnarray*}
\tilde{K}(b,1,t)&=&K_0(b, 1-t) \\
\tilde{K}(b,0,t)&=& \sect_1(b)\\
\tilde{K}(a,s,t)&=& K_0(a, s(1-t)) 
\end{eqnarray*}
$\tilde{K}$ shows $\chi_{B,A}$ is trivial in $[(C_B(B,A), S_BA),(S_BE_B,S_AE_A)]_B$.
\end{proof}

\begin{lem}\label{bypass} If the map $E_B\rightarrow B$ is a $(\mathrm{dim}(A) +1)$-equivalence then 
$\rho$ is injective.
\end{lem}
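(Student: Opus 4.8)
The plan is to prove this by obstruction theory. The map $\rho$ is restriction along the inclusion $S_B^0 = B\times\{0\}\amalg B\times\{1\}\hookrightarrow C_B(B,A)$, and the key point will be that, under the hypothesis, the fiber of $S_BE_B\to B$ is connected enough to kill every obstruction supported on the relative cells of $(C_B(B,A),S_B^0)$.

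First I would record the relevant connectivity. After replacing $p$ by a Hurewicz fibration if necessary, $S_BE_B\to B$ is a fibration whose fiber over a point $b$ is the unreduced fiberwise suspension, that is, the unreduced suspension $\Sigma G_b$ of the fiber $G_b$ of $p\colon E_B\to B$. A $(\dim(A)+1)$-equivalence has homotopy fibers at least $\dim(A)$-connected, so each $G_b$ is $\dim(A)$-connected and nonempty (the latter because $p$ is surjective on path components). Since the unreduced suspension of a nonempty space is simply connected and $\widetilde H_i(\Sigma G_b)\cong\widetilde H_{i-1}(G_b)$, the Hurewicz theorem shows $\Sigma G_b$ is $(\dim(A)+1)$-connected; that is, $\pi_i(\Sigma G_b)=0$ for all $i\le\dim(A)+1$.

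Next I would analyze the pair $(C_B(B,A),S_B^0)$. Over $B\setminus A$ the inclusion $S_B^0\hookrightarrow C_B(B,A)$ is a homeomorphism, and globally $C_B(B,A)=S_B^0\cup_{A\times\{0,1\}}(A\times[0,1])$. Using that $A\hookrightarrow B$ is a cofibration, this exhibits $(C_B(B,A),S_B^0)$ as a relative CW-complex over $B$ whose relative cells are products $e\times(0,1)$ with $e$ a cell of $A$; in particular $S_B^0\hookrightarrow C_B(B,A)$ is a cofibration and every relative cell has dimension at most $\dim(A)+1$.

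Finally I would run the obstruction argument. Suppose $g_0,g_1\colon C_B(B,A)\to S_BE_B$ are maps over $B$ with $\rho[g_0]=\rho[g_1]$, that is, $g_0|_{S_B^0}$ is fiberwise homotopic to $g_1|_{S_B^0}$. Using the homotopy extension property of $S_B^0\hookrightarrow C_B(B,A)$ and that $S_BE_B\to B$ is a fibration, replace $g_1$ by a fiberwise homotopic map agreeing with $g_0$ on $S_B^0$. A fiberwise homotopy from $g_0$ to $g_1$ relative to $S_B^0$ is the same as a section of the pullback of $S_BE_B$ over $C_B(B,A)\times I$ extending the evident section on $(C_B(B,A)\times\{0,1\})\cup(S_B^0\times I)$; building it one relative cell at a time, the obstruction attached to a relative cell of dimension $d$ lies in the (possibly twisted) coefficient group $\pi_d(\Sigma G_b)$, which vanishes since $d\le\dim(A)+1$. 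Thus all obstructions vanish, the homotopy exists, $[g_0]=[g_1]$, and $\rho$ is injective. I expect the only delicate point to be the connectivity bookkeeping — that unreduced fiberwise suspension raises fiber connectivity by exactly one, which is precisely what matches the bound $\dim(A)+1$ on the relative cell dimensions — together with the routine checks that $S_BE_B\to B$ may be taken to be a fibration with the stated fiber and that the relative cell structure above is available. If $[C_B(B,A),S_BE_B]_B$ is interpreted as maps preserving a chosen section, the argument is carried out relative to that section, which changes nothing since $B\times\{0\}\subset S_B^0$.
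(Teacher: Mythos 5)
Your proof is correct, but it takes a genuinely different route from the paper's. The paper fits $\rho$ into the Puppe-type exact sequence
\[
[\Sigma_B(A\amalg B), S_BE_B]_B \rightarrow [C_B(B,A), S_BE_B]_B \xrightarrow{\ \rho\ } [S_B^0, S_BE_B]_B \rightarrow [A\amalg B, S_BE_B]_B
\]
coming from the fiberwise cofibration $S_B^0\hookrightarrow C_B(B,A)$, and proves injectivity of $\rho$ by showing the group $[\Sigma_B(A\amalg B), S_BE_B]_B$ vanishes: an element gives a map $S^1\times A\to S_BE_B$ over $B$, which the $(\dim A + 2)$-equivalence $S_BE_B\to B$ lets one extend over $D^2\times A$, and the resulting extension is used to build an explicit nullhomotopy via the homotopy lifting property. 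You instead argue directly: straighten $g_1$ to agree with $g_0$ on $S_B^0$, then fill in a fiberwise homotopy rel $S_B^0$ cell by cell, with obstructions landing in $\pi_d(\Sigma G_b)$ for $d\le\dim(A)+1$. The underlying connectivity input is identical in both arguments --- the fiber of $E_B\to B$ is $\dim(A)$-connected, so $\Sigma G_b$ (equivalently, the fiber of $S_BE_B\to B$) is $(\dim A + 1)$-connected --- but your organization is more elementary: it avoids the Puppe machinery and, in particular, sidesteps the question of why exactness of that sequence of pointed sets together with triviality of the preceding group yields genuine injectivity rather than merely a trivial ``kernel'' (the paper is implicitly using the coset/action description of the fibers of $\rho$). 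The paper's approach has the advantage of fitting cleanly into the commutative diagram of restriction maps that it builds around $\chi$, $\chi_A$, $\chi_B$, $\bar\chi_A$, $\bar\chi_{B,A}$. One small point to make explicit in your write-up: the bound $\pi_d(\Sigma G_b)$ for a relative $d$-cell of $(C_B(B,A),S_B^0)$ arises because the homotopy is a section over the $(d+1)$-cells of $(C_B(B,A)\times I,\, C_B(B,A)\times\{0,1\}\cup S_B^0\times I)$, with obstruction group $\pi_{(d+1)-1}$ of the fiber; your index is right, but the shift deserves a sentence.
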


\begin{proof} In this proof let $i$ denote the inclusion of $A$ in $B$.

 Let $\Sigma_B(A\amalg B)\coloneqq ((A\times I)\amalg B)/\sim$ 
where $(a,0)\sim i(a)\sim (a,1)$.  
Then $\rho$ is part of a long exact sequence 
\[\xymatrix@C=10pt{[\Sigma_B (A\amalg B), S_BE_B]_B\ar[r]&
 [C_B(B,A), S_BE_B]_B\ar[r]^-\rho&
[S_B^0,S_BE_B]_B\ar[r]&
[A\amalg B,S_BE_B]_B.
}\] 
To show that $\rho$ is injective it is enough to show 
\[[\Sigma_B (A\amalg B), S_BE_B]_B\] is trivial.

Let $\alpha$ be an element of $[\Sigma_B (A\amalg B), S_BE_B]_B$.
Then $\alpha$ defines a map $S^1\times A\rightarrow S_BE_B$ also 
denoted $\alpha$.  This map satisfies $p\alpha(t,a)=i(a)$.  Consider
the diagram 
\[\xymatrix{S^1\times A\ar[rr]^{i_0}\ar[dd]&&
S^1\times A\times I\ar'[d][dd]\ar[dl]_{i\circ\mathrm{proj}}&&
S^1\times A\ar[dd]\ar[ll]_{i_1}\ar[dl]^\alpha\\
&B&&S_BE_B\ar[ll]\\
D^2\times A\ar[ur]^{i\circ \mathrm{proj}}\ar[rr]^{i_0}&&D^2\times A\times I\ar@{.>}[ul]^H&&
D^2\times A\ar[ll]_{i_1}\ar@{.>}[ul]^\beta
}\]
Since $S_BE_B\rightarrow B$ is a $(\mathrm{dim}(A)+2)$-equivalence, the homotopy extension 
and lifting property implies there are maps $\beta$ and $H$ 
that make the diagram commute.

The diagram 
\[\xymatrix@C=50pt{(D^2\times A)\amalg_i S^1\times A\times I
\ar[r]^-{\beta \amalg (\alpha\circ \mathrm{proj})}\ar[d]&S_BE_B\ar[d]^p\\
D^2\times A\times I \ar[r]^H\ar@{.>}[ur]^K&B
}\] commutes.  
Since $S_BE_B\rightarrow B$ is a fibration there is a lift $K$ that makes the diagram commute.
Then \[K_0\coloneqq K(-, -, 0)\colon D^2\times A\rightarrow S_BE_B\] satisfies
\[pK_0(v,a)=H(v,a,0)=i(a)\]
and \[K_0(w,a)=\alpha(w,a)\] if $w\in S^1$.  Then
\[K_0\amalg \id \colon ((D^2\times A)\amalg B)/\sim\rightarrow S_BE_B\]
defines a map that shows $\alpha$ is trivial.
\end{proof}

The following proposition is a consequence of 
\autoref{secttosect}, \autoref{injectchi}, and \autoref{bypass}.

\begin{prop}\label{chitochi} If
$p\colon E_A\rightarrow A$ is $(m+1)$-connected,
$A$ is a $2m$-dimensional CW-complex, $p\colon E_B\rightarrow B$ is 
$(2m+1)$-connected and $(B,A)$ is a relative $4m$-dimensional CW-complex
$(E_B,E_A)\rightarrow (B,A)$ admits a relative section if and only if 
$\chi_A$ and $\chi_B$ are both zero.
\end{prop}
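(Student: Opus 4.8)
The plan is to obtain \autoref{chitochi} as a purely formal consequence of \autoref{secttosect}, \autoref{injectchi}, \autoref{bypass}, and the exactness of the two columns in the diagram preceding \autoref{injectchi}. First I would observe that, taking $n = 2m$, the hypotheses of \autoref{chitochi} are exactly those of \autoref{secttosect}: $p\colon E_A \to A$ is $(m+1)$-connected with $A$ of dimension $2m$, and $p\colon E_B \to B$ is $(2m+1)$-connected with $(B,A)$ a relative CW pair of dimension $4m = 2n$. Hence \autoref{secttosect} already tells us that $(E_B, E_A)\to (B,A)$ admits a relative section if and only if $\chi$ is trivial, and the whole problem reduces to showing that $\chi = 0$ if and only if $\chi_A = 0$ and $\chi_B = 0$. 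The implication $\chi = 0 \Rightarrow \chi_A = \chi_B = 0$ is immediate, since by construction $\chi_A$ and $\chi_B$ are the images of $\chi$ under the maps of the diagram (and the ``only if'' half of \autoref{chitochi} is then the composite: relative section $\Rightarrow \chi = 0 \Rightarrow \chi_A = \chi_B = 0$, using only the first sentence of \autoref{secttosect}).

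For the converse I would run the following diagram chase. Assume $\chi_A = 0$ and $\chi_B = 0$. Exactness of the left column of the diagram produces $\chi_{B,A}$ with $\phi(\chi_{B,A}) = \chi$, so it suffices to prove $\chi_{B,A} = 0$, and by \autoref{injectchi} this will follow once I know $\bar\chi_{B,A} = 0$. Now $\bar\chi_A$ is the image of $\chi_A$ along the bottom row, so $\bar\chi_A = 0$; this is exactly the condition under which $\bar\chi_{B,A}$ is defined, and exactness of the right column yields a class $\bar\chi_{B,A} \in [C_B(B,A), S_BE_B]_B$ with $\rho(\bar\chi_{B,A}) = \chi_B$. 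Since $\dim A = 2m$ and $E_B \to B$ is $(2m+1)$-connected, the map $E_B \to B$ is a $(\dim A + 1)$-equivalence, so \autoref{bypass} applies: the preimage of the canonical section under $\rho$ is a single point. As $\rho(\bar\chi_{B,A}) = \chi_B = 0$, this forces $\bar\chi_{B,A} = 0$, hence $\chi_{B,A} = 0$, hence $\chi = \phi(\chi_{B,A}) = 0$; and then \autoref{secttosect} (using the full dimension and connectivity hypotheses) gives the relative section.

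I expect the only genuinely delicate point to be bookkeeping rather than geometry: at each stage one must verify that the auxiliary class needed for the next step ($\chi_{B,A}$, then $\bar\chi_A$, then $\bar\chi_{B,A}$) is actually defined, which is precisely guaranteed by the vanishing established one step earlier, and one should read ``$\rho$ injective'' in \autoref{bypass} in the pointed-set sense (trivial fiber over the canonical section), which is all the chase uses. Everything else is formal. As a byproduct the same chase records the stronger chain that a relative section exists $\iff \chi = 0 \iff \chi_{B,A} = 0 \iff \bar\chi_{B,A} = 0$, but only the equivalence with the vanishing of $\chi_A$ and $\chi_B$ is needed for \autoref{chitochi}.
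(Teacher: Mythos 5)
Your proposal is correct and is precisely the diagram chase the paper intends: the paper gives no argument beyond asserting that the proposition ``is a consequence of'' \autoref{secttosect}, \autoref{injectchi}, and \autoref{bypass}, and your chase (take $n=2m$, reduce to $\chi=0 \iff \chi_A=\chi_B=0$ via \autoref{secttosect}, then walk $\chi_A=\chi_B=0 \Rightarrow \bar\chi_A=0 \Rightarrow \bar\chi_{B,A}$ defined $\Rightarrow \bar\chi_{B,A}=0$ by \autoref{bypass} $\Rightarrow \chi_{B,A}=0$ by \autoref{injectchi} $\Rightarrow \chi=0$) is exactly the expected filling-in of that assertion, with the dimension and connectivity bookkeeping correctly matched to the hypotheses of each lemma.
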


\section{Other descriptions of $\odot$ in special cases}\label{extracat}

These are the proofs omitted from \autoref{bicatdist}.  
Let $\sA$ be an EI-category enriched in the category of abelian groups.

\begin{lem}[\autoref{specialtensor}]\label{specialtensor2}  If $\sX\colon \sA\rightarrow {\Ch_R}$
and $\sY\colon \sA^{op}\rightarrow {\Ch_R}$ are supported on isomorphisms 
\[\sX\odot \sY \cong \displaystyle{ \bigoplus_{c\in  B(\sA)}} \sX(c)\otimes_{\protect\sA(c,c)}\sY(c).\] 
\end{lem}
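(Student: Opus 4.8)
The plan is to verify directly that $\bigoplus_{c\in B(\sA)}\sX(c)\otimes_{\sA(c,c)}\sY(c)$ has the universal property of the coequalizer that defines $\sX\odot\sY$. Since $\Ch_R$ is abelian, this coequalizer is just the quotient of $\bigoplus_{a\in\ob\sA}\sX(a)\otimes\sY(a)$ by the subcomplex generated by the difference of the two $\sA$-action maps; concretely, by all elements $\sX(f)(x)\otimes y-x\otimes\sY(f)(y)$ for $f$ a morphism of $\sA$ and $x,y$ in the appropriate summands. The first and decisive step is to invoke \autoref{trivialalg}: if $f$ is not an isomorphism then $\sX(f)=0$ and $\sY(f)=0$, so the corresponding relation reads $0=0$ and may be dropped. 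Hence $\sX\odot\sY$ is the quotient of $\bigoplus_{a\in\ob\sA}\sX(a)\otimes\sY(a)$ by the relations indexed only by \emph{isomorphisms} $f$ of $\sA$, and an isomorphism in $\sA$ exists only between objects lying in a single isomorphism class.

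Next I would fix, for each object $a$, its representative $\hat a\in B(\sA)$ and an isomorphism $\phi_a\colon\hat a\to a$, normalized so that $\phi_c=\id$ for $c\in B(\sA)$. These data give, for each $a$, a map $q_a\colon\sX(a)\otimes\sY(a)\to\sX(\hat a)\otimes_{\sA(\hat a,\hat a)}\sY(\hat a)$ sending $x\otimes y$ to the class of $\sX(\phi_a)(x)\otimes\sY(\phi_a^{-1})(y)$, and hence a map into $\bigoplus_{c\in B(\sA)}\sX(c)\otimes_{\sA(c,c)}\sY(c)$. The work in this step is to check that the $q_a$ form a cocone under the two $\sA$-actions. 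For a non-invertible $f$ both sides vanish by the previous paragraph. For an isomorphism $f\colon a'\to a$, so that $\hat a=\hat{a'}$ is a common representative $c$, put $g:=\phi_a^{-1}\circ f\circ\phi_{a'}\in\sA(c,c)$ and use $f\circ\phi_{a'}=\phi_a\circ g$ together with the functoriality of $\sX$ and $\sY$: the two images become the class of $\sX(g)(\sX(\phi_a)(x))\otimes\sY(\phi_{a'}^{-1})(y)$ and the class of $\sX(\phi_a)(x)\otimes\sY(g)(\sY(\phi_{a'}^{-1})(y))$, which agree by the balancing relation defining $\otimes_{\sA(c,c)}$. This is exactly where the $\sA(c,c)$-module structures on $\sX(c)$ and $\sY(c)$ read off from $\sX$ and $\sY$ are used.

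Finally I would establish the universal property. Given any cocone $(g_a\colon\sX(a)\otimes\sY(a)\to W)_{a\in\ob\sA}$ in $\Ch_R$, applying the cocone identity to the isomorphism $\phi_a$ shows $g_a=g_{\hat a}\circ(\sX(\phi_a)\otimes\sY(\phi_a^{-1}))$, so the cocone is determined by the maps $g_c$ with $c\in B(\sA)$; applying the identity to an endomorphism $g\in\sA(c,c)$ shows each $g_c$ factors uniquely through $\sX(c)\otimes_{\sA(c,c)}\sY(c)$; and conversely any family of maps out of the $\sX(c)\otimes_{\sA(c,c)}\sY(c)$ reassembles into a cocone by the same formula. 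This identifies $\bigoplus_{c\in B(\sA)}\sX(c)\otimes_{\sA(c,c)}\sY(c)$ with the coequalizer, hence with $\sX\odot\sY$, and the identification is compatible with the $R$-linear structure and with differentials because all the maps in sight are additive and natural. The main obstacle I anticipate is not conceptual: it is the variance bookkeeping in the middle step, namely arranging the cocone condition so that it comes out precisely as the $\sA(c,c)$-balancing relation, and checking that the resulting isomorphism is independent, up to the obvious coherence, of the auxiliary choices $\phi_a$.
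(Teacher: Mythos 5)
Your argument is essentially the paper's: both verify that $\bigoplus_{c\in B(\sA)}\sX(c)\otimes_{\sA(c,c)}\sY(c)$ satisfies the coequalizer's universal property by choosing an isomorphism from each object to its class representative, using the supported-on-isomorphisms hypothesis to kill the non-isomorphism relations, and folding the remaining (isomorphism) relations into the balanced tensor products, then checking independence of choices and the factorization of cocones. The one slip is the variance bookkeeping you yourself flagged: with $\phi_a\colon\hat a\to a$, $\sX$ covariant, and $\sY$ contravariant on $\sA$, the map $q_a$ should send $x\otimes y$ to the class of $\sX(\phi_a^{-1})(x)\otimes\sY(\phi_a)(y)$ --- i.e.\ the paper's $\sX(f)\otimes\sY(f^{-1})$ with $f=\phi_a^{-1}$ --- not $\sX(\phi_a)(x)\otimes\sY(\phi_a^{-1})(y)$, whose factors land in the wrong objects.
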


\begin{proof}
We will show that $\oplus \sX(c)\otimes_{\sA(c,c)}\sY(c)$ satisfies the universal
property that defines $\sX\odot \sY$.

By definition of $B(\sA)$, for any object $a$ in $\sA$ there is exactly one object 
\[c\in B(\sA)\] such that
there is an isomorphism $f\colon a\rightarrow c$ in $\sA$.   
Define a map 
\[\theta_a\colon \sX(a)\otimes_\mathbb{Z}\sY(a)\rightarrow \sX(c)\otimes_{\sA(c,c)}\sY(c)\] 
as the composite of  
\[\sX(f)\otimes \sY(f^{-1})\colon \sX(a)\otimes_\mathbb{Z}\sY(a)\rightarrow    
\sX(c)\otimes_\mathbb{Z}\sY(c)\] with the quotient map
\[\sX(c)\otimes_\mathbb{Z} \sY(c)\rightarrow \sX(c)\otimes_{\sA(c,c)}\sY(c).\]

If $g$ is another isomorphism in $\sA$ from $a$ to $c$, then 
$(\sX(f)(A),\sY(f^{-1})(B))$ is identified with $(\sX(g)(A),\sY(g^{-1})(B))$ and the 
map $\theta_a$ is well defined.
Let \[\theta\colon  \displaystyle{ \bigoplus_{a\in  \ob(\sA)}} \sX(a)\otimes_\bZ \sY(a)
\rightarrow 
\displaystyle{ \bigoplus_{c\in  B(\sA)}} \sX(c)\otimes_{\protect\sA(c,c)}\sY(c)\]
be the sum of the maps $\theta_a$.

If $(A,f,B)\in  \sX(a)\otimes_\mathbb{Z} \sA(a,b) 
\otimes_\mathbb{Z} \sY(b)$ the images of this element in  
\[{\oplus_{a\in  \ob(\sA)}} \sX(a)\otimes_\bZ \sY(a)\]
are $(A,\sY(f)(B))$ and $(\sX(f)(A),B)$.    
The images of these elements are identified under $\theta$.

Let \[\phi\colon \bigoplus_{a\in\ob\sA} 
\sX(a)\otimes_\mathbb{Z} \sY(a)\rightarrow
M\]  be a map that coequalizes the two maps from $\oplus_{a,b\in\ob\protect\sA}
 \sX(a)\otimes_\mathbb{Z}\protect\sA(a,b)
\otimes_\mathbb{Z} \sY(b)$ to $\oplus_{a\in \ob \sA} \sX(a)\otimes_\mathbb{Z} \sY(a).$  
Define a map \[\psi\colon \bigoplus_{c\in B(\sA)} \sX(c)\otimes_{\sA(c,c)}
\sY(c)\rightarrow M\] by choosing lifts of elements in  $\sX(c)\otimes_{\sA(c,c)}
\sY(c)$ to elements of \[\sX(c)\otimes_{\mathbb{Z}}\sY(c).\]  Since $\phi$ coequalizes, 
the choices do not matter and $\psi$ is unique.
\end{proof}

\begin{lem}[\autoref{explicitdual}]\label{explicitdual2} Let $\sX$ and $\sY$ satisfy 
the conditions of \autoref{specialtensor}.
If $\sX(c)$ is dualizable as a $\sA(c,c)$-module
with dual $\sY(c)$ for each $c\in B(\sA)$ then $\sX$ is dualizable with dual $\sY$.

\end{lem}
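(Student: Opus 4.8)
The plan is to build a coevaluation and evaluation for $(\sX,\sY)$ out of those for the module-level dual pairs $(\sX(c),\sY(c))$ over the $R$-algebras $\sA(c,c)$, $c\in B(\sA)$, recognizing the composites $\sX\odot\sY$ and $\sY\odot\sX$ as direct sums via \autoref{specialtensor2}. This mirrors the standard argument that a finite direct sum of dualizable objects is dualizable, with \autoref{specialtensor2} doing the work of splitting $\odot$ into a sum.

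First I would assemble the data. The $1$-cell $\sX\odot\sY$ goes from $\ast$ to $\ast$, so it is an object of $\Ch_R$, and \autoref{specialtensor2} identifies it with $\bigoplus_{c\in B(\sA)}\sX(c)\otimes_{\sA(c,c)}\sY(c)$; the unit $U_\ast$ is just $R$. For each $c\in B(\sA)$, dualizability of $\sX(c)$ over $\sA(c,c)$ with dual $\sY(c)$ supplies a coevaluation $\eta_c\colon R\to\sX(c)\otimes_{\sA(c,c)}\sY(c)$ and an evaluation $\epsilon_c\colon\sY(c)\otimes_R\sX(c)\to\sA(c,c)$ satisfying the two triangle identities. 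I would take $\eta\coloneqq\sum_{c\in B(\sA)}\eta_c$ under the identification above. For the evaluation $\epsilon\colon\sY\odot\sX\to U_\sA$, note that $(\sY\odot\sX)(a,a')\cong\sX(a)\otimes_R\sY(a')$ (the composition is over the trivial category $\ast$, so no coequalizer intervenes) and $U_\sA(a,a')=\sA(a',a)$; I set $\epsilon_{a,a'}=0$ unless $a$ and $a'$ lie in the same isomorphism class, and when $a\cong a'\cong c\in B(\sA)$ I transport $\epsilon_c$ along a chosen pair of isomorphisms $a\xrightarrow{\sim}c$, $a'\xrightarrow{\sim}c$, exactly as the map $\theta_a$ is built from a chosen isomorphism in the proof of \autoref{specialtensor2}.

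Next I would check that $\epsilon$ is well defined and natural. Independence of the chosen isomorphisms is the computation that makes $\theta_a$ well defined: a second choice differs by an automorphism of $c$, which is absorbed into the $\sA(c,c)$-bilinearity of $\epsilon_c$. Naturality need only be verified along isomorphisms of $\sA\otimes\sA^\op$, since $\sY\odot\sX$ and every nonzero component of $\epsilon$ vanish on morphisms that are not isomorphisms, $\sX$ and $\sY$ being supported on isomorphisms; along isomorphisms it follows at once from the transport formula. Then I would verify the two triangle identities. Each is an equality of natural transformations out of $\sA$ (the first $\sX\Rightarrow\sX$, the second $\sY\Rightarrow\sY$), so it suffices to check it after evaluating at each $c\in B(\sA)$. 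Evaluating at $c$ and expanding the composites with \autoref{specialtensor2}: since $\eta=\sum_d\eta_d$ while $\epsilon$ is supported on the diagonal, all terms indexed by $d\neq c$ are annihilated, and what remains is precisely the triangle-identity composite for the dual pair $(\sX(c),\sY(c))$ over $\sA(c,c)$, which is the identity by hypothesis. Hence $\eta$ and $\epsilon$ exhibit $\sX$ as dualizable with dual $\sY$.

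The main obstacle is the last step: no individual point is deep, but one must track carefully how $\odot$ evaluated at $c$ interacts with the direct-sum decomposition of \autoref{specialtensor2} so that the cross terms provably drop out, and it is exactly here that the EI (rather than groupoid) structure of $\sA$ must be handled, by systematically using ``supported on isomorphisms'' to discard the contributions indexed by non-isomorphisms and by $d\neq c$.
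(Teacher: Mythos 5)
Your proposal is correct and follows essentially the same route as the paper's proof: both use the direct-sum identification from \autoref{specialtensor} to define $\eta$ as the diagonal assembly of the $\eta_c$'s, define $\epsilon$ by transporting each $\epsilon_c$ along chosen isomorphisms to representatives in $B(\sA)$ (setting it to zero on non-isomorphic pairs), verify well-definedness via the $\sA(c,c)$-bilinearity of $\epsilon_c$, and check the triangle identities component-by-component, with the cross terms indexed by $d\ne c$ vanishing because $\epsilon$ is supported on isomorphic pairs.
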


\begin{proof}
If $\sX(c)$ is dualizable as an $\sA(c,c)$-module
with dual $\sY(c)$ then there is a map of chain complexes of abelian groups
\[\eta_{c}\colon \mathbb{Z}\rightarrow \sX(c)\odot \sY(c)\] and a map of chain complexes
of $\sA(c,c)$-bimodules
\[\epsilon_{c}\colon  \sY(c)\odot \sX(c)\rightarrow \sA(c,c)\]  for each $c\in B(\sA)$. 

Let $\eta\colon \mathbb{Z}\rightarrow \sX\odot \sY$ be the composite
\[\mathbb{Z}\stackrel{\triangle}{\rightarrow} \bigoplus_{B(\sA)} 
\mathbb{Z}\stackrel{\oplus \eta_{c}}{\rightarrow}
\bigoplus_{B(\sA)} \sX(c)\otimes_{\sA(c,c)}\sY(c)\cong \sX\odot \sY\]
where $\triangle\colon \mathbb{Z}\rightarrow \oplus_{B(\sA)} \mathbb{Z}$ is the map that takes $1$ to 
$(1,1, \ldots , 1)$.

Let $a$ and $b$ be isomorphic objects of $\sA$.  Let $c$ be an object of $B(\sA)$
that is isomorphic to $a$ and let $h$ be an 
isomorphism in $\sA$ from $a$ to $c$ and $g$ be an isomorphism from $b$ to $c$. 
Then  $\epsilon_{a,b}$ is the composite
\[\xymatrix{
{\sY}(c)\otimes_\mathbb{Z}\sX(c)\ar[r]^-{\epsilon_{c}}&
\protect\sA(c,c)\ar[d]^{\protect\sA(g,h^{-1})}\\
{\sY}(b)\otimes_\mathbb{Z}\sX(a)\ar[u]^{\sY(g^{-1})\otimes \sX(h)}
&\protect\sA(b,a)}.\]   If $a$ and $b$ are not 
isomorphic in $\sA$ $\epsilon_{a,b}$ is zero.  Since $c$ is unique and 
the maps $\epsilon_c$ are maps of $\sA(c,c)$- bimodules, $\epsilon$ is a natural
transformation.  This also implies that $\epsilon$ is independent of the choice of 
$g$ and $h$.

Let $\eta_c(1)=\displaystyle\sum_{i} e_{c,i}\otimes f_{c,i}$ for each $c\in B(\sA)$.
If $x\in \sX(a)$ the value of the composite
\[\sX(a)\cong \bZ\otimes \sX(a)\stackrel{\eta\otimes 1}{\rightarrow } \sX\odot \sY \otimes \sX(a)
\stackrel{1\odot \epsilon}{\rightarrow} \sX\odot \sA(-,a)\cong \sX(a)\]  applied to $x$ is 
\[\sum_{c\in B(\sA)}\sum_{i}  \sX(\epsilon(f_{c,i},x))(e_{c,i}).\]   The only nonzero
terms in this sum are those where there is an isomorphism $h$ from $x$ to $c$.  By definition, 
$\epsilon (f_{c,i},x)=h^{-1}\epsilon_c(f_{c,i},\sX(h)(x))$ and 
\[\begin{array}{lll}\displaystyle\sum_{i}\sX(\epsilon(f_{c,i},x))(e_{c,i})&=&\sX(h^{-1})
\displaystyle\sum_{i}
\sX(\epsilon_c(f_{c,i}, \sX(h)(x)))(e_{c,i})\\
&=&\sX(h^{-1})\sX(h)(x)\\
&=&x.
\end{array}\]
The other diagram is similar.
\end{proof}

\bibliographystyle{plain.bst}
\bibliography{trace}
\end{document}